% LaTeX Template For MATH 490 @ VCU
\documentclass{article}

\usepackage{hyperref} 
\usepackage{amsmath}
\usepackage{amsthm}
\usepackage{amssymb}
\usepackage{cite}
\usepackage{amsmath,amsthm}
\usepackage{amsfonts}
\usepackage{amssymb}
\usepackage{mathtools}
\usepackage{tikz}
\usepackage{enumitem}
\usepackage{lipsum}
\newcommand\blfootnote[1]{%
  \begingroup
  \renewcommand\thefootnote{}\footnote{#1}%
  \addtocounter{footnote}{-1}%
  \endgroup
}

\usepackage{import}
\newtheorem{theorem}{Theorem}[section]

\newtheorem{definition}[theorem]{Definition}

\newtheorem{conjecture}[theorem]{Conjecture}
\newtheorem{corollary}[theorem]{Corollary}
\newtheorem{proposition}[theorem]{Proposition}
\newtheorem{lemma}[theorem]{Lemma}
\newtheorem{problem}[theorem]{Problem}
\newtheorem{claim}[theorem]{Claim}

%%%%%%%%%% EXACT 1in MARGINS + DOUBEL SPACED %%%%%%%
% NOTE IF YOU USE 1IN MARGINS CHANGE THE FONT 	  %%
% SIZE TO 12PT IN THE FIRST LINE OF THIS DOCUMENT %%
%\linespread{2}									  %%
%\setlength{\textwidth}{6.5in}     				  %%
%\setlength{\oddsidemargin}{0in}   				  %% 
%\setlength{\evensidemargin}{0in}  				  %%
%\setlength{\textheight}{8.5in}    				  %%
%\setlength{\topmargin}{0in}      				  %%
%\setlength{\headheight}{0in}     				  %%
%\setlength{\headsep}{0in}       				  %%
%\setlength{\footskip}{.5in}    				  %%
%%%%%%%%%%%%%%%%%%%%%%%%%%%%%%%%%%%%%%%%%%%%%%%%%%%%
\usepackage{lineno}
%\linenumbers

\begin{document}

\title{\bf A Proof of the Alternate Thomass\'e Conjecture for Countable $N$-Free Posets \blfootnote{2020 {\em Mathematics Subject Classification:} Combinatorics of partially ordered sets (06A07). \\ {\em Key words:} siblings, posets, decomposition trees. \\ This paper is a project as part of author's thesis under supervision of Dr. Claude Laflamme and Dr. Robert Woodrow at the Department of Mathematics and Statistics, University of Calgary, Calgary, AB, Canada (2017-2022).}}

\author{Davoud Abdi} 

\maketitle              % typeset the title of the contribution

% You don't need an abstract or keywords for an article review
\begin{abstract}
An $N$-free poset is a poset whose comparability graph does not embed an induced path with four vertices. 
We use the well-quasi-order property of the class of countable $N$-free posets and some labelled ordered trees to show that a countable $N$-free poset has one or infinitely many siblings, up to isomorphism. This, partially proves a conjecture stated by Thomass\'e for this class.
\end{abstract}

% TO MAKE A TITLE PAGE USE THE FOLLOWING COMMAND HERE.
% \newpage

%%%%%%%%% Section 1.1

\section{Introduction} 

Two structures $R$ and $S$ are called {\em equimorphic}, denoted by $R\approx S$, when each embeds in the other; we may also say that one is a {\em sibling} of the other. Equimorphic finite structures are necessarily isomorphic, but this is no longer the case for infinite structures. For instance, the rational numbers, considered as a linear order, has continuum many siblings, up to isomorphism. The number of isomorphism classes of siblings of a structure $S$ is called the {\em sibling number} of $S$ and we denote it by $Sib(S)$. 
A {\em relation} $R$ is a pair $(V,E)$ where $V$ is a non-empty set, called the {\em domain} of $R$, and $E\subseteq V^n$ for some positive integer $n$; the integer $n$ is called the arity of $E$ and $E$ is called an $n$-ary relational symbol. A {\em binary relation} is a relation $(V,E)$ where $E$ is of arity 2. The cardinality of a relation is the cardinality of its domain. 
When studying infinite relations, Thomass\'e \cite{TH2} made the following conjecture:

% Conjecture 1.1
\begin{conjecture} [Thomass\'e's Conjecture, \cite{TH2}]
Let R be a countable relation. Then $Sib(R)=1$, $\aleph_0$ or $2^{\aleph_0}$. 
\end{conjecture}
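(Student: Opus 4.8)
The plan is to split the argument into two independent tasks and then combine them. First, one shows that $Sib(R)$ is never a finite integer greater than $1$, i.e.\ the \emph{alternate} dichotomy $Sib(R)=1$ or $Sib(R)\ge\aleph_0$ holds. Second, one shows that whenever $Sib(R)$ is infinite it is already forced up to the full continuum $2^{\aleph_0}$. The upper bound $Sib(R)\le 2^{\aleph_0}$ is automatic, since for a fixed finite relational signature there are at most $2^{\aleph_0}$ structures on the domain $\mathbb{N}$, hence at most $2^{\aleph_0}$ isomorphism types. Thus the real content is to exclude the forbidden values: every finite cardinal $\ge 2$, and (when the Continuum Hypothesis fails) every cardinal strictly between $\aleph_0$ and $2^{\aleph_0}$.

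For the finite dichotomy I would argue by contradiction from a single nontrivial sibling. Suppose $S\approx R$ with $S\not\cong R$, and fix embeddings $f\colon R\hookrightarrow S$ and $g\colon S\hookrightarrow R$. The composite $h=g\circ f\colon R\hookrightarrow R$ is a self-embedding that is not an automorphism, and the key is to extract from $h$, together with a structure theory for $R$, an infinite supply of pairwise non-isomorphic siblings. The mechanism is \emph{surgery} on a repeated pattern: one locates an interval, or an infinite sequence of interchangeable or duplicable intervals, witnessed by the non-surjectivity of $h$, and varies how many copies are retained or how they are arranged to manufacture siblings $S_0,S_1,S_2,\dots$, using the structure theory to certify that no two are isomorphic. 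It is exactly here that the well-quasi-order property and the labelled decomposition trees of the present paper are deployed for the class of countable $NE$-free posets; for arbitrary countable relations no such uniform structure theory is known, and supplying it is the principal obstacle to the full conjecture.

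For the jump from \emph{infinite} to \emph{exactly} $2^{\aleph_0}$ I would work in the Polish space of countable structures of the given signature. The sibling set $\{S:S\approx R\}$ is defined by a conjunction of two embeddability statements, each an existential quantification over functions $\mathbb{N}\to\mathbb{N}$, hence analytic ($\Sigma^1_1$); and isomorphism, the orbit equivalence relation of the logic action of $S_\infty$, is analytic as well. By Burgess's theorem an analytic equivalence relation has either at most $\aleph_1$ classes or a perfect set of pairwise inequivalent elements, so at this level one already obtains $Sib(R)\le\aleph_1$ or $Sib(R)=2^{\aleph_0}$. To close the remaining $\aleph_1$-gap without assuming the Continuum Hypothesis, I would upgrade the finitely-many-to-infinitely-many construction of the previous step to a \emph{Cantor scheme}: embed the binary tree $2^{<\omega}$ into the surgery apparatus so that each branch $x\in 2^{\omega}$ determines a sibling $S_x$, with distinct branches yielding non-isomorphic siblings. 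A perfect set of siblings then gives $2^{\aleph_0}$ directly, bypassing the descriptive-set-theoretic gap entirely.

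I expect the main obstacle to be the finite dichotomy in full generality: turning a single self-embedding that is not onto into a genuine infinite, and then perfect, family requires isolating a duplicable or rearrangeable piece of $R$ and proving the resulting structures are pairwise distinct, and this is precisely what lacks a signature-independent proof. The present paper circumvents the difficulty by restricting to countable $NE$-free posets, where the well-quasi-order property together with ordered decomposition trees makes both the extraction of the repeated pattern and the non-isomorphism bookkeeping tractable; extending that machinery to all countable relations is the crux on which a proof of the conjecture exactly as stated would rest.
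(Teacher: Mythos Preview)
The statement you are attempting to prove is not proved in the paper at all: it is Thomass\'e's Conjecture, recorded as a conjecture in the introduction, and the paper explicitly notes that it has since been \emph{disproved}. Quoting the introduction: ``Abdi, Laflamme, Tateno and Woodrow \cite{ALTW} provided countable partial orders with a similar conclusion which disproves Thomass\'e's conjecture.'' Consequently there is no proof in the paper for you to match, and no correct proof can exist: there are countable relations (in fact countable partial orders) $R$ with $1<Sib(R)<\aleph_0$.

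Your proposal is therefore not a proof but a programme, and you are candid about this: you write that ``for arbitrary countable relations no such uniform structure theory is known, and supplying it is the principal obstacle to the full conjecture.'' That obstacle is not merely unresolved, it is insurmountable, because the counterexamples of \cite{ALTW} show that the ``finite dichotomy'' step---turning one nontrivial sibling into infinitely many---genuinely fails for some countable structures. In particular, your surgery mechanism (locating a duplicable interval from a non-surjective self-embedding and varying copies to produce pairwise non-isomorphic siblings) cannot work uniformly: for the Tateno-type examples every such variation collapses back to one of finitely many isomorphism types. The descriptive-set-theoretic portion (Burgess, Cantor schemes) is fine as far as it goes, but it only addresses the passage from $\aleph_0$ to $2^{\aleph_0}$ and presupposes the very dichotomy that is false. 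What the paper actually establishes is the restricted statement $Sib(P)\in\{1,\aleph_0,2^{\aleph_0}\}$ for countable $NE$-free posets whose comparability graph has a single sibling (Theorem~\ref{GtoP}), and the alternate dichotomy $Sib(P)\in\{1,\infty\}$ for all countable $NE$-free posets (Theorem~\ref{AltThomasseNEP}); the full trichotomy for arbitrary countable $NE$-free posets is left open (see the Problem in the final section).
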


There is also an alternate version of the conjecture as follows:

% Conjecture 1.2
\begin{conjecture}[The Alternate Thomass\'e Conjecture]
Let R be a relation of any cardinality. Then $Sib(R)=1$ or $\infty$.  
\end{conjecture}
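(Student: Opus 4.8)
The plan is to attack the dichotomy by ruling out the only forbidden range of values, namely a finite sibling number $k$ with $2\le k<\infty$. So I would assume toward a contradiction that $R$ has exactly $k$ pairwise non-isomorphic siblings $R=R_0,R_1,\dots,R_{k-1}$ with $2\le k<\infty$ and try to manufacture a $(k+1)$-st one, or, better, an infinite family of pairwise non-isomorphic siblings outright. The first concrete step is to extract a proper self-embedding. Since $Sib(R)\ge 2$ there is a sibling $S\not\cong R$ with embeddings $f\colon R\hookrightarrow S$ and $g\colon S\hookrightarrow R$, and the composite $e=g\circ f\colon R\hookrightarrow R$ is a self-embedding whose image satisfies $e(R)\cong R$. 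This $e$ cannot be surjective: a surjective embedding of relational structures is an isomorphism, so if $e$ were onto then $g$ would be onto and hence an isomorphism $S\to R$, contradicting $S\not\cong R$. Thus $Sib(R)\ge 2$ always supplies a \emph{non-surjective} self-embedding with $e(R)\subsetneq R$ and $e(R)\cong R$, together with a genuine residue $R\setminus e(R)$ to exploit.

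The engine I would then build is a recombination construction parametrised by a quantity that can take infinitely many values. Iterating $e$ gives a descending chain $R\supseteq e(R)\supseteq e^{2}(R)\supseteq\cdots$ of isomorphic copies with a sequence of residues, and the strategy is to splice copies of $R$, $S$, and these residues together in controlled amounts so that the result still embeds both into and from $R$ (hence is a sibling) while its isomorphism type depends on the amount of splicing. This mirrors the way one shuffles intervals to realise many siblings of the rationals. A crucial caveat guides the design: the mere existence of a non-surjective self-embedding is \emph{not} enough, as the chain $\mathbb{Z}$ shows, where $x\mapsto 2x$ is a proper self-embedding yet $Sib(\mathbb{Z})=1$ because its residue is not recombinable into a new order type. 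What must be leveraged is the genuine \emph{second} sibling $S\not\cong R$, so the target structural lemma is a rigidity/recombination dichotomy: either every self-embedding is surjective up to isomorphism in a strong enough sense that forces $Sib(R)=1$, or some residue is repeatable, in which case a single repeatable residue already yields an infinite, pairwise non-isomorphic family, so no finite value $k\ge 2$ can be stable.

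The hard part, and the reason this statement is posed as a conjecture rather than a theorem, is precisely the passage to arbitrary cardinality and arbitrary arity, where none of the hypotheses that make the splicing tractable are available. For countable $NE$-free posets one has both the well-quasi-order property of the class and a canonical decomposition tree, so a minimal counterexample can be cornered and the residues controlled by finitely many labelled trees; this is exactly the machinery the present paper deploys. In full generality there is no wqo, no homogeneity, and no canonical decomposition with which to organise the residues, and one must moreover guarantee that the recombined structures remain pairwise non-isomorphic across all cardinalities at once. Characterising exactly when a residue is recombinable, i.e.\ locating the boundary between the rigid side ($Sib(R)=1$) and the recombination side ($Sib(R)=\infty$), for all relations simultaneously, is the genuine obstacle; to my knowledge it remains open, and the contribution here is to settle it in the tractable countable $NE$-free regime.
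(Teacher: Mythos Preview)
The statement you are trying to prove is a \emph{conjecture}, and the paper does not claim to prove it; there is no ``paper's own proof'' to compare against. The paper's contribution is the special case of countable $NE$-free posets (Theorem~\ref{AltThomasseNEP}), which you correctly identify in your final paragraph.

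More importantly, your proposal cannot be completed, because the conjecture is \emph{false} in the generality stated. The paper's own introduction records this: Abdi, Laflamme, Tateno and Woodrow \cite{ALTW} construct countable partial orders with an arbitrary prescribed finite number of siblings, so for every finite $k\ge 2$ there is a relation $R$ with $Sib(R)=k$. This directly refutes both Conjecture~1.1 and Conjecture~1.2. Consequently the ``rigidity/recombination dichotomy'' you propose as the target structural lemma cannot hold: there exist relations that are neither rigid enough to force $Sib(R)=1$ nor recombinable enough to yield infinitely many siblings. Your $\mathbb{Z}$ caveat already hints at the difficulty, but the actual obstruction is stronger than a missing tool; there are genuine counterexamples.

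Your instinct that the general problem requires structural hypotheses (wqo, decomposition trees) to become tractable is exactly right, and that is precisely why the paper restricts to countable $NE$-free posets. But the framing of your proposal as a strategy toward the full conjecture, with the general case merely ``open'', should be corrected: the general case is settled in the negative.
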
 

The alternate Thomass\'{e} conjecture is connected to a conjecture of Bonato-Tardif \cite{BT} ({\em The Tree Alternative Conjecture}) stating that the sibling number of a tree of any cardinality is one or infinite in the category of trees. The connection is through this observation that each sibling of a tree $T$ is a tree if and only if $T\oplus 1$, the graph obtained by adding an isolated vertex to $T$, does not embed in $T$. Therefore, for a tree $T$ not equimorphic to $T\oplus 1$, the tree alternative conjecture and the alternate Thomass\'e conjecture are equivalent. The tree alternative conjecture has been verified for rayless trees by Bonato and Tardif \cite{BT}, for rooted trees by Tyomkyn \cite{TY} and for scattered trees by Laflamme, Pouzet and Sauer \cite{LPS}. In parallel, the alternate Thomass\'e conjecture has been proved for rayless graphs by Bonato, Bruhn, Diestel and Spr\"ussel \cite{BBDS}, for countable $\aleph_0$-categorical structures by Laflamme, Pouzet, Sauer and Woodrow \cite{LPSW}, for countable cographs by Hahn, Pouzet and Woodrow \cite{HPW} and for countable universal theories by Braunfeld and Laskowski \cite{BL}. Furthermore, Thomass\'e's conjecture for countable chains and the alternate Thomass\'e conjecture for all chains have been proved by Laflamme, Pouzet and Woodrow \cite{LPW}. A first step towards partial orders was taken by Abdi \cite{A} who proved Thomasse\'e's conjecture for countable direct sums of chains and the alternate Thomass\'e conjecture for all direct sums of chains. As a generalisation of direct sums of chains, one of the important classes of partial orders is the class of $N$-free partial orders. This class is also a generalisation of the class of series-parallel partial orders which has motivated many studies in order-theoretic mathematics. In this paper, we show that a countable $N$-free poset has one or infinitely many siblings, up to isomorphism. This shows that the class of countable $N$-free partial orders fits into the sibling programme.

While working on the conjectures above, through a personal conversation with Tyomkyn \cite{TY1}, the author was informed of an unpublished manuscript of Tateno \cite{TAT} claiming a counterexample to the Bonato-Tardif conjecture.
Together with Laflamme, Tateno and Woodrow \cite{ALTW}, we revisited and verified Tateno's claim and constructed locally finite trees having an arbitrary finite number of siblings. Moreover, using an adaptation, the tree examples (in order theoretic sense) can provide countable partial orders with a similar conclusion which disproves Thomass\'e's conjecture (see \cite{ALTW}, section 3). It turns out that both conjectures of Bonato-Tardif and Thomass\'e  are false. 
This is a major development in the program of understanding siblings of a given mathematical structure. While counting the number of siblings provides a good first insight into the siblings of a structure, in particular understanding which structures exactly satisfy the conjectures, the equimorphy programme is now ready to move on and focus on the actual structure of the siblings of those structures.
%We now know that the simple dichotomy one vs infinite for the number of siblings is no longer valid, and the situation is much more intricate.    
 
% --- Section 2
\section{Overview}

In order-theoretic mathematics, the class of series-parallel posets is the smallest class of finite posets obtained from the one element poset by iteration of two operations: direct sum and linear sum. This class of posets was introduced by Lawler and studied by Valdes and Tarjan.  
Grillet and Heuchenne introduced $N$-free posets and later Habib and Jegou \cite{HJ} showed that $N$-free posets are generalisations of series-parallel posets by giving a recursive construction of $N$-free posets.  
Finite and some infinite $N$-free posets can be obtained by direct sum and linear sum. However, there are infinite $N$-free posets which need another operation, namely sum over a labelled chain in which the chain has no least element. In order to count the siblings of countable $N$-free posets, we first determine their structure. For this purpose, we use modular decomposition of $N$-free posets in Section \ref{DT}. Section \ref{StructureNE} determines the structure of $N$-free posets by classifying them. 
Then Section \ref{SibNE} will provide a proof of the main result of this article: a countable $N$-free poset has one or infinitely many siblings. In Section \ref{Finiteposetsub}, we will show that a countable $N$-free poset $P$ whose comparability graph has one sibling is a finite poset substitution of chains or antichains and that Thomass\'{e}'s conjecture holds for $P$. This is a generalisation of a result of \cite{LPW} for countable chains whose comparability graph is a clique. More, the result of Section \ref{Finiteposetsub} leaves Thomass\'e's conjecture open for a countable $N$-free poset whose comparability graph has infinitely many siblings.

%%%%%%%%%%%%%%%
% Section 3
% Posets
\section{Partial Orders and $N$-Free Posets}

A \textit{poset (partially ordered set)} is a binary relation $P=(V,\leq)$ where $\leq$ is reflexive, antisymmetric and transitive. If a poset $P$ is given, by $\leq_P$, we mean the order of $P$. Let $P=(V,\leq)$ be a poset. By an element $x$ of $P$ we mean $x\in V$. By $\leq^*$ we mean the partial order obtained by reversing the order $\leq$, that is $x\leq^* y$ if and only if $y\leq x$ where $x, y\in P$. We denote the poset $(V,\leq^*)$ by $P^*$. Any substructure, resp induced substructure, of $P$ is called a \textit{subposet},  an {\em induced subposet}, of $P$. For two subposets $P_1$ and $P_2$ of $P$ we write $P_1 \leq P_2$ if for every $x\in P_1$ and every $y\in P_2$ we have $x \leq y$. Note that $P_1$ or $P_2$ might be empty. The empty set is both greater than and less than any set. Therefore, $\leq$ on set is not a partial order.  Two elements $x, y \in P$ are called \textit{comparable} if $x \leq y$ or $y \leq x$, otherwise they are \textit{incomparable}, denoted by $x \perp y$. A {\em chain},  an {\em antichain}, is a poset whose elements are pairwise comparable,  incomparable. Two  subposets $P_1, P_2$ of $P$ are {\em comparable} if $P_1 \leq P_2$ or $P_2 \leq P_1$, and {\em incomparable}, denoted by $P_1\perp P_2$, if $x\perp y$ for every $x\in P_1$ and every $y\in P_2$. Two elements $x, y \in P$ are \textit{compatible} if the pair $\{x, y\}$ has a lower bound, that is, there is $z\in P$ such that $z\leq x$ and $z\leq y$.  The graph $CG(P)=(V,E)$ such that for two distinct $x, y \in P$, $xy \in E$ if $x < y$ or $y < x$, is called the \textit{comparability graph} of $P$. Note that a partial order is reflexive meaning that each element of a poset is comparable to itself, however, in order to get simple graphs, our definition of a comparability graph prevents having loops in the graphs. A \textit{connected},  \textit{disconnected}, poset is a poset whose comparability graph is connected,  disconnected. By a \textit{component} of a poset $P=(V,\leq)$, we mean an induced subposet $Q=(W,\leq)$ of $P$ where $W$ is the vertex set of a component of $CG(P)$. We say two elements $x, y$ of $P$ are connected if they belong to the same component of $P$; equivalently, they are connected in $CG(P)$ by a path and we say $x, y$ are disconnected when they belong to different components of $P$. 

Let us denote by an `$N$' the following poset on four elements $\{a, b, c, d\}$ such that $a < b$, $c < b$, $c < d$, $a\perp c$, $b \perp d$ and $a \perp d$. An $N$-\textit{free poset} is a poset which does not embed an $N$. For instance, a chain is an $N$-free poset.

%%%%% subsection 3.1
\subsection{Graph Substitution and Poset Substitution} \label{GPSubstitution}

Let $G$ be a graph. By $V(G)$, resp $E(G)$, we mean the set of vertices, resp edges, of $G$. For simplicity, by $x\in G$ we mean that $x$ is a vertex of $G$. The following definition is from \cite{CD}. 

% Definition 3.1
\begin{definition} [Graph Substitution] \label{Graphsub}
Let $K$ be a graph and let $(H_v)_{v\in K}$ be a pairwise disjoint family of graphs. We denote by $G:=K[H_v/v : v\in K]$ the graph resulting from the substitution of $H_v$ for $v\in K$ as follows: 
\begin{enumerate}
    \item $V(G)=\bigcup_{v\in K} V(H_v)$;
    \item $xy \in E(G)$ if and only if either
    \begin{enumerate}
        \item $xy\in E(H_v)$ for some $v\in K$; or
        \item for two distinct $u, v \in K$ we have $x \in H_u$, $y \in H_v$ and $uv\in E(K)$. 
    \end{enumerate}
\end{enumerate} 
Then we say $G$ is a {\em graph substitution} of the $H_v$ for $K$. We call $K$ the {\em context graph} and the $H_v$ the {\em graph blocks} of $G$. If $K$ is a clique (complete graph), resp an independent set, then $G$ is called a {\em complete}, resp {\em direct}, {\em sum} of graphs.
\end{definition} 

A poset substitution applies a {\em composition operation} which is the substitution of a poset $P$ for an element $x$ in a poset $Q$. In the resulting poset, denoted by $Q[P/x]$, the element $x$ is replaced by $P$, and all elements in $Q$ comparable to $x$ are comparable to all elements of $P$. 
A \textit{chain} ({\em linearly ordered set}),  an \textit{antichain}, is a poset whose elements are pairwise comparable,  incomparable. Any substructure of a chain is called a {\em subchain}.

% Definition 3.2
\begin{definition} [Poset Substitution] \label{PosetSub} 
Let $Q$ be a poset and $\{P_v\}_{v\in Q}$ be a family of pairwise disjoint posets. For each $v\in Q$ let $\leq_v$ be the order of $P_v$. The {\em poset substitution} of the $P_v$ for $Q$ is a poset, denoted by $P:=Q[P_v/v : v \in Q]$, defined as follows:  
\begin{enumerate}
    \item $P=\bigcup_{v \in Q} P_v$
    \item for $x, y \in P$, $x \leq_P y$ if and only if 
     \begin{enumerate} 
        \item $x, y \in P_v$ for some $v \in Q$ and $x \leq_v y$; or
        \item for two distinct $u, v\in Q$, we have  $x \in P_u$, $y \in P_v$ and $u \leq_Q v$. 
    \end{enumerate} 
\end{enumerate}
The poset $Q$ is called the {\em context poset} of $P$ and each $P_v$ is called a {\em poset block} of $P$.   If $Q$ is a chain,  an antichain, $P$ is called a {\em linear},  {\em direct, sum} of posets, denoted by $P=+_{v\in Q} P_v$, resp $P=\bigoplus_{v\in Q} P_v$ and each $P_v$ is called a {\em summand}, resp {\em component}, of the linear, resp direct, sum.
\end{definition}

%%%%%%%%%%% Section 4
\section{Modular Decomposition} \label{DT}

Courcelle and Delhomm\'{e} \cite{CD} investigated the modular decomposition of infinite (mainly countable) graphs. The notion of modular decomposition is essential for establishing structural properties of graphs and related objects, in particular of partial orders and their comparability graphs. This fact is a special case of a general result about modular decomposition of binary relations. In particular,
each $N$-free poset can be represented by a special type of ordered and labelled tree which is called \textit{decomposition tree}.  Due to the importance of such trees,  in this section, we give a detailed presentation of them. The materials  of this section are taken from \cite{CD} and \cite{HPW}. Indeed, we construct the decomposition tree of an $N$-free poset in a way similar to the method employed by \cite{HPW} for cographs. However, one can find more details about the decomposition tree of a countable graph in \cite{CD}.
%The construction of such trees is based on a fundamental notion, namely robust module, which will be introduced in the following subsection. Robust modules are pairwise disjoint or comparable w.r.t inclusion and we order them under reverse inclusion to form a tree in the order theoretic sense which is a partial order where for each $x$,  the down-set determined by $x$ is linearly ordered and that any two elements have a common lower bound.  

%%% Section 4.1 Modules
\subsection{Modules} \label{Modules}

Let $S$ be a set. A \textit{binary structure over} $S$ is a pair $\mathbb{B}:=(V,d)$ where $V$, the {\em domain} of $\mathbb{B}$, is a non-empty set and $d$ is a map from $V\times V$ into $S$. A subset $M$ of $V$ is a \textit{module} of $\mathbb{B}$ if $d(x,y)=d(x,y')$ and $d(y,x)=d(y',x)$ for every $x \in V\setminus M$ and every $y, y' \in M$. Modules are also called {\em intervals}. The sets $\emptyset$, $V$ and the singletons are called \textit{trivial} modules. A binary structure is called {\em indecomposable} if all its modules are trivial. If moreover it has more than two vertices it is called {\em prime}. A non-empty module is called \textit{strong} if it is comparable w.r.t inclusion to every module which it meets. %We provide a proof for the following lemma. 

% Example 3.1
%\begin{example}
%Let $P=(V,\leq)$ be a poset and $S=\{-1,0,+1\}$.  Define $d:V\times V\to S$ by $d(x,y)=0$ if $x\perp y$, $d(x,y)=+1$ if $x < y$ and $d(x,y)=-1$ if $y < x$. Then $P$ is a binary structure over S. A module of $P$ is a subposet $M$ of $P$ such that each element of $P\setminus M$ is greater than all elements of M, less than all elements of M, or incomparable to all elements of M. 
%\end{example}

%A quasi-order $(A,\leq)$ is called {\em directed} if every pair of elements have an upper bound, that is, for every $a, b\in A$ there exists $c\in A$ such that $a\leq c$ and $b\leq c$. We present a proof for the following lemma given in \cite{HPW}. 

%% Lemma 4.1
\begin{lemma} [\cite{HPW}] \label{Intersectionunionstrong}
The intersection of any set of strong modules is either empty or a strong module.
\end{lemma}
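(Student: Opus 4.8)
The plan is to argue in two stages: first that the intersection is a module whenever it is non-empty, and then that it inherits strongness from the given family.

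For the first stage, let $\{M_i\}_{i\in I}$ be a family of strong modules of $\mathbb{B}=(V,d)$ and set $M:=\bigcap_{i\in I}M_i$. If $M=\emptyset$ we are done, so assume $M\neq\emptyset$. I would check directly that $M$ is a module: given $x\in V\setminus M$, choose some index $i\in I$ with $x\notin M_i$; then for any $y,y'\in M\subseteq M_i$ the module property of $M_i$ applied to $x$ gives $d(x,y)=d(x,y')$ and $d(y,x)=d(y',x)$. Since $x$ was an arbitrary element of $V\setminus M$, this shows that $M$ is a non-empty module.

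For the second stage, let $N$ be any module of $\mathbb{B}$ that meets $M$, i.e. $M\cap N\neq\emptyset$. The key observation is that $M\cap N\subseteq M_i\cap N$ for every $i\in I$, so $M_i\cap N\neq\emptyset$ for every $i$, that is, each $M_i$ meets $N$; strongness of $M_i$ then yields $M_i\subseteq N$ or $N\subseteq M_i$ for each $i\in I$. Now distinguish two cases. If $N\subseteq M_i$ holds for every $i\in I$, then $N\subseteq\bigcap_{i\in I}M_i=M$. Otherwise there is some $i_0\in I$ with $N\not\subseteq M_{i_0}$; by the dichotomy above this forces $M_{i_0}\subseteq N$, and hence $M\subseteq M_{i_0}\subseteq N$. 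In either case $M$ and $N$ are comparable with respect to inclusion, so $M$ is strong, as claimed.

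There is no serious obstacle: the argument is a short manipulation of the definitions of module and strong module. The only point that requires a moment's care is the case split in the second stage, namely that the negation of ``$N\subseteq M_i$ for all $i$'', together with the strongness dichotomy at the offending index $i_0$, forces $M_{i_0}\subseteq N$ and therefore $M\subseteq N$; the rest is bookkeeping.
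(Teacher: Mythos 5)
Your proof is correct and complete: the first stage correctly verifies the module property of the non-empty intersection by passing to an index $i$ with $x\notin M_i$, and the second stage correctly derives comparability with any module $N$ meeting the intersection from the dichotomy $M_i\subseteq N$ or $N\subseteq M_i$ at each index. The paper itself states this lemma as a quoted result from \cite{HPW} without reproducing a proof, so there is no in-paper argument to compare against; your verification is the standard one and fills that gap correctly.
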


Let $A$ be a non-empty subset of $V$. Let $S_{\mathbb{B}}(A)$ be the intersection of strong modules of $\mathbb{B}$  containing $A$. We write $S_{\mathbb{B}}(x,y)$ instead of $S_{\mathbb{B}}(\{x,y\})$. According to  Lemma \ref{Intersectionunionstrong}, $S_{\mathbb{B}}(A)$ is a strong module. A module is called \textit{robust} if it is either a singleton or the smallest strong module containing two distinct elements. Hence, if $A$ is a non-trivial robust module, then there exist $x\neq y \in A$ such that $A$ is strong and every strong module containing $x$ and $y$ contains $A$, that is, $A=S_{\mathbb{B}}(x,y)$.  A binary structure $\mathbb{B}$ is called {\em robust} if its domain is a robust module of $\mathbb{B}$.  

Let $A$ be a strong module and $x, y \in A$. We set $x \equiv_A y$ if either $x=y$ or there is a strong module containing $x$ and $y$ and properly contained in $A$. It is proven (Lemma 6.5 in \cite{HPW}) that the relation $\equiv_A$ is an equivalence relation on $A$ whose equivalence classes are strong modules. Also, if $A$ has more than one element, then there are at least two classes if and only if $A$ is robust. Moreover, these classes are the maximal strong modules properly contained in $A$. 
The equivalence classes of a strong module $A$ are called \textit{components} of $A$. We say that a module is {\em limit} if it is a strong module such that none of its non-empty proper strong submodules is maximal (see \cite{HPW}). %For the following proposition, we reproduce the proof given in \cite{HPW}. 

% Proposition 4.2
\begin{proposition} [\cite{HPW}] \label{Nonlimitrobust}
Let $\mathbb{B}=(V,d)$ be a binary structure and A a subset of V. Then A is a non-trivial robust module if and only if $A$ is not a limit module.
\end{proposition}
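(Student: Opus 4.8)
The plan is to read both directions off the structure of the equivalence relation $\equiv_A$ on a strong module $A$, whose properties are recalled just above the statement (Lemma~6.5 of \cite{HPW}): its classes are strong modules, there are at least two of them exactly when $A$ is robust, and in that case they are precisely the maximal strong modules properly contained in $A$. I would work throughout with $A$ a non-empty strong module, so that ``non-trivial'' simply means $|A|\ge 2$; the degenerate values $A=\emptyset$, $|A|=1$, and $A=V$ I would dispose of separately by inspecting the definitions of trivial and limit module.

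For the direct implication I would assume $A$ is a non-trivial robust module, so $A=S_{\mathbb{B}}(x,y)$ for some distinct $x,y\in A$ and hence $|A|\ge 2$. By the recalled facts $\equiv_A$ then has at least two classes, each of which is a maximal strong module properly contained in $A$; choosing any one such class $M$ gives a non-empty proper strong submodule of $A$ that is maximal among the non-empty proper strong submodules of $A$, which is exactly what it means for $A$ not to be a limit module.

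For the converse I would assume $A$ is not a limit module and show it is a non-trivial robust module. Since $A$ is a strong module, it can fail to be limit only through the existence of a non-empty proper strong submodule $M$ of $A$ that is maximal among the non-empty proper strong submodules of $A$; in particular $A$ has a non-empty proper submodule, so $|A|\ge 2$ and $A$ is non-trivial. To get robustness it is enough, by the recalled characterisation, to exhibit two $\equiv_A$-classes. I would fix $x\in M$ and $y\in A\setminus M$ and show $x\not\equiv_A y$: otherwise some strong module $N$ has $x,y\in N\subsetneq A$, and since $N$ meets $M$ the strongness of $M$ makes $M$ and $N$ comparable under inclusion, while $y\in N\setminus M$ forces $M\subsetneq N\subsetneq A$, contradicting the maximality of $M$. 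So $x$ and $y$ lie in distinct $\equiv_A$-classes, $A$ is robust, and together with non-triviality this gives the claim.

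The hard part will be the converse, and specifically the moment described above: one must unpack ``$A$ is not limit'' to extract a genuinely \emph{maximal} proper strong submodule $M$, and then use the defining property of strongness — comparability under inclusion with every module it meets — at precisely the right step to convert that single maximal submodule into a second $\equiv_A$-class, which is what robustness means via Lemma~6.5. The remaining work (the two invocations of Lemma~6.5 of \cite{HPW} and the bookkeeping for $A=\emptyset$, $|A|=1$, and $A=V$) is routine.
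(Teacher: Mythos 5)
The paper states this proposition without proof, citing \cite{HPW}, so there is no in-paper argument to compare against; your proof is correct and is the natural derivation from the recalled facts about $\equiv_A$ (Lemma~6.5 of \cite{HPW}): robustness of a strong module with at least two elements is equivalent to $\equiv_A$ having at least two classes, and those classes are exactly the maximal proper strong submodules, which is precisely the negation of being limit. Both directions check out, including the key step in the converse where strongness of the maximal submodule $M$ forces any strong $N$ with $x\in M$, $y\in N\setminus M$, $N\subsetneq A$ to strictly contain $M$. The only caveat is about the statement rather than your argument: as literally phrased for an arbitrary subset $A\subseteq V$, the ``if'' direction fails for subsets that are not strong modules (they are neither limit nor robust), so your standing assumption that $A$ is a non-empty strong module is not merely bookkeeping but the correct reading of the proposition.
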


%\begin{proof}
%Suppose that $A$ is a non-limit module. Let $x\in I\subset A$ with $I$ maximal among the strong modules contained in $A$ and distinct from $A$. Let $y\in A\setminus I$. Set $A':=S_\mathbb{B}(x,y)$. Since $I$ is strong, $A'$ is a strong module properly containing $I$. Due to the choice of $I$, $A'$ is equal to $A$, hence, $A$ is robust. Conversely, suppose that $A$ is robust with at least two elements. Then, the components of $A$, as defined above, are the maximal non-empty strong modules properly contained in $A$ and in particular $A$ is non-limit. 
%\end{proof}

By Proposition \ref{Nonlimitrobust}, a module is limit if and only if it is not robust. 
If $A$ is a robust module of a binary structure with at least two elements, then there are two distinct components $I, J$ of $A$. Let $x, x' \in I$ and $y, y' \in J$. Since $J$ is a module and $x \in A\setminus J$, we have $d(x,y)=d(x,y')$. Since $I$ is a module and $y' \in A\setminus I$, we have $d(x,y')=d(x',y')$. So, $d(x,y)=d(x',y')$ meaning that the values $d(x,y)$ for $x \in I$ and $y \in J$ depend only upon $I$ and $J$. Therefore, the binary structure on $A$ induces a binary structure on the set $A/\equiv_A$ of components of $A$, called the \textit{Gallai quotient} of $A$.  The following proposition is given in \cite{HPW} as an observation without proof, however, we give a proof for completeness.  

% Proposition 4.3
\begin{proposition} \label{Gallai}
Let $A$ be a robust module of a binary structure $\mathbb{B}$. The strong modules of the Gallai quotient of A are trivial: the empty set, the whole set $A/\equiv_A$ and the singletons.  
\end{proposition}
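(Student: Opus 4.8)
The plan is to argue by contradiction, reducing a hypothetical non-trivial strong module of the Gallai quotient to a strong submodule of $A$ lying strictly between a component and $A$ itself, which is impossible. Write $Q$ for the Gallai quotient, a binary structure on the set $C:=A/\equiv_A$ of components, and let $d_Q$ be its structure map, so that $d_Q(I,J)=d(x,y)$ whenever $I\neq J$ are components and $x\in I$, $y\in J$ (this is well defined, as recorded just before the statement). Recall from the preceding discussion that $A$, being robust, is a strong module, that each component of $A$ is a strong module of $\mathbb{B}$, and that the components of $A$ are exactly the \emph{maximal} strong modules properly contained in $A$ (cf.\ \cite{HPW}). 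Suppose for contradiction that $\mathcal{M}$ is a non-trivial strong module of $Q$, so $\mathcal{M}$ contains at least two distinct components and omits at least one component $K$. Set $\widetilde{M}:=\bigcup_{I\in\mathcal{M}}I\subseteq A$.

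First I would check that $\widetilde{M}$ is a module of $\mathbb{B}$. Given $z\in V\setminus\widetilde{M}$ and $y,y'\in\widetilde{M}$, say $y\in I$ and $y'\in J$ with $I,J\in\mathcal{M}$: if $z\notin A$ then $d(z,y)=d(z,y')$ and $d(y,z)=d(y',z)$ because $A$ is a module; if $z$ lies in a component $K'\notin\mathcal{M}$ and $I=J$, the same follows because $I$ is a module not containing $z$; and if $z\in K'\notin\mathcal{M}$ with $I\neq J$ (so $K'\neq I$ and $K'\neq J$), then $d(z,y)=d_Q(K',I)$ and $d(z,y')=d_Q(K',J)$, and these agree because $\mathcal{M}$ is a module of $Q$ with $K'\notin\mathcal{M}$; the reversed equalities are identical.

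The main step, and the one I expect to be the real obstacle, is to upgrade this to: $\widetilde{M}$ is a \emph{strong} module of $\mathbb{B}$. Let $N$ be any module of $\mathbb{B}$ meeting $\widetilde{M}$; I must show $N$ and $\widetilde{M}$ are $\subseteq$-comparable. Since $A$ is strong, $N$ does not overlap $A$: either $A\subseteq N$, whence $\widetilde{M}\subseteq A\subseteq N$, or $N\subseteq A$. In the latter case, if $N$ is contained in a single component $I$, then $N\cap\widetilde{M}\neq\emptyset$ forces $I\in\mathcal{M}$ and so $N\subseteq I\subseteq\widetilde{M}$; and if $N$ lies in no single component, then---each component being strong---$N$ contains every component it meets, so $N=\bigcup_{I\in\mathcal{N}}I$ where $\mathcal{N}:=\{I\in C:N\cap I\neq\emptyset\}$, and a computation reading the module property of $N$, just like the one above, shows $\mathcal{N}$ is a module of $Q$. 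As $N$ meets $\widetilde{M}$, the module $\mathcal{N}$ meets $\mathcal{M}$, so strongness of $\mathcal{M}$ in $Q$ gives $\mathcal{M}\subseteq\mathcal{N}$ or $\mathcal{N}\subseteq\mathcal{M}$, that is $\widetilde{M}\subseteq N$ or $N\subseteq\widetilde{M}$. In every case $N$ is comparable with $\widetilde{M}$, so $\widetilde{M}$ is strong. The delicate point is recognizing that these are the only cases to treat: ambient modules $N$ that escape $A$ are killed by the strongness of $A$, and those spread across several components are handled via the strongness of the components, after which everything reduces to the assumed strongness of $\mathcal{M}$ inside $Q$.

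Finally, since $\mathcal{M}$ contains two distinct components, $\widetilde{M}$ properly contains one of them, say $I$; and since $\mathcal{M}$ omits $K$, we have $\widetilde{M}\subseteq A\setminus K\subsetneq A$. Thus $\widetilde{M}$ is a strong module of $\mathbb{B}$ with $I\subsetneq\widetilde{M}\subsetneq A$, contradicting the fact that $I$ is a maximal strong module properly contained in $A$. Hence $Q$ admits no non-trivial strong module, which is exactly the assertion.
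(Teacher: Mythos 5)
Your proposal is correct and follows essentially the same route as the paper: lift the hypothetical non-trivial strong module $\mathcal{M}$ of the quotient to the union $\bigcup\mathcal{M}\subseteq A$, verify it is a (strong) module, and contradict the maximality of the components among strong modules properly contained in $A$. Your verification of strongness is in fact more thorough than the paper's (you separately treat ambient modules escaping $A$ and modules spreading over several components, reducing the latter to strongness of $\mathcal{M}$ in the quotient), but the underlying idea is the same.
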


\begin{proof}
Let $\mathcal{B}$ be a strong module of $A/\equiv_A$ containing more than one component of $A$. Notice that $A$ has at least two components. Assume that $\mathcal{B}$ contains two distinct components $I$ and $J$ of $A$. Let $K \in A/\equiv_A \setminus \mathcal{B}$ and $I_1, I_2\in \mathcal{B}$. We have $d(K,I_1)=d(K,I_2)$ and $d(I_1,K)=d(I_2,K)$. It means that for every $x \in K$ and every $y, y' \in \bigcup \mathcal{B}$, we have $d(x,y)=d(x,y')$ and $d(y,x)=d(y',x)$. Thus, $\bigcup\mathcal{B}$ is a module of $A$. We prove that $\bigcup\mathcal{B}$ is a strong module of $A$. Let $N$ be a module of $A$ such that $N \cap (\bigcup\mathcal{B}) \neq \emptyset$. Without loss of generality assume that $N$ meets $I$. Since $I$ is strong, $N \subseteq I$ or $I \subseteq N$. In the first case $N \subseteq \bigcup\mathcal{B}$. In the second case if $I \subset N$, then $N=A$ because $I$ is maximal and hence $\bigcup\mathcal{B}\subseteq N$, so, $\bigcup \mathcal{B}$ is strong. Note that the components of $A$ are the maximal strong modules properly contained in $A$. Since $I$ is a proper subset of the strong module $\bigcup\mathcal{B}$, it follows that $\bigcup\mathcal{B}=A$. Equivalently, $\mathcal{B}=A/\equiv_A$.
\end{proof} 

The central result of the decomposition theory of binary structures describes the structure of the Gallai quotient. Let $\mathbb{B}=(V,d)$ be a binary structure. We say that $\mathbb{B}$ is {\em constant} with value $\alpha$ if $d(x,y)=\alpha$ for all $x\neq y\in V$, and {\em linear} with values $\{\alpha, \beta\}$, $\alpha\neq \beta$, if $\{(x,y)\in V\times V : x\neq y, d(x,y)=\alpha\}$ is a linear order and $\{(x,y)\in V\times V : x\neq y, d(x,y)=\beta\}$ is the opposite linear order.

% Theorem 4.4
\begin{theorem} [\cite{HPW}] \label{Primeconstantlinear}
The strong modules of a binary structure $\mathbb{B}$ are trivial if and only if $\mathbb{B}$ is prime,  constant or linear. 
\end{theorem}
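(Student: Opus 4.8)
The plan is to prove both directions separately, with the forward direction being essentially bookkeeping and the reverse direction being where the real work lies. For the reverse direction, suppose the strong modules of $\mathbb{B}$ are all trivial, and assume $\mathbb{B}$ is not prime; I want to show $\mathbb{B}$ is constant or linear. Since every module that is properly contained between two strong modules is trapped, the hypothesis says: every module of $\mathbb{B}$ is trivial, \emph{or} $\mathbb{B}$ has at most two vertices, or (the interesting case) $\mathbb{B}$ has non-trivial modules but none of them is strong. Actually the cleanest route is to observe that if all strong modules are trivial and $\mathbb{B}$ is not prime, then either $|V|\le 2$ (handled directly) or $\mathbb{B}$ has a non-trivial module $M$ that is not strong, hence meets some module incomparably to it. I would then fix a pair $x\neq y$ and consider $S_{\mathbb{B}}(x,y)$, the smallest strong module containing both; by hypothesis this is $V$ itself (it cannot be a singleton). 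So $V$ is a robust module, and Proposition~\ref{Nonlimitrobust} applies. But then by the preceding discussion $V$ has components (maximal proper strong modules), and since all strong modules are trivial, every component is a singleton — so $\equiv_V$ is equality and the Gallai quotient of $V$ is $\mathbb{B}$ itself with the induced structure on singletons.

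The heart of the argument is then the following combinatorial claim about a binary structure $\mathbb{B}=(V,d)$ all of whose modules are trivial (with $|V|\ge 3$, so $\mathbb{B}$ is prime — contradiction — unless we are in the degenerate situations). Let me restructure: the substantive content is really that a \emph{non-prime} indecomposable-type structure on the quotient forces constancy or linearity. So I would argue: pick any $x_0\in V$. For $v\in V\setminus\{x_0\}$ record the pair $(d(x_0,v), d(v,x_0))$. If two vertices $u,v$ give the same pair together with $(d(u,v),d(v,u))$ being "trivial" in the appropriate sense, then $\{u,v\}$ is a module, contradicting primality unless $|V|=2$. So distinct vertices are distinguished, and one extracts a tournament-like structure on $V$; the key step is showing that the relation "$d(x,y)=\alpha$" is either everywhere constant or is a strict linear order with $d(y,x)=\beta$ on the reverse. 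This is where I expect the main obstacle: ruling out "intermediate" behaviors — e.g. a structure that is part-constant, part-asymmetric — by repeatedly exhibiting a two-element module. The argument should proceed by: first show $d$ takes at most two off-diagonal values (else merge a value-class into a module), then show that if it takes exactly two values $\{\alpha,\beta\}$ the $\alpha$-relation is total and transitive (transitivity failure yields a $3$-element module or a $2$-element module after quotienting), hence a linear order, with $\beta$ its converse by a symmetric argument.

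For the forward direction I would simply verify that a constant structure, a linear structure, and a prime structure each have only trivial strong modules. For constant $\mathbb{B}$: every subset is a module, so a non-trivial proper subset $M$ with $2\le |M|$ and a point outside it is a module meeting but incomparable to other such modules — hence no non-trivial module is strong. For linear $\mathbb{B}$ with values $\{\alpha,\beta\}$: a set $M$ is a module iff it is an interval of the associated linear order, and two overlapping non-trivial intervals (neither containing the other) exist unless the order is tiny, so again no non-trivial strong modules. For prime $\mathbb{B}$: by definition all modules are trivial, so a fortiori all strong modules are trivial. The only delicate point in the forward direction is the small-cardinality edge cases ($|V|\le 2$), which I would dispatch explicitly at the start. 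Throughout I would lean on Lemma~\ref{Intersectionunionstrong}, Proposition~\ref{Nonlimitrobust}, and Proposition~\ref{Gallai} so that the bulk of the module-manipulation is already packaged.
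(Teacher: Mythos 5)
The paper does not actually prove this theorem; it is imported verbatim from \cite{HPW} (it is a form of Gallai's decomposition theorem for binary structures), so there is no in-paper proof to compare against. Your forward direction is correct and routine: a prime structure has only trivial modules by definition, and in a constant (resp.\ linear) structure every subset (resp.\ every interval of the order) is a module, so any non-trivial proper module is overlapped incomparably by another module and hence is not strong.

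The reverse direction has a genuine gap. You repeatedly propose to rule out bad configurations by ``exhibiting a two-element module, contradicting primality'' --- but in this direction you are precisely in the case where $\mathbb{B}$ is \emph{not} prime, and the hypothesis is only that all \emph{strong} modules are trivial. Non-trivial, non-strong modules are not forbidden; they are ubiquitous in both target conclusions (every pair of vertices is a module of a constant structure, every interval of a linear one). So the assertion ``distinct vertices are distinguished'' is simply false in the constant case, and producing a two-element module never yields a contradiction. What the argument actually requires is either to produce a non-trivial \emph{strong} module --- which means verifying comparability with every module it meets, a step you never address --- or, as in the standard proof, to start from a non-trivial module $M$, use that $M$ is not strong to find a module $N$ overlapping it, exploit the closure of overlapping modules under union, intersection, difference and symmetric difference to show that the induced structure on the blocks $M\setminus N$, $M\cap N$, $N\setminus M$ is constant or linear, and then propagate this to all of $V$ by a maximality (Zorn-type) argument in the infinite case. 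Your preliminary observation that $S_{\mathbb{B}}(x,y)=V$, so $V$ is robust and its Gallai quotient is $\mathbb{B}$ itself, is correct but circular: applying Proposition~\ref{Gallai} there merely restates the hypothesis. As written, the heart of the hard direction is missing.
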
 

Let $A$ be a non-trivial robust module of a binary structure $\mathbb{B}$. The {\em type} of $A$, denoted by $t(A)$, is {\em prime} if $A/\equiv_A$ is prime, otherwise its type is $\alpha$ if $A/\equiv_A$ is  constant with value $\alpha$, and $\{\alpha, \beta\}$ if $A/\equiv_A$ is linear with values $\{\alpha, \beta\}$, $\alpha\neq \beta$. By Proposition \ref{Gallai},  the strong modules of the Gallai quotient $A/\equiv_A$ are trivial. Hence, the Gallai quotient $A/\equiv_A$ is either prime, constant or linear. 

Let $A$ be a non-trivial robust module of an $N$-free poset $P$. Note that $P$ can be regarded as a binary structure $\mathbb{B}=(P,d)$ where for $x, y\in P$, $d(x,y)=0$ if $x\perp y$, $d(x,y)=+1$ if $x <_P y$ and $d(x,y)=-1$ if $y <_P x$. By Proposition \ref{Gallai}, the strong modules of $A/\equiv_A$ are trivial and by Theorem \ref{Primeconstantlinear}, $A/\equiv_A$ is either prime, constant or linear. Further, a theorem due to Kelly \cite{KE} asserts that any indecomposable partial order with at least three elements embeds `$N$'. By definition, a partial order with only two elements is not prime. Therefore, the Gallai quotient $A/\equiv_A$ is either constant or linear. In the first case, the quotient $A/\equiv_A$ is an antichain i.e. $d(I,J)=0$ for each $I\neq J\in A/\equiv_A$. So, the type of $A$ is 0 in this case. In the second case, the quotient $A/\equiv_A$ is a linearly ordered set, that is, for two $I\neq J\in A/\equiv_A$, we have $I <_P J$ or $J <_P I$. Thus, $\{(I,J)\in A/\equiv_A\times A/\equiv_A : I\neq J, d(I,J)=+1\}$ is a linear order and $\{(I,J)\in A/\equiv_A\times A/\equiv_A: I\neq J, d(I,J)=-1\}$ is the opposite linear order. Hence, the type of $A$ is $\{-1,+1\}$ in this case.  

%%%%%%%%%%%%%%%%
%%% Section 4.2
\subsection{Decomposition Tree of $N$-Free Posets} \label{DTNE}

Our terminology in this section borrows from \cite{TH2} and \cite{HPW}. We follow the way of construction employed by \cite{HPW} in which a decomposition tree is a meet-tree ordered by reverse inclusion of robust modules. Let $P=(V,\leq)$ be a poset.  We use the notations $P^x=\{y \in V : x \leq y\}$ and $P_x=\{y \in V : y \leq x\}$. A \textit{forest} is a poset $F$ such that for every $x\in F$, the set $F_x$ is a chain; the elements of $F$ are called \textit{nodes}.  A \textit{tree} is a forest whose elements are pairwise compatible. Let $(T,\leq)$ be a tree. 
A {\em branch} of a node $x\in T$ is an induced and maximal (under inclusion) subtree of $T^x\setminus \{x\}$. A {\em limit branch} is a branch of a node $x\in T$ with no minimal element. For two nodes $x, y$ of $T$, we say $x$ is a \textit{child} of $y$ if $y < x$ and there is no $z\in T$ such that $y < z < x$. A tree is a \textit{meet-tree} if any two nodes $x, y$ have a greatest lower bound, called their \textit{meet} and denoted by $x \wedge y$. 
Let $T$ be a meet-tree. A \textit{leaf} in $T$ is a maximal node, and a \textit{root} is a minimum one. An \textit{internal} node is one that is not a leaf. A tree has at most one root. We observe that if a node $x$ of $T$ is the meet of a finite set $X$ of the maximal nodes of $T$, denoted by $Max(T)$, then $x$ is the meet of a subset $X'$ of $X$ with at most two nodes. 
We say that a meet-tree $T$ is \textit{ramified} if every node of $T$ is the meet of a finite set of maximal nodes of $T$.  

Let $T$ be a ramified meet-tree and $T':=T\setminus Max(T)$. A $\{0,\{-1,+1\}\}$-\textit{valuation} is a map $v :T'\to \{0,\{-1,+1\}\}$. The valuation is \textit{dense} if for every $x<y$ in $T'$, there is some $z$ with $x< z \leq y$ such that $v(z)\neq v(x)$. A ramified meet-tree densely valued by $\{0,\{-1,+1\}\}$  is called a \textit{decomposition tree}. 

Let $P$ be an $N$-free poset. We construct the decomposition tree $T=(R(P), \leq,v)$ of $P$ as follows:
The nodes of the tree $T$ are the robust modules of $P$ denoted by $R(P)$. The order $\leq$ on the nodes of the tree is reverse inclusion meaning that for two robust modules $A, B$ of $P$, $A \leq B$ if and only if $B \subseteq A$. To see that this order gives a tree, first let $A$ be a non-empty robust module of $P$. For two $C_1, C_2 \in R(P)$ with $C_1, C_2 \leq A$ we have $A \subseteq C_1$ and $A \subseteq C_2$. Since $A$ is non-empty, it follows that there exists some $x\in A$. So, $x \in C_1 \cap C_2$. This means that $C_1 \subseteq C_2$ or $C_2 \subseteq C_1$ because both $C_1$ and $C_2$ are strong. Hence, the down-set of a node of the tree $T$ is a chain. Now, suppose that $A, B$ are two robust modules which are not comparable. Let $x \in A\setminus B$ and $y \in B\setminus A$. Then $x$ and $y$ are distinct. Set $C$ to be the robust module determined by $\{x, y\}$. We must have that $C$ is comparable to $A$ and $B$. Moreover, since $x\in A$, $y \in C$ and $y \notin A$, we obtain $A \subset C$. Interchanging the roles of $x$ and $y$, we get $B \subset C$. Under reverse inclusion, $C$ is the meet of $A$ and $B$, that is $C=A \wedge B$. The robust modules of $P$ consisting of singletons are exactly the leaves of $T$. Robust modules of $P$ which are not singletons are valued with one of two values 0 and $\{-1, +1\}$. If $A$ is a non-trivial robust module of $P$, then the {\em value} $v(A)$ of $A$ is the type of $A$ that is $v(A):=\{-1,+1\}$, resp $v(A):=0$, if $t(A)$ is $\{-1,+1\}$, resp 0. It implies that if $A$ is the least strong module containing two distinct elements $x, y$, then $v(A)=\{-1,+1\}$, resp $v(A)=0$, if $x$ and $y$ are comparable, resp incomparable. Note that the map $v$ is defined on the set of nodes of $T$ which are not maximal, that is $v : T\setminus Max(T) \to \{0,\{-1,+1\}\}$.   Given a node $A\in T$ which is a robust module of $P$, the union of the robust modules properly contained in $A$ is a maximal proper strong submodule of $A$ which might be limit itself.

The following proposition implies that the valuation of $T$ is dense. 

% Proposition 4.5
\begin{proposition} [\cite{HPW}] \label{Types}
Let A, C be two non-trivial robust modules of a binary structure such that $A\subset C$ and A and C have the same non-prime type $\{\alpha, \beta\}$. Then, there is a robust module B with $A\subset B\subset C$ whose type is distinct from the type of $\{\alpha, \beta\}$.  
\end{proposition}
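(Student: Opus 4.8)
The plan is to construct $B$ in the form $B:=S_{P}(a,c)$ for a well-chosen pair $a\in A$, $c\in C\setminus A$, arranging that $a$ and $c$ are \emph{incomparable} in $P$. Granting such a pair, $B$ is a non-trivial robust module, being the least strong module containing the two distinct elements $a$ and $c$, and by the description of the value of such a module recorded just before the statement (namely $v=\{-1,+1\}$ or $0$ according as the two generators are comparable or incomparable) we get $v(B)=0$, which is distinct from the type $\{\alpha,\beta\}$ (which in the poset setting is $\{-1,+1\}$). Moreover $A\subsetneq B\subsetneq C$: the strong modules $B$ and $A$ meet in $a$, hence are comparable under inclusion, and $c\in B\setminus A$ forces $A\subsetneq B$; likewise $B\subseteq C$ since $C$ is a strong module containing $a$ and $c$, while $B\neq C$ because otherwise $v(C)=v(B)=0\neq\{\alpha,\beta\}$. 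So the proof reduces entirely to producing an incomparable pair $a\in A$, $c\in C\setminus A$.

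To find the pair I would first note that, being a robust module of $P$, $A$ is in particular a module of $C$; hence for each $c\in C\setminus A$ the value $d(c,y)$ is independent of $y\in A$, so $c$ is either incomparable to every element of $A$ or comparable to every element of $A$. If some $c\in C\setminus A$ is incomparable to all of $A$ we are done by the previous paragraph. The whole difficulty is thus to rule out the remaining possibility, that every $c\in C\setminus A$ is comparable to $A$. Assume this. Then $C\setminus A=C^{-}\sqcup C^{+}$, where $C^{-}$, resp.\ $C^{+}$, is the set of elements lying below, resp.\ above, all of $A$; fixing any $a_{0}\in A$ and using transitivity gives $C^{-}\leq A\leq C^{+}$ and $C^{-}\leq C^{+}$, so $C$ is the linear sum $C^{-}+A+C^{+}$ (in that order), with at least one of $C^{-},C^{+}$ non-empty since $A\subsetneq C$. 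By symmetry (replacing $P$ by its order-dual) we may assume $C^{+}\neq\emptyset$.

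To reach a contradiction I would exhibit a module of $C$ that straddles $A$. Since $A$ is robust of non-prime type it is the linear sum of its $\geq 2$ components $A_{v}$, $v\in L_{A}$, so the chain $L_{A}$ has a proper non-empty final segment $I$; then $U:=\bigcup_{v\in I}A_{v}$ is a module of $A$ with $A\setminus U\neq\emptyset$ and with every element of $A\setminus U$ below every element of $U$. Put $N:=U\cup C^{+}$. Every element of $C\setminus N=C^{-}\cup(A\setminus U)$ lies below all of $N$: an element of $C^{-}$ is below $A\supseteq U$ and below $C^{+}$, and an element of $A\setminus U$ is below $U$ and, lying in $A$, below $C^{+}$. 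Hence $N$ is a module of $C$, and so a module of $P$. But $N$ meets $A$ (it contains $U$), is not contained in $A$ (it contains $C^{+}\neq\emptyset$), and does not contain $A$ (it misses $A\setminus U\neq\emptyset$); thus $N$ is incomparable with $A$ under inclusion while meeting $A$, contradicting the fact that the robust module $A$ is strong. (When $C^{+}=\emptyset$ one argues identically with a proper non-empty initial segment of $A$ in place of $U$ and with $C^{-}$ in place of $C^{+}$.) This contradiction forces the existence of an incomparable pair, hence of the required $B$.

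The only real obstacle is this second case -- showing that it cannot happen that every element of $C\setminus A$ is comparable to $A$. The first case is a bookkeeping check once the right candidate for $B$ is identified, and what makes the straddling construction work is precisely that $A$, having non-prime type $\{\alpha,\beta\}$, carries a genuine linear decomposition into $\geq 2$ components, which is what supplies the proper final (or initial) segment to glue onto $C^{+}$ (or $C^{-}$). For a general binary structure one simply replaces ``incomparable to $A$'' by ``$d$-related to $A$ by a value outside $\{\alpha,\beta\}$'' and the same dichotomy and argument go through; in the $NE$-free setting the ``prime'' alternative for the generated module $S_{\mathbb{B}}(a,c)$ is excluded by Kelly's theorem, so only the two cases considered can arise.
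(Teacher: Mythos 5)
Your proof is essentially correct, but note that the paper itself gives no argument for this proposition: it is quoted from \cite{HPW} without proof, so there is nothing internal to compare against, and what you supply is a self-contained proof of the instance the paper actually needs (posets, hence only the types $0$ and $\{-1,+1\}$, the prime alternative being excluded by Kelly's theorem). Both halves of your argument check out: $B:=S_{\mathbb{B}}(a,c)$ for an off-type pair $a\in A$, $c\in C\setminus A$ is robust, lies strictly between $A$ and $C$ (the inclusions follow from strongness exactly as you say), and has the other value; and when no such pair exists, $N=U\cup C^{+}$ is indeed a module of the whole structure (being a module of the module $C$) that meets, but is incomparable with, the strong module $A$. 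The one thin spot is the constant case $\alpha=\beta=0$, which Corollary \ref{Dense} also needs: there the bad alternative is that every $c\in C\setminus A$ is \emph{incomparable} to $A$, so there is no $C^{+}$ to glue onto and no final segment to take; the straddling module must instead be $N:=U\cup(C\setminus A)$ with $U$ a proper non-empty union of components of the direct sum $A$. Your closing claim that ``the same dichotomy and argument go through'' is true in spirit, and the adaptation is routine, but the construction is not literally the same and should be written out. Your remark about general binary structures is only a sketch (there $S_{\mathbb{B}}(a,c)$ may be prime, and $d(a,c)$ no longer determines $d(c,a)$), but since ``prime'' is itself a type distinct from $\{\alpha,\beta\}$, that case does not threaten the argument the paper relies on.
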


% Corollary 4.6
\begin{corollary} \label{Dense}
Let P be an NE-free poset and T its decomposition tree. The valuation v of T is dense. 
\end{corollary}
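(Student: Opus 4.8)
The plan is to derive density of the valuation directly from Proposition \ref{Types}. Recall that by definition the valuation $v$ of $T$ is dense if for every $x < y$ in $T' = T\setminus Max(T)$ there is some $z$ with $x < z \leq y$ and $v(z)\neq v(x)$. Translating through the order of the decomposition tree — reverse inclusion of robust modules — two nodes $A = x$ and $C = y$ with $x < y$ correspond to non-trivial robust modules $A \supsetneq C$ of $P$, and the claim $v(z) \neq v(x)$ for some $x < z \le y$ becomes: there is a robust module $B$ with $C \subseteq B \subsetneq A$ and $v(B)\neq v(A)$.

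First I would reduce to the nontrivial case: if $v(C) \neq v(A)$ already, take $z = y$ (i.e. $B = C$) and we are done. So assume $v(A) = v(C)$. Since every non-trivial robust module of an $NE$-free poset has type either $0$ or $\{-1,+1\}$ (as established at the end of Section \ref{Modules} via Kelly's theorem, ruling out the prime case), this common type is a non-prime type $\{\alpha,\beta\}$ in the notation of Proposition \ref{Types} — concretely either the "antichain" type $0$ or the "chain" type $\{-1,+1\}$. Now I would invoke Proposition \ref{Types} with $A \subsetneq C$ there playing the role of $C \subsetneq A$ here: it yields a robust module $B$ with $C \subsetneq B \subsetneq A$ whose type differs from $\{\alpha,\beta\}$, hence $v(B) \neq v(A) = v(C)$. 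Setting $z := B$ gives a node with $x < z < y$ and $v(z)\neq v(x)$, as required.

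The only real content beyond this routine unwinding is making sure the hypotheses of Proposition \ref{Types} are met, namely that the common type is genuinely non-prime; this is exactly what the discussion following Theorem \ref{Primeconstantlinear} guarantees for $NE$-free posets, so there is no obstacle here — the main (and minor) care point is simply keeping the direction of the order correct when passing between "reverse inclusion in $T$" and "inclusion of modules in $P$."
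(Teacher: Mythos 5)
Your proof is correct and follows essentially the same route as the paper: reduce to the case where the two nodes have equal value, observe that the common type is non-prime (type $0$ or $\{-1,+1\}$, the prime case being excluded by Kelly's theorem), and apply Proposition \ref{Types} to produce an intermediate robust module of different type. Your extra care in translating between inclusion of modules and the reverse-inclusion order on $T$ is welcome but does not change the substance of the argument.
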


\begin{proof}
Recall that for each non-trivial robust module $A$ of $P$, we have either $t(A):=0$, or $t(A):=\{-1,+1\}$. Now, let $A\subset C$ be a non-trivial robust modules of $P$. If $v(A)\neq v(C)$, then by setting $B:=C$, the result follows. Now assume that $v(A)=v(C)$. Then, both $\mathbb{B}_{\upharpoonright A/\equiv_A}$ and $\mathbb{B}_{\upharpoonright C/\equiv_C}$ are either constant or linear. Therefore, $A, C$ have the same non-prime type $\{\alpha,\beta\}$ ($\alpha=\beta=0$ or $\alpha=-1, \beta=+1$). By Proposition \ref{Types}, there exists a robust module $B$ with $A\subset B\subset C$ whose type is distinct from the type $\{\alpha,\beta\}$, that is $t(B)\neq t(A), t(C)$. This implies that $v(B)\neq v(A)$. Therefore, the valuation $v$ is dense.
\end{proof}

\section{Structure of $N$-Free Posets} \label{StructureNE}

In this section we classify $N$-free posets and show that each $N$-free poset is either a direct sum or a linear sum of $N$-free posets or a sum over a labelled chain with no least element. 
%Finite $N$-free posets have a simple structure. They can be obtained from the one vertex poset by substituting the elements by two-element chains or antichains (\cite{HM}). These two operations are also used to represent some infinite $N$-free poset such as an infinite chain whose elements are replaced by two-element antichains. However, an infinite $N$-free poset may need sum over a labelled chain. 
Indeed, all the three operations are based on substituting smaller posets for three sorts of posets: chains, antichains and posets obtained from labelled chains. A \textit{cograph} is a graph with no induced subgraph isomorphic to a path $P_4$ with four vertices  \cite{J}.

% Proposition 5.1
\begin{proposition} \label{Subcograph}
Let $G=K[H_v/v : v\in K]$ be a graph substitution. $G$ is a cograph if and only if its context graph and graph blocks are cographs. 
\end{proposition}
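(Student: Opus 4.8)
The plan is to deduce both directions from two facts: each graph block $H_v$ sits inside $G$ as an induced subgraph whose vertex set is a module of $G$, and the path $P_4$ on four vertices is a \emph{prime} graph, i.e.\ its only modules are $\emptyset$, the singletons and the whole vertex set.

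For the forward direction, suppose $G$ is a cograph. By clause 2(a) of Definition~\ref{Graphsub}, two vertices lying in the same block $H_v$ are adjacent in $G$ exactly when they are adjacent in $H_v$, so $G[V(H_v)]=H_v$; thus each $H_v$ is an induced subgraph of $G$. Likewise, picking a vertex $x_v\in H_v$ for every $v\in K$, clause 2(b) gives $x_ux_v\in E(G)\iff uv\in E(K)$ for $u\neq v$, so $v\mapsto x_v$ is an isomorphism of $K$ onto the induced subgraph $G[\{x_v:v\in K\}]$. Since an induced subgraph of a cograph is again a cograph (an induced $P_4$ of the subgraph is an induced $P_4$ of $G$), it follows that $K$ and all the $H_v$ are cographs.

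For the converse, assume $K$ and every $H_v$ are cographs but, for contradiction, that $G$ has an induced $P_4$ on $\{a,b,c,d\}$ realising the path $a\,b\,c\,d$. First I would check that $V(H_v)$ is a module of $G$ for each $v$: if $x\in H_u$ with $u\neq v$ and $y,y'\in H_v$, then by clause 2(b) both $xy\in E(G)$ and $xy'\in E(G)$ hold precisely when $uv\in E(K)$, so $x$ is joined to all of $V(H_v)$ or to none of it. Since the intersection of a module with any set of vertices is a module of the corresponding induced subgraph, $V(H_v)\cap\{a,b,c,d\}$ is a module of the $P_4$, hence of size $0$, $1$ or $4$ by primeness of $P_4$. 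If some block contains all of $\{a,b,c,d\}$, then $P_4$ is an induced subgraph of $G[V(H_v)]=H_v$, contradicting that $H_v$ is a cograph. Otherwise $a,b,c,d$ lie in four distinct blocks $H_{v_a},H_{v_b},H_{v_c},H_{v_d}$, and clause 2(b) makes $a\mapsto v_a,\ b\mapsto v_b,\ c\mapsto v_c,\ d\mapsto v_d$ an isomorphism of $G[\{a,b,c,d\}]$ onto $K[\{v_a,v_b,v_c,v_d\}]$, producing an induced $P_4$ in $K$ --- again a contradiction. Hence $G$ has no induced $P_4$, i.e.\ $G$ is a cograph.

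The only point requiring a small verification, and hence the only ``obstacle'' (an entirely routine one), is that $P_4$ is prime. I would settle this by direct inspection of the path $a\,b\,c\,d$: a module $M$ with $|M|\in\{2,3\}$ would force every vertex outside $M$ to be adjacent either to all of $M$ or to none of it, which fails in each case --- for example $M=\{a,b\}$ is ruled out because $c$ is adjacent to $b$ but not to $a$, and $M=\{a,b,c\}$ is ruled out because $d$ is adjacent to $c$ but not to $a$; the remaining possibilities for $M$ are handled the same way. With this in hand, everything else is bookkeeping with Definition~\ref{Graphsub}.
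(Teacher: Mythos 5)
Your proof is correct. The forward direction is the same as the paper's: blocks and a transversal of the blocks are induced subgraphs of $G$ isomorphic to the $H_v$ and to $K$ respectively, and induced subgraphs of cographs are cographs. In the converse you take a genuinely different (and more conceptual) route: you observe that each $V(H_v)$ is a module of $G$, that the trace of a module on a vertex subset is a module of the induced subgraph, and that $P_4$ is prime, so each block meets the induced $P_4$ in $0$, $1$ or $4$ vertices; the size-$4$ case puts a $P_4$ inside a block and the remaining case transfers the $P_4$ to $K$. The paper instead excludes intersections of size $3$ and $2$ by bare-hands arguments (a degree-$3$ vertex in the first case, a $4$-cycle in the second), which amounts to verifying by hand exactly the instances of non-modularity that your primeness check covers. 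Your version buys a cleaner statement that generalises immediately (substitution preserves the class of graphs omitting any fixed prime induced subgraph) and fits the modular-decomposition machinery developed in Section 4 of the paper, at the cost of having to verify primeness of $P_4$ — which, as you note, is the same elementary inspection in different packaging. Both arguments are sound.
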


\begin{proof}
Let $P$ be a copy  of $P_4$. 

($\Rightarrow$) If $G$ is a cograph, then it is clear that all its graph blocks are cographs. If $K$ embeds $P$, then let $v_1, v_2, v_3, v_4$ be the vertices of $P$ in $K$. For every $1\leq i \leq 4$, pick an element $x_i\in H_{v_i}$. Then, the $x_i$ form a copy of $P_4$ in $G$, a contradiction.  

($\Leftarrow$) For the sake of a contradiction, assume that $P$ is embedded in $G$. We show that no graph block $H_v$ contains more than one element of $P$. By assumption, no $H_v$ embeds $P$. Suppose $|P\cap H_v|=3$ and let $a, b, c \in H_v \cap P$. Then $d \in P$ belongs to some $H_u$ where $u \neq v$. Without loss of generality assume that the vertex $d$ is connected to $c$. This means that $uv\in E(K)$. Consequently, $d$ is connected to $a$ and $b$. But, then $a, b, c$ and $d$ do not form a path as $d$ has degree 3. Now suppose $|P\cap H_v|=2$ and let $a,b \in P\cap H_v$. Suppose $b$ and $c$ are adjacent where $c \in H_u$ with $u \neq v$. It follows that $a$ and $c$ are adjacent as well. The vertex $d$ cannot be adjacent to $c$ as then $deg(c)=3$. Therefore, $d$ must be adjacent to  both $a$ and $b$. In this case the vertices $a, b, c$ and $d$ form a cycle. Thus, every vertex of $P$ belongs to a unique $H_v$.  It follows that $K$ itself embeds $P$, a contradiction.   
\end{proof} 

The only partial order $P$ on $P_4$ with $CG(P)=P_4$ results in an `$N$'. On the other hand, the comparability graph of an `$N$' is $P_4$. So, we get the following result. 

% Corollary 5.2
\begin{corollary} \label{SubNEposet}
Let $P=Q[P_v/v : v\in Q]$ be a poset substitution. $P$ is $N$-free if and only if its context poset and poset blocks are $N$-free. 
\end{corollary}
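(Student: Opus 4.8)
The plan is to derive Corollary~\ref{SubNEposet} directly from Proposition~\ref{Subcograph} together with the two observations recorded just before the statement: the only partial order $P$ on four elements with $CG(P)=P_4$ is (up to isomorphism and duality) an `$N$', and conversely $CG(N)=P_4$. First I would pass to comparability graphs. Given the poset substitution $P=Q[P_v/v : v\in Q]$, I claim that its comparability graph is exactly the graph substitution $CG(P)=CG(Q)[CG(P_v)/v : v\in Q]$. This is immediate from unwinding Definition~\ref{PosetSub} and Definition~\ref{Graphsub}: two distinct elements $x,y$ are comparable in $P$ iff they lie in a common block $P_v$ and are comparable there, or they lie in blocks $P_u,P_v$ with $u\neq v$ and $u,v$ comparable in $Q$; the first case is the ``$xy\in E(H_v)$'' clause and the second is the ``$uv\in E(K)$'' clause, with $H_v:=CG(P_v)$ and $K:=CG(Q)$. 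So the comparability-graph operation commutes with substitution.

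Next I would translate $NE$-freeness into the cograph property. Since the only partial order on $P_4$ with comparability graph $P_4$ is an `$N$', and $CG(N)=P_4$, a poset is $NE$-free if and only if its comparability graph is $P_4$-free, i.e. a cograph. (Concretely: if $CG(R)$ embeds $P_4$ on vertices $a,b,c,d$, the induced subposet of $R$ on $\{a,b,c,d\}$ has comparability graph $P_4$, hence is an `$N$'; conversely an embedded `$N$' gives an induced $P_4$ in $CG(R)$.) Applying this to $P$, to $Q$, and to each $P_v$ reduces the statement to: $CG(P)$ is a cograph iff $CG(Q)$ and all $CG(P_v)$ are cographs. By the previous paragraph $CG(P)=CG(Q)[CG(P_v)/v:v\in Q]$, so this is exactly Proposition~\ref{Subcograph}. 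Chaining the equivalences in both directions gives the corollary.

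I do not expect any serious obstacle here; the corollary is essentially a dictionary translation of Proposition~\ref{Subcograph} across the functor $P\mapsto CG(P)$. The only point requiring a little care is the verification that $CG(Q[P_v/v])=CG(Q)[CG(P_v)/v]$ as graphs on the same vertex set — in particular checking that comparability in $Q$ between $u$ and $v$ (as opposed to a specific direction $u\leq_Q v$) is what matches adjacency of $u,v$ in $CG(Q)$, and that there is no interaction between the ``within a block'' edges and the ``between blocks'' edges that could spuriously create or destroy a $P_4$. Both are handled by the case analysis already carried out in the proof of Proposition~\ref{Subcograph}, so once the identification of the two graphs is in hand, the proof is one line.
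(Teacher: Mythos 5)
Your proposal is correct and is exactly the argument the paper intends: the two sentences preceding the corollary record that $N$ is the unique partial order with comparability graph $P_4$ and that $CG(N)=P_4$, and the corollary is then deduced from Proposition \ref{Subcograph} by passing to comparability graphs, using that $CG$ commutes with substitution. Your write-up just makes explicit the details the paper leaves implicit.
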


$N$-free posets which are not direct or linear sums require poset labelled sums. 
Let $(I,\leq)$ be a chain and $r$ a map from $I$ to $\{-1, 0, +1\}$. Let $Q^I_r=(I,\leq')$ be defined as follows: for $i<j$, $i\perp j$ if $r(i)=0$, $i<'j$ if $r(i)=-1$ and $j<'i$ if $r(i)=+1$. We prove in the following lemma that $Q^I_r$ is an $N$-free poset. Let $\{(P_i,\leq_i)\}_{i\in I}$ be a disjoint family of non-empty posets. We call the poset substitution of the $P_i$ for $Q^I_r$ i.e. $P=Q^I_r[P_i/i:i\in I]$, a {\em poset labelled sum} of the $P_i$ and denote it by $P=\sum_{i\in I} P_i$. The $P_i$ are called the {\em summands} of $P$. We may view such a sum as the poset associated to the {\em  labelled chain} $C:=(I,\leq,\ell)$ where $\ell(i)=(P_i,r(i))$ and denote it by $P=\sum C$. 

% Lemma 5.3
\begin{lemma} 
$Q^I_r$ is an NE-free poset. Hence, a poset labelled sum $\sum_{i\in I}P_i$ is NE-free if and only if its summands are NE-free. 
\end{lemma}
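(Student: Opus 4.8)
The plan is in two parts: first check that $Q^I_r$ really is a partial order, then show it does not embed $N$; the ``if and only if'' for poset labelled sums will then drop out of Corollary~\ref{SubNEposet}.

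For the first part, the one observation that does all the work is this: if $i,j\in I$ are distinct with $i<j$ in the chain order of $I$, then the $\leq'$-relationship between $i$ and $j$ is determined entirely by $r(i)$ --- they are incomparable if $r(i)=0$, $i<'j$ if $r(i)=-1$, and $j<'i$ if $r(i)=+1$. Reflexivity of $\leq'$ is immediate, and antisymmetry is then clear (if $i<j$ in $I$ then $i<'j$ forces $r(i)=-1$ while $j<'i$ forces $r(i)=+1$). For transitivity I would take $i<'j$ and $j<'k$ with $i,j,k$ pairwise distinct and split into cases according to which of the three is $I$-least (and, within each, the order of the remaining two); in every case the observation pins down all the relevant labels and one simply reads off $i<'k$. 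The subcase in which $j$ is $I$-least is vacuous, since it would require $r(j)=-1$ and $r(j)=+1$ simultaneously; the rest are routine.

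For $NE$-freeness, suppose towards a contradiction that $a,b,c,d$ form a copy of $N$ inside $Q^I_r$, say with $a<b$, $c<b$, $c<d$ and incomparabilities $a\perp c$, $b\perp d$, $a\perp d$. Let $m$ be the $I$-least of $a,b,c,d$. Since $m$ precedes each of the other three in the chain $I$, the observation above says that the $\leq'$-behaviour of $m$ against all three of them is governed by the single value $r(m)$: if $r(m)=-1$ then $m$ lies $\leq'$-below all three, if $r(m)=+1$ then $m$ lies $\leq'$-above all three, and if $r(m)=0$ then $m$ is $\leq'$-incomparable to all three. But in $N$ every vertex is comparable to at least one other vertex (the relations $a<b$, $c<b$, $c<d$ already involve all four vertices) and incomparable to at least one other vertex ($a\perp c$, $b\perp d$ together with $c\perp a$ and $d\perp b$ involve all four); in particular no vertex of $N$ is comparable to its three companions, and none is incomparable to all three of them. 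Each of the three possible values of $r(m)$ therefore yields a contradiction, so $Q^I_r$ embeds no copy of $N$.

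Finally, $\sum_{i\in I}P_i=Q^I_r[P_i/i:i\in I]$ is a poset substitution whose context poset $Q^I_r$ is $NE$-free by what we have just shown, so Corollary~\ref{SubNEposet} gives at once that the sum is $NE$-free if and only if every summand $P_i$ is $NE$-free. I do not foresee a genuine obstacle in this lemma; the only mildly fiddly point is carrying out the transitivity case analysis for $\leq'$ completely, and even that is short once the dependence of each comparison on a single label value is isolated.
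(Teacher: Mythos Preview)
Your proposal is correct and follows essentially the same argument as the paper: both prove transitivity by casing on the $I$-ordering of the three elements (noting that the case where the middle element is $I$-least is vacuous), both establish $NE$-freeness by examining the $I$-least element $m$ of a hypothetical copy of $N$ and using that $r(m)$ forces $m$ to behave uniformly against the other three vertices, and both conclude via Corollary~\ref{SubNEposet}. The only cosmetic difference is that the paper immediately rules out $r(m)=0$ (since $m$ is comparable to something in $N$) and then derives the contradiction from $r(m)=\pm 1$, whereas you handle all three label values in parallel.
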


\begin{proof}
In order to prove that $Q^I_r=(I,\leq')$ is a poset, it suffices to show that $\leq'$ is transitive. Let $i, j, k\in I$ and assume that $i <' j$ and $j <' k$. If $i<j<k$, then $r(i)=-1$ and if $k<j<i$, then $r(k)=+1$. In both cases we get $i <' k$. Now, assume that $i<j$ and $k<j$. Since $i <' j$, we have $r(i)=-1$. Further, since $(I,\leq)$ is a chain, we have $i<k$ or $k<i$. In the first case, we get $i <' k$. In the second case, we have $k<i<j$ and regarding $j <' k$ we get $r(k)=+1$. Thus, $i <' k$. Note that the case $j<i$, $j<k$ is not possible because then we get $r(j)=+1$ and $r(j)=-1$, a contradiction. 

Now, suppose that a copy of $N$ embeds in $Q^I_r$. Let $i\in N$ be such that $i\leq j$ for each $j\in N$. Such an element exists because $(I,\leq)$ is a chain. Since $i$ is comparable w.r.t $\leq'$ to at least one $j\neq i$ in $N$, we have $r(i)=-1$ or $r(i)=+1$. But then $i$ is comparable w.r.t $\leq'$ to each element of $N$, a contradiction. Thus, $Q^I_r$ is $N$-free. By Corollary \ref{SubNEposet}, it implies that $P=\sum_{i\in I} P_i$ is $N$-free if and only if each $P_i$ is $N$-free.   
\end{proof}

% Section 5.1
%%%% Classification
\subsection{Classification} \label{Classification} 

The classification of $N$-free posets is based on the structure of their decomposition tree. Let $I$ be a chain and $r:I\to\{-1,0,+1\}$ a map. We set $I_0:=\{i\in I : r(i)=0\}$ and $I_1:=\{i\in I : |r(i)|= 1\}$. In order to classify $N$-free posets, we will need poset labelled sums indexed by infinite chains $C=(I,\leq,r)$ with no least element where $r$ is a mapping from $I$ to $\{-1, 0 , +1\}$ such that both $I_0$ and $I_1$ are coinitial in $I$. We aim to show that if $T$ has no least element, then $P$ is a poset labelled sum. The crucial ingredient that we will use is the following (see \cite{BD}, page 1747 Lemma 2.2).

% Lemma 5.4
\begin{lemma} [\cite{BD}] \label{BD}
Let $\mathbb{B}=(V,d)$ be a binary structure. The collection of robust modules of $\mathbb{B}$ containing a given element is a chain covering the elements of $V$ (i.e. every element of V belongs to some member of the collection). 
\end{lemma}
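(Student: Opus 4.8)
The plan is to fix an element $x\in V$ and analyze the family $\mathcal{R}_x$ of robust modules containing $x$. First I would establish that $\mathcal{R}_x$ is totally ordered by inclusion. Since every robust module is by definition a strong module (a singleton is strong, and the least strong module containing two elements is strong), any two members $A, B$ of $\mathcal{R}_x$ both contain $x$ and hence meet each other; by definition of a strong module, comparability to any module it meets forces $A\subseteq B$ or $B\subseteq A$. So $\mathcal{R}_x$ is a chain under inclusion. (This is precisely the argument already used in the construction of the decomposition tree, showing the down-set of a node is a chain — I would just invoke or replicate that reasoning.)

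Next I would show the chain $\mathcal{R}_x$ \emph{covers} $V$, i.e. every $y\in V$ lies in some member. If $y=x$, the singleton $\{x\}$ works. If $y\neq x$, consider the set $S_{\mathbb{B}}(x,y)$, the intersection of all strong modules containing $\{x,y\}$; by Lemma \ref{Intersectionunionstrong} this is a strong module, and by the definition immediately following that lemma it is in fact the \emph{robust} module determined by $\{x,y\}$ (it is the smallest strong module containing two distinct elements). This robust module contains $x$, hence belongs to $\mathcal{R}_x$, and it contains $y$. Therefore every element of $V$ is covered. Combining the two parts gives the statement.

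The step I expect to need the most care is making sure the definitions line up: specifically that $S_{\mathbb{B}}(x,y)$ genuinely qualifies as robust in the sense defined (a singleton \emph{or} the smallest strong module containing two distinct elements), since $x\neq y$ this is exactly the second clause, so no subtlety remains once that is spelled out. A secondary point worth a sentence is the degenerate case $|V|=1$, where $\mathcal{R}_x=\{\{x\}\}$ trivially covers $V$. Everything else is a direct unwinding of the strong/robust module machinery set up earlier, so the proof should be short — essentially two paragraphs, one for "chain" and one for "covering" — and I would not expect any real obstacle, since the cited source (\cite{BD}, Lemma 2.2) presumably contains the same two-line argument.
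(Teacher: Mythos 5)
Your proof is correct. The paper does not actually prove this lemma --- it is quoted from \cite{BD} (Lemma 2.2 there) --- but your two-step argument (robust modules are strong, two strong modules sharing the point $x$ must be nested; and for any $y$ the module $S_{\mathbb{B}}(x,y)$ is robust, contains $x$, and covers $y$) is exactly the standard verification, and it is the same reasoning the paper itself deploys when it builds the decomposition tree in Section \ref{DTNE}.
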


%\begin{proof}
%Let $x\in V$ and $\mathfrak{C}$ be the collection of robust modules of $\mathbb{B}$ containing $x$. Since the elements of $\mathfrak{C}$ are strong modules mutually meet each other, $\mathfrak{C}$ is a chain under reverse inclusion. Let $y\in V$ be arbitrary and consider the least strong module $A$ of $\mathbb{B}$ containing $x, y$. Since $x\in A$, we have $A\in\mathfrak{C}$. So, $\mathfrak{C}$ covers the elements of $V$. 
%\end{proof}

Let $P$ be an $N$-free poset. If $T$ is the decomposition tree of $P$, then the collection $\mathfrak{C}_x$ of robust modules of $P$ containing a given element $x\in P$ corresponds to a maximal chain in $T$ whose greatest element is $\{x\}$. Assume that $\mathfrak{C}_x$ has no least element and take an arbitrary $y\in P$. Let $\mathfrak{C}_y$ be the collection of robust modules of $P$ containing $y$. Since $\mathfrak{C}_y$ and $\mathfrak{C}_x$ meet at $\{x\}\wedge \{y\}$ in $T$, it follows that $\mathfrak{C}_y$ does not have a least element as well.

% Lemma 5.5
\begin{lemma} \label{Posetreduced}
Let P be an NE-free poset such that for some (every) $x\in P$, the chain of robust modules of $P$ containing $x$ has no least element. Then P is the sum $\sum C$ of a labelled chain $C=(I,\leq,\ell)$ where $(I,\leq)$ is an infinite chain with no least element and for each $i\in I$, $\ell(i)=(P_i,r(i))$ where $P_i$ is an NE-free poset and r is a mapping from I to $\{-1,0,+1\}$ such that $I_0=\{i\in I : r(i)=0\}$ and $I_1=\{i\in I : |r(i)|=1\}$ are coinitial in $I$. 
\end{lemma}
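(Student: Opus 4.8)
The goal is to exhibit $P$ as a poset labelled sum over a chain with no least element, using the decomposition tree $T$ and the given hypothesis. First I would fix an element $x\in P$ and let $\mathfrak{C}_x$ be the chain of robust modules of $P$ containing $x$; by Lemma \ref{BD} this chain covers $V$, and by hypothesis it has no least element. The idea is to use $\mathfrak{C}_x$ as the blueprint for the index chain $(I,\leq)$ and to read off the labels from the valuation $v$ of $T$ along this chain. Concretely, for a node $A\in\mathfrak{C}_x$ with $A\neq\{x\}$, the element $x$ lies in exactly one component of $A$ (one equivalence class of $\equiv_A$), and the remaining components of $A$ sit either below $x$'s component, above it, or incomparable to it, according to whether $v(A)=\{-1,+1\}$ or $v(A)=0$ and the position of $x$'s class in the Gallai quotient. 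This is what lets us peel off, at each ``step down'' along $\mathfrak{C}_x$, a chunk of $P$ with a definite sign relation to the part containing $x$.

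The main structural step is to organize these peeled-off chunks into a single linearly (or partially, via the signs) ordered family indexed by a chain $I$. I would proceed as follows. Because $\mathfrak{C}_x$ is a chain of strong modules with no least element, for any $y\neq x$ there is a smallest member $A_y\in\mathfrak{C}_x$ not containing $y$ — wait, more carefully: since $\mathfrak{C}_x$ covers $V$, some member contains $y$; the members of $\mathfrak{C}_x$ containing $y$ form a final segment of $\mathfrak{C}_x$, and $\{x\}\wedge\{y\}$ in $T$ is the least member of $\mathfrak{C}_x$ containing $y$, call it $B_y$. Then $B_y$ is robust with $v(B_y)\in\{0,\{-1,+1\}\}$, and $x,y$ lie in distinct components of $B_y$; the relation between $x$ and $y$ in $P$ is determined by $v(B_y)$ together with the order of their two components in the Gallai quotient $B_y/\equiv_{B_y}$. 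I would use this to define, for each ``level'' of $\mathfrak{C}_x$, an index point, and assign to it the union of components that get attached at that level, equipped with the appropriate label $r(i)\in\{-1,0,+1\}$ coming from $v(B_y)$ and the side on which those components sit. One must check that distinct levels give a genuine chain $(I,\leq)$ with no least element (inherited from $\mathfrak{C}_x$ having no least element), that the summands $P_i$ are $NE$-free (immediate from Corollary \ref{SubNEposet}, since they are induced subposets of $P$), and that the reconstructed poset $\sum_{i\in I}P_i$ equals $P$ as an ordered set.

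The coinitiality conditions — that both $I_0=\{i: r(i)=0\}$ and $I_1=\{i:|r(i)|=1\}$ are coinitial in $I$ — is where the density of the valuation $v$ (Corollary \ref{Dense}) enters, and I expect this to be the main obstacle. The point is that along the maximal chain $\mathfrak{C}_x$ in $T$, the valuation cannot be eventually constant near the (missing) bottom: if all nodes of $\mathfrak{C}_x$ below some point had value $0$, or all had value $\{-1,+1\}$, then by density and the structure of the Gallai quotients one would either contradict robustness or force a least element. So I would argue: if the labels $r(i)$ were eventually all $0$ as $i$ decreases, then cofinally-small members of $\mathfrak{C}_x$ would all have $v=0$, and then by Proposition \ref{Types}/Corollary \ref{Dense} two nested such modules with the same type $\{0,0\}$ would have an intermediate module of different type, which is also a member of $\mathfrak{C}_x$ (being strong and containing $x$) — contradiction; symmetrically for the case all $r(i)$ eventually $\pm1$. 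Hence both $I_0$ and $I_1$ are coinitial. The bookkeeping to make the index set $I$ precise (it is essentially $\mathfrak{C}_x$ minus its top, or a set of representatives of the ``gaps''), and to verify that the comparabilities reconstruct exactly, is routine but needs care; the conceptual content is entirely in the covering Lemma \ref{BD}, the trichotomy of types (constant/linear only, no prime, by Kelly's theorem), and the density of $v$.
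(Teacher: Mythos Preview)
Your approach is correct and essentially identical to the paper's: both fix $x$, walk down the chain $\mathfrak{C}_x$ of robust modules containing $x$, peel off at each module $A$ the components of $A/\equiv_A$ not containing $x$, read the label from $v(A)$, and invoke density of $v$ (Corollary~\ref{Dense}) along $\mathfrak{C}_x$ for the coinitiality of $I_0$ and $I_1$. The one point where your sketch is slightly imprecise is the phrase ``for each level of $\mathfrak{C}_x$, an index point'': when $v(A)=\{-1,+1\}$ the class of $x$ may sit strictly in the middle of the linear quotient $A/\equiv_A$, so a single module $A$ can contribute \emph{two} index points $A^-<A^+$ with labels $-1$ and $+1$ respectively (the paper makes this split explicit); once you build that into your ``bookkeeping'' the reconstruction of $P$ as $\sum C$ goes through exactly as you outline.
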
 

\begin{proof} 
Let $P$ satisfy the condition of the lemma and $\mathfrak{C}$ be the chain of robust modules of $P$ containing $x$ under reverse inclusion. Let $v$ be the valuation of the decomposition tree $T$ of $P$. Let $A\in\mathfrak{C}$ be given. If $v(A)=0$, then let $P_A$ be the poset obtained by restricting $P$ to $A \setminus [x]_{\equiv_A}$ and if $v(A)=\{-1, +1\}$, then let $P_{A^-}$, resp $P_{A^+}$, be the poset obtained by restricting $P$ to $\{y \in A : y\not\equiv_A x\ \text{and}\ y <_P x\}$, resp $\{y\in A : y\not\equiv_A x, \ \text{and}\ x <_P y\}$. For $A\in \mathfrak{C}$ with $v(A)=\{-1, +1\}$, replace $A$ by $A^-$ if $P_{A^+}=\emptyset$, by $A^+$ if $P_{A^-}=\emptyset$ and by the two-element chain $A^-<A^+$ if both $P_{A^-}$ and $P_{A^+}$ are non-empty. Let $I$ consist of $\{x\}$, the $A\in\mathfrak{C}$ with $v(A)=0$ and the replacements $A^-$ and $A^+$ for $A\in\mathfrak{C}$ with $v(A)=\{-1, +1\}$. For $A_1, A_2\in I$, if $A_1$, resp $A_2$, is determined by some robust module $A$, resp $B$, where $A\neq B$, then define $A_1 < A_2$ if and only if $B \subset A$ ($A <_T B$). It follows that $(I,\leq)$ is a chain with no least element because $\mathfrak{C}$ has no least element. 

Let $I$ be the chain constructed above. Define $r:I\to\{-1,0,+1\}$ by $r(A)=0$ if $v(A)=0$, $r(A^-)=-1$ and $r(A^+)=+1$. Since $\mathfrak{C}$ has no least element and $v$ is dense on the elements of $\mathfrak{C}$, it follows that $I_0$ and $I_1$ are coinitial in $I$. 

Let $I$ be the chain constructed above and for each $A\in I$ let $P_A$ be the poset obtained above. Consider $\{P_A\}_{A\in I}$ which is a family of pairwise disjoint $N$-free posets. We have $P=\bigcup_{A\in I} P_A$. Let $A, B\in I$ be given such that $A<B$. If $r(A)=0$ then $v(A)=0$ and we have $A <_T B$. So, $P_A\perp P_B$ in this case. If $r(A)=-1$, resp $r(A)=+1$, then $A$ is determined by a robust module with value $\{-1,+1\}$ and we have $P_A <_P P_B$, resp $P_B <_P P_A$. This means that $P=Q^I_r[P_A/A : A\in I]$, that is $P$ is a poset labelled sum. In other words $P=\sum C$ where $C:=(I,\leq,\ell)$ is a labelled chain with $\ell(A)=(P_A,r(A))$. Note that $(I,\leq)$ is an infinite chain with no least element and $r$ is a mapping from $I$ to $\{-1,0,+1\}$ such that $I_0$ and $I_1$ are coinitial in $I$. 
\end{proof}

% Definition 5.6
\begin{definition} \label{CCGC}
We say that a poset $P$ has {\em property CCGC} if the complement of the comparability graph of $P$ is connected, that is, $(CG(P))^c$ is connected. 
\end{definition}

In Subsection \ref{Linearsum}, when determining the sibling number of a linear sum $P$ of $N$-free posets, we will see that the property CCGC of the summands of $P$ plays a crucial role. We also prove that an $N$-free poset satisfying Lemma \ref{Posetreduced} has property CCGC. 

% Proposition 5.7
\begin{proposition} \label{Plsccgc}
Let P be an NE-free poset whose decomposition tree has no least element. Then P is connected and it has property CCGC. 
\end{proposition}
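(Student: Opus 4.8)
The plan is to reduce at once to the explicit description of $P$ supplied by Lemma~\ref{Posetreduced}. The hypothesis that the decomposition tree $T$ of $P$ has no least element is, via Lemma~\ref{BD}, exactly the statement that for some (equivalently every) $x\in P$ the chain $\mathfrak{C}_x$ of robust modules of $P$ containing $x$ has no least element: indeed $\mathfrak{C}_x$ is a chain covering $V$, so it has a largest element precisely when $V$ itself is a robust module, i.e. when $T$ has a root. Hence Lemma~\ref{Posetreduced} applies and we may write $P=\sum C=Q^I_r[P_i/i:i\in I]$, where $(I,\leq)$ is an infinite chain with no least element, each $P_i$ is a non-empty $NE$-free poset, and both $I_0=\{i\in I:r(i)=0\}$ and $I_1=\{i\in I:|r(i)|=1\}$ are coinitial in $I$. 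For $x\in P$ write $i_x$ for the unique index with $x\in P_{i_x}$.

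The one fact we need, read straight off the definitions of $Q^I_r$ and of poset substitution, is: for $a<b$ in $I$, if $a\in I_1$ then every element of $P_a$ is comparable in $P$ to every element of $P_b$, whereas if $a\in I_0$ then every element of $P_a$ is incomparable in $P$ to every element of $P_b$. Combined with the facts that $I$ has no least element and that $I_1$, resp.\ $I_0$, is coinitial, each half of the proposition follows by exhibiting, between any two elements, a path of length two through a common ``low'' midpoint.

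For connectedness: given distinct $x,y\in P$, since $I$ is a chain with no least element and $I_1$ is coinitial we may choose $j\in I_1$ with $j<i_x$ and $j<i_y$, and pick any $z\in P_j$ (non-empty). Because $j\in I_1$ lies below both $i_x$ and $i_y$, the element $z$ is comparable to $x$ and to $y$; moreover $z\neq x$ and $z\neq y$ as the blocks of the family are pairwise disjoint and $j\neq i_x,i_y$. Hence $z$ is adjacent in $CG(P)$ to both $x$ and $y$, giving a path between $x$ and $y$, so $P$ is connected. For property CCGC the identical argument works with $I_0$ in place of $I_1$: choose $j\in I_0$ with $j<i_x$ and $j<i_y$ and $w\in P_j$; then $w$ is incomparable in $P$ to both $x$ and $y$ and distinct from each, so $w$ is adjacent to $x$ and $y$ in $(CG(P))^c$, and hence $(CG(P))^c$ is connected, i.e. $P$ has property CCGC.

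I do not expect a genuine obstacle. The only points needing a little care are the verification above that the hypothesis of Proposition~\ref{Plsccgc} coincides with the hypothesis of Lemma~\ref{Posetreduced} (the mild work being the use of Lemma~\ref{BD} to pass between ``$T$ has no least element'' and ``$\mathfrak{C}_x$ has no least element''), and the routine observation that ``$I$ has no least element'' is precisely what allows one to step strictly below a given index before invoking coinitiality of $I_1$ or of $I_0$. The pairwise disjointness of $\{P_i\}_{i\in I}$ is what guarantees that the three vertices of each path are distinct.
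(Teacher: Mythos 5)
Your proof is correct and is in substance the same as the paper's: both arguments connect $x$ and $y$ (in $CG(P)$ for connectedness, in $(CG(P))^c$ for CCGC) through a common witness lying strictly below both, using that both label values occur coinitially because the tree has no least element and the valuation is dense. The only difference is packaging — you first pass through Lemma~\ref{Posetreduced} to get the explicit labelled-chain form $\sum C$, whereas the paper argues directly on robust modules; your version has the minor advantage of making explicit that the witness $z$ lies in a block disjoint from those of $x$ and $y$ (hence genuinely comparable, resp.\ incomparable, to both), a point the paper's ``take some $z\in B$'' glosses over since $z$ must be chosen outside the component of $B$ containing $x$ and $y$.
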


\begin{proof}
Let $T$ be the decomposition tree of $P$. By assumption $T$ has no least element. Set $G:=CG(P)$. Take $x, y\in G$ and consider the robust module $A$ determined by $x$ and $y$. If $v(A)=\{-1,+1\}$, then $x$ and $y$ are comparable meaning that $xy\in E(G)$. If $v(A)=0$, then since $T$ has no least element and $v$ is dense, we can find a robust module $B\supset A$, equivalently $B <_T A$, with $v(B)=\{-1,+1\}$. Take some $z\in B$. Then $x$ and $z$ are comparable as well as $y$ and $z$. Therefore, there is a path in $G$ connecting $x$ and $y$. Hence, $P$ is connected.

Now we prove that $G^c$ is connected. Let $x, y\in G$ be given. Let $A=S_\mathbb{B}(x,y)$ as computed for $G$. If $v(A)=0$, then $x$ and $y$ are incomparable w.r.t $\leq_P$. Thus, $xy\in E(G^c)$ meaning that in this case $x$ and $y$ are connected in $G^c$. If $v(A)=\{-1,+1\}$, then by density of $v$ and the fact that $T$ has no least element we can find a robust module $B$ of $P$ with $B<_T A$ and $v(B)=0$. Take some $z\in B$. We have $z\perp x$ and $z\perp y$. Therefore, $xz, zy\in E(G^c)$ meaning that $x$ and $y$ are connected in $G^c$. This completes the proof. 
\end{proof}

We have the following classification of $N$-free posets which will be used in determining the sibling number of an $N$-free poset.  

% Theorem 5.8
\begin{theorem} \label{Pstructure}
Let P be an NE-free poset with more than one element. Then either 
\begin{enumerate}
    \item P is a direct sum of at least two non-empty connected NE-free posets, or
    \item P is a linear sum of at least two non-empty NE-free posets with property CCGC, or
    \item P is the sum $\sum C$ of a labelled chain $C=(I,\leq,\ell)$ such that $(I,\leq)$ is an infinite chain with no first element, and each label $\ell(i)$ is the pair $(P_i,r(i))$ made of a non-empty NE-free poset $P_i$ and an element $r(i)\in\{-1,0 , +1\}$ in such a way that r is a mapping on the chain $(I,\leq)$ such that $I_0=\{i\in I : r(i)=0\}$ and $I_1=\{i\in I : |r(i)|=1\}$ are coinitial in I. 
\end{enumerate}
\end{theorem}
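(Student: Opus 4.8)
The plan is to split on whether the decomposition tree $T$ of $P$ has a least element. If $T$ has no least element, then Lemma \ref{Posetreduced} applies (note the hypothesis on the chain of robust modules containing a fixed $x$ is exactly that $T$ has no least node), giving case (3), and Proposition \ref{Plsccgc} confirms the coinitiality claims are already packaged in Lemma \ref{Posetreduced}. So the real work is the case where $T$ has a least element $A_0 = V$, the whole domain, which is then a non-trivial robust module; by the analysis at the end of Subsection \ref{Modules}, its Gallai quotient $A_0/\!\equiv_{A_0}$ is either constant (type $0$) or linear (type $\{-1,+1\}$).

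First I would handle the constant case: $v(A_0) = 0$ means $A_0/\!\equiv_{A_0}$ is an antichain, so the components $I, J$ of $A_0$ (the maximal strong submodules properly contained in $A_0$, which exist since $A_0$ is robust) are pairwise incomparable, and there are at least two of them. Writing $P = \bigoplus_{K \in A_0/\equiv_{A_0}} (P{\restriction}K)$ exhibits $P$ as a direct sum of the induced subposets on the components. Each such $P{\restriction}K$ is a module, hence has no $N$ (being an induced subposet of the $NE$-free $P$), so is $NE$-free; and I must check each is connected — this follows because $K$ is a strong module that is not further decomposable as a direct sum at its top level (if $CG(P{\restriction}K)$ were disconnected, the union of a proper non-empty set of its components would be a strong module strictly between a point and $K$ violating maximality, or more directly: a disconnected module would make some strictly larger module non-strong). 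So case (1) holds. The linear case $v(A_0) = \{-1,+1\}$ is dual: $A_0/\!\equiv_{A_0}$ is a chain on the components, so $P = +_{K} (P{\restriction}K)$ is a linear sum of at least two non-empty $NE$-free posets, and I must verify each summand has property CCGC. For this I would argue that if some component $K$ had $(CG(P{\restriction}K))^c$ disconnected, then the complement's components partition $K$ into pieces that are comparable blocks, producing a strong submodule strictly between a point and $K$ — contradicting that the components of $A_0$ are the \emph{maximal} strong modules properly contained in $A_0$, or using density of $v$ to the same effect.

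The main obstacle I anticipate is the connectedness / CCGC verification for the summands in cases (1) and (2): one needs that the top-level Gallai decomposition genuinely "uses up" all the relevant structure, so that the blocks $P{\restriction}K$ cannot themselves be split by the same operation. The cleanest route is to observe that if $A_0$ has type $0$ then each component $K$ has type $\{-1,+1\}$ or is prime or a singleton (it cannot again have type $0$, else $K$ would not be a \emph{maximal} strong proper submodule — its union with another type-$0$ sibling would be a larger strong module of the same type, contradicting the component structure, or invoke Proposition \ref{Types}/Corollary \ref{Dense}); since prime is excluded by Kelly's theorem for posets, $K$ has type $\{-1,+1\}$ or is a singleton, and in either case $CG(P{\restriction}K)$ is connected. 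Dually, if $A_0$ has type $\{-1,+1\}$ then each component has type $0$ or is a singleton, and in both cases $(CG(P{\restriction}K))^c$ is connected; moreover each $P{\restriction}K$, having no least element in its own decomposition tree or having top type $0$, falls under Proposition \ref{Plsccgc} or is trivially CCGC. Assembling these observations, together with Corollary \ref{SubNEposet} for the $NE$-freeness of the blocks, yields the three cases and completes the proof.
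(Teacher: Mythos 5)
Your proposal is correct and follows essentially the same route as the paper: split on whether the decomposition tree has a least element, apply Lemma \ref{Posetreduced} when it does not, and otherwise read off cases (1) and (2) from the type ($0$ or $\{-1,+1\}$) of the Gallai quotient of the whole domain. In fact you supply more justification than the paper, whose proof simply asserts the connectedness/CCGC of the components; your "cleanest route" via density of the valuation, Kelly's theorem, and Proposition \ref{Plsccgc} (which also covers the case of a limit component) is the right way to fill that gap, whereas your parenthetical alternatives about unions of components being strong modules are shakier and best discarded.
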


\begin{proof}
Let $T$ be the decomposition tree of $P$. If $P$ is the least element of $T$, then $P$ is a robust module with value  0 or $\{-1, +1\}$. In the first, resp second, case, each maximal proper robust submodule of $P$ is connected, resp has property CCGC, the quotient $P/\equiv_P$ is an antichain, resp a chain, and $P$ is a direct, resp linear, sum of its components, resp summands. Finally, if $T$ has no least element, then by Lemma \ref{Posetreduced}, $P$ satisfies (3). 
\end{proof}

%%%%%%%%% Section 6
\section{Siblings of Countable $N$-Free Posets} \label{SibNE}

In this section our aim is to prove the following theorem which shows that the alternate Thomass\'{e} conjecture holds for countable $N$-free posets.

% Theorem 6.1
\begin{theorem} \label{AltThomasseNEP}
A countable $N$-free poset has one or infinitely many siblings.
\end{theorem}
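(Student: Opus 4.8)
### Proof plan

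The plan is to argue by structural induction on the decomposition tree, using the trichotomy of Theorem~\ref{Pstructure} together with the well-quasi-order (wqo) property of countable $NE$-free posets and a careful analysis of the three composition operations. Throughout, I would let $P$ be a countable $NE$-free poset and assume $Sib(P)$ is finite, aiming to show $Sib(P)=1$. The base case is trivial (a one-element poset has one sibling). For the inductive step I would consider separately the three cases of Theorem~\ref{Pstructure}, and in each case the idea is that a sibling $P'$ of $P$ must ``respect'' the top-level decomposition of $P$: since $CG(P)$ is (dis)connected according to the case, and comparability-graph connectivity is preserved under equimorphy in a suitable sense, $P'$ decomposes in the same way.

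First I would dispose of the \textbf{direct sum} case: if $P=\bigoplus_{v\in Q}P_v$ with the $P_v$ connected and $|Q|\geq 2$, then any sibling $P'$ is disconnected, hence itself a direct sum of its components; one then has to match up components of $P'$ with those of $P$ up to equimorphy. Here the wqo property is the key tool: the components of $P$ and $P'$ form a wqo under embeddability, so there are only finitely many equimorphy classes, each occurring with a well-defined (finite or infinite) multiplicity, and a Cantor--Bernstein / bookkeeping argument on multiplicities shows that if any component has infinitely many siblings, or if the multiset of components can be nontrivially rearranged, then $P$ has infinitely many siblings; otherwise every component is rigidly determined and $P'\cong P$. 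The \textbf{linear sum} case $P=+_{v\in Q}P_v$ is parallel but more delicate because the context chain $Q$ can itself have many siblings; the CCGC hypothesis on the summands is exactly what is needed to recover the summands and the chain $Q$ canonically from $CG(P)$ (the complement of $CG(P)$ being connected on each summand prevents a summand from being ``split'' by the equimorphism), and then one reduces to siblings of $Q$ as a chain, invoking the chain case of \cite{LPW}, together with siblings of the $P_v$ by induction.

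The \textbf{labelled-sum} case (3), where $P=\sum C$ over an infinite chain $C=(I,\leq,\ell)$ with no least element and with $I_0,I_1$ coinitial, is where I expect the main obstacle to lie. The difficulty is that such a $P$ has a ``tail'' going to $-\infty$ with infinitely many oscillations between incomparability-labels and order-labels, and one must show this already forces $Sib(P)=\infty$ unless everything above some level is rigid --- but in fact I would expect to show that in case (3) one can always produce infinitely many pairwise non-isomorphic siblings by truncating or modifying an initial segment of the labelled chain (deleting finitely many summands from the bottom, or inserting a copy of a summand) while remaining equimorphic, exploiting the absence of a least element. The technical heart is verifying that such modifications genuinely yield non-isomorphic posets (not merely non-isomorphic labelled chains) and that they are all equimorphic to $P$; this requires a back-and-forth argument using coinitiality of $I_0$ and $I_1$ to embed any truncation back into $P$ and vice versa. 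If this ``case (3) is always infinite'' claim holds, then combined with the two sum cases the induction closes: $Sib(P)$ finite forces $P$ to be built by finitely iterated direct and linear sums of rigid pieces with rigid multiplicities, hence $Sib(P)=1$.

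A secondary obstacle worth flagging is the interaction between the cases across the induction: a summand $P_v$ appearing in case (1) or (2) may itself fall under case (3), so the inductive hypothesis must be applied carefully, and one needs that ``having infinitely many siblings'' propagates upward through direct and linear sums --- i.e.\ if some block has infinitely many siblings then so does the whole poset. This monotonicity is intuitively clear (substitute the siblings of the block back in) but requires checking that the resulting posets are pairwise non-isomorphic and still equimorphic to $P$, again leaning on Corollary~\ref{SubNEposet} and the wqo property to keep the family under control.
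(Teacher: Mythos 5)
Your overall architecture matches the paper's: well-founded induction via the wqo property, the trichotomy of Theorem~\ref{Pstructure}, continuum many siblings in case (3) by perturbing a coinitial segment of the labelled chain, and an inductive reduction to blocks in the two sum cases. But there is a genuine gap in your treatment of the linear sum. You propose to ``recover the summands and the chain $Q$ canonically'' and then ``reduce to siblings of $Q$ as a chain \ldots together with siblings of the $P_v$ by induction.'' This reduction is false when $P$ has infinitely many non-trivial summands. Lemma~\ref{Uniquesummand} only gives that an embedding maps summands \emph{into} summands, inducing an order-preserving map on indices; it need not be injective, and a sibling of $P$ need not be of the form ``same context chain with equimorphic labels.'' Concretely, take $P=+_{n\in\omega^*}A_2$, an $\omega^*$-indexed linear sum of $2$-element antichains: the context chain $\omega^*$ and every summand have exactly one sibling, so your reduction would yield $Sib(P)=1$; yet inserting a singleton summand between two levels produces a non-isomorphic sibling, and Proposition~\ref{Infinitesummand} shows $Sib(P)=2^{\aleph_0}$. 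The paper handles this subcase by an entirely separate construction (Lemmas~\ref{Interval}, \ref{ConstructionLS} and \ref{Noleastlinearcontinuum}), which locates an interval of indices where the non-trivial summands are coinitial without a least element and then runs the same labelled-chain machinery as in case (3) --- left-indecomposability, Laver's theory, and Lemma~\ref{Continuummaps}. Nothing in your ``bookkeeping on multiplicities'' produces these siblings, since no rearrangement or deletion of existing summands changes the isomorphism type here; one must \emph{insert} new summands at coinitially many positions coded by a binary sequence.

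Two smaller points. First, your claim that a sibling of a disconnected $P$ is automatically disconnected is not true: if $P$ embeds into one of its components, $P$ is equimorphic to a connected poset. This case must be split off first (it immediately gives $Sib(P)=\infty$ by the argument of Lemma~\ref{Connecteddisconnected}), after which each component strictly embeds in $P$ and the induction hypothesis applies; the paper does exactly this in the proof of Theorem~\ref{AltThomasseNEP}, and the analogous step is needed for disconnected summands in the linear case. Second, your sketch of case (3) is in the right spirit (insert summands coinitially, code by maps $\mathbb{N}\to\{0,1\}$), but note that ``deleting finitely many summands from the bottom'' is not available (there is no bottom) and the equimorphy verification is the heavy part of the paper (Lemma~\ref{ReducedequimorphP}), relying on left-indecomposable initial segments and the ordinal-product Lemma~\ref{ncopies} rather than a direct back-and-forth.
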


A {\em quasi-order} is a reflexive and transitive binary relation. A {\em well-quasi-order (wqo)} is a quasi-order $\mathcal{Q}$ such that any infinite sequence of elements of $\mathcal{Q}$ contains an infinite increasing subsequence w.r.t $\leq_\mathcal{Q}$. 
We denote the class of countable $N$-free posets by $\mathcal{N}_{\leq\omega}$. 
Thomass\'{e} \cite{TH2} proved that  $\mathcal{N}_{\leq\omega}$  is well-quasi-ordered under embeddability. Hence, embeddability is a well-founded relation on $\mathcal{N}_{\leq\omega}$. Therefore, in order to prove that a countable $N$-free poset $P$ has one or infinitely many siblings, we suppose that this property holds for all $N$-free posets which strictly embed in $P$ and then show that the property holds for $P$. In order to tackle the problem we use the classification of $N$-free posets. Recall by Theorem \ref{Pstructure} that an $N$-free poset with more that one element is either a direct sum of at least two non-empty connected $N$-free posets or a linear sum of at least two non-empty $N$-free posets with property CCGC, or it is the sum of a labelled chain such that the chain has no first element. We argue by cases.  We start with the last case and prove that such a countable poset has continuum many siblings. Next, direct and linear sums will be considered.

%%%%%%%%%%%%%
% Section 6.1
\subsection{Sum Over a Labelled Chain with no Least Element}

Let $P$ be a countable $N$-free poset such that its decomposition tree has no least element. In this case by Lemma \ref{Posetreduced}, $P$ is the sum $\sum C$ of a  labelled chain $C=(I,\ell)$ such that $I$ has no first element and $\ell(i)=(P_i,r(i))$ where $r:I\to\{-1,0,+1\}$ is a map such that $I_0$ and $I_1$ are coinitial in $I$. We show that $P$ has continuum many siblings in this case. The proof is based on constructing continuum many pairwise non-isomorphic labelled chains $C_\alpha$. We quote some results in \cite{HPW} and also translate some results there into the context of $N$-free posets to show that the sums $\sum C_\alpha$ are pairwise non-isomorphic siblings of $\sum C$. We also provide the notions needed. 

A {\em labelled chain} is a pair $C=(I,\ell)$ where $I$ is a chain and $\ell$ is a map from $I$ to a quasi-order. Let $\mathcal{Q}$ be a quasi-order. A {\em $\mathcal{Q}$-embedding}, (or an {\em embedding} when the quasi-order $\mathcal{Q}$ is definite), of a labelled chain  $C=(I,\ell)$ into another labelled chain $C'=(I',\ell')$, denoted by $C\leq C'$, is an order embedding $f:I\to I'$ such that $\ell(i)\leq_\mathcal{Q} \ell'(f(i))$ for all $i\in I$. Two labelled chains $C, C'$ are {\em equimorphic}, denoted by $C\approx C'$, when they are mutually embeddable. They are {\em isomorphic} when there is an isomorphism $\phi$ from $I$ to $I'$ such that $\ell'\circ \phi=\ell$. 
A labelled chain  $C=(I,\ell)$ is {\em additively indecomposable}, or briefly {\em indecomposable}, if for every partition of $I$ into an initial segment $J$ and a final segment $F$, $C\leq C_{\upharpoonright J}$ or $C\leq C_{\upharpoonright F}$. We say that $C$ is {\em left-indecomposable}, resp {\em strictly left-indecomposable}, if $C$ embeds in every non-empty initial segment, resp if $C$ is left-indecomposable and embeds in no proper final segment. The {\em right-indecomposability} and the {\em strict right-indecomposability} are defined similarly. 

The {\em sum} $\sum_{i\in I} C_i$ of labelled chains over a chain (not over a labelled chain) is defined as in the case of chains. If $I$ is the $n$-element chain $\underline{n}:=\{0,1,\ldots, n-1\}$ with $0<1<\cdots<n-1$, then the sum is rather denoted by $C_0+C_1+\cdots+C_{n-1}$. 
Let $C=(I,\ell)$ be a labelled chain and $J, F$ be a partition of $I$ into an initial and a final segment, respectively. Let $C_0, C_1$ be obtained by restricting $C$ to $J, F$, respectively. With the notion of sum, $C$ is indecomposable if  $C=C_0+C_1$ implies that $C\leq C_0$ or $C\leq C_1$. 

Let $C=(I,\ell)$ be a labelled chain. Recall that $I^*$ is obtained from $I$ by reversing the order of $I$. Define $C^*$ to be the labelled chain $(I^*,\ell)$. 
For two labelled chains $C_0$ and $C_1$, we define $C_0+^* C_1$ to be the sum $C_1^*+C_0^*$. For a chain $J$ define $\sum^*_{j\in J} C_j=\left(\sum_{j\in J} C_j\right)^*=\sum_{j\in J^*} C_j^*$. 
A sequence $(C_n)_{n<\omega}$ of labelled chains is {\em quasi-monotonic} if $\{m : C_n\leq C_m\}$ is infinite for each $n<\omega$, its sum $\sum_{n<\omega} C_n$  is right indecomposable or equivalently $\sum_{n<\omega}^* C_n$ is left indecomposable (\cite{HPW}).  

Let $C=(I,\ell)$ be a labelled chain. Two elements $x, y\in I$, $x\leq y$, are {\em equivalent}, denoted by $x\equiv_C y$, if $C$ does not embed in the restriction of $C$ to the interval determined by $x$ and $y$, that is, $C \not\leq C_{\upharpoonright[x,y]}$. If $C$ is indecomposable, then this relation is an equivalence relation. Equivalence classes are intervals of $I$. Further, either all the elements of $I$ are equivalent, that is, there is only one equivalence class, or the quotient $I/\equiv_C$ is dense and if $J$ is any equivalence class, $C$ does not embed into $C_{\upharpoonright J}$. 

According to Laver \cite{LAV1}, the class $\mathcal{C}_{\leq\omega}$ of countable chains quasi-ordered by embeddability is wqo. Moreover, the class $\mathcal{Q}^{\mathcal{C}_{\leq\omega}}$ of countable chains labelled by a wqo $\mathcal{Q}$ is wqo. Hence, the class of countable chains labelled by $\mathcal{N}_{\leq\omega}$ is wqo.

We list the properties we need. These properties are due to Laver \cite{LAV1} and they are given in \cite{HPW}. We quote the proof given in \cite{HPW}. 

% Lemma 6.2
\begin{lemma} [\cite{HPW}]  \label{Labelledchainsproperties}
Let $C=(I,\ell)$ be a countable chain labelled over a wqo $\mathcal{Q}$.   
\begin{enumerate}
   \item If I has no least element, then there is some initial segment J of I such that $C_{\upharpoonright J}$ is left indecomposable;
    \item If C is left indecomposable, then C is an $\omega^*$ sum $\sum^*_{n<\omega} C_n$ where each $C_n$ is indecomposable and the set of m such that $C_n$ embeds into $C_m$ is infinite. 
    %\item If C is indecomposable and the quotient $I/\equiv_C$ is dense, then $C$ is equimorphic to a sum $\sum_{q\in\mathbb{Q}}C_q$ such that for every $p<q$ and r in $\mathbb{Q}$, there are $s_0,\ldots,s_{n_r-1}$ with $p<s_0<\cdots<s_{n_r-1}<q$ and $C_r\leq C_{s_0}+\cdots+C_{s_{n_r-1}}< C$. 
    \end{enumerate}
\end{lemma}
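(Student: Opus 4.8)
The plan is to derive both statements from Laver's theorem that countable $\mathcal{Q}$-labelled chains are well-quasi-ordered by embeddability -- so that embeddability is well-founded on them -- together with the analysis of the relation $\equiv_C$ recalled just above.

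\emph{Part (1).} Since $I$ has no least element, the family of non-empty initial segments of $I$ is non-empty, and by well-foundedness we may choose one, $J$, so that $C_{\upharpoonright J}$ is minimal up to equimorphy among the $C_{\upharpoonright J'}$ with $J'$ a non-empty initial segment of $I$. If $J'$ is a non-empty initial segment of $I$ with $J'\subseteq J$ -- equivalently, a non-empty initial segment of $J$ -- then $C_{\upharpoonright J'}\leq C_{\upharpoonright J}$, as $C_{\upharpoonright J'}$ is literally a restriction of $C_{\upharpoonright J}$; hence minimality forces $C_{\upharpoonright J'}\approx C_{\upharpoonright J}$, and in particular $C_{\upharpoonright J}\leq C_{\upharpoonright J'}$. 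Thus $C_{\upharpoonright J}$ embeds into each of its non-empty initial segments, i.e. $C_{\upharpoonright J}$ is left indecomposable. (Note $J$ again has no least element, so it is infinite.)

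\emph{Part (2).} First I would make two reductions: a left indecomposable chain is indecomposable (take the nontrivial half of any initial/final partition), and if $|C|>1$ then $I$ has no least element, since a least element $x_0$ would make $\{x_0\}$ a non-empty initial segment into which the infinite $C$ cannot embed. So assume $I$ is infinite with no least element; being countable it carries a descending coinitial sequence $x_0>x_1>\cdots$. Now split on the structure of $\equiv_C$. If $I$ is a single $\equiv_C$-class, then $C\leq C_{\upharpoonright[x,y]}$ for all $x<y$, so $C_{\upharpoonright[x,y]}\approx C$, and since $C$ has no least element any embedding of $C$ into $C_{\upharpoonright[x,y]}$ misses the left endpoint $x$, so $C_{\upharpoonright(x,y]}\approx C$ as well; the intervals $\tilde J_0:=\{z\in I:z>x_1\}$ and $\tilde J_n:=(x_{n+1},x_n]$ for $n\geq1$ then form an $\omega^*$-tower partitioning $I$, and setting $C_n:=(C_{\upharpoonright\tilde J_n})^{*}$ gives $C=\sum^{*}_{n<\omega}C_n$ with every $C_n$ equimorphic to $C^{*}$ -- hence indecomposable (indecomposability is self-dual) and pairwise embeddable, so the sequence is certainly quasi-monotonic. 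If instead $I/\equiv_C$ is dense, each class $K$ is an interval with $C\not\leq C_{\upharpoonright K}$; here one must run down $I$ choosing a descending $\omega^*$-sequence of cut points, one inside suitably many of the classes, so that the resulting blocks are indecomposable and, invoking once more the wqo of countable $\mathcal{Q}$-labelled chains, so that each block embeds into infinitely many of the later ones; reading the blocks off (with the order and block reversal built into the $\sum^{*}$-notation) yields the desired decomposition.

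The hard part is this last case. The difficulty is to choose the cut points so that the blocks are \emph{simultaneously} indecomposable, exhaust all of $C$, and form a quasi-monotonic sequence: indecomposability of a block has to be read off from the local $\equiv_C$-structure at the chosen cut, whereas quasi-monotonicity needs more than a single increasing subsequence of blocks extracted from the wqo -- it needs enough uniformity that every block reappears, up to embedding, cofinally often -- and the choice of cut points and the verification of these two properties are entangled rather than independent. This is precisely the combinatorial core of Laver's decomposition theorem, and the proof would follow the treatment reproduced in \cite{HPW}.
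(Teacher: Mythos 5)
Your Part (1) is correct: choosing an initial segment $J$ with $C_{\upharpoonright J}$ minimal under embeddability (such a $J$ exists because the countable $\mathcal{Q}$-labelled chains form a wqo, hence a well-founded class) and observing that every non-empty initial segment of $J$ embeds into $C_{\upharpoonright J}$ by restriction does yield left indecomposability. Note, for calibration, that the paper itself supplies no argument for this lemma at all --- despite the sentence ``we quote the proof given in \cite{HPW}'', the lemma is stated as a citation with no proof environment following it --- so the comparison below is against what a complete proof would have to contain.

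For Part (2) there are two genuine problems. First, you have inverted the definition of $\equiv_C$: in the paper, $x\equiv_C y$ means $C\not\leq C_{\upharpoonright[x,y]}$, so the single-class case is precisely the case in which $C$ embeds into \emph{no} bounded interval, whereas you assert $C\leq C_{\upharpoonright[x,y]}$ for all $x<y$ and build that whole sub-argument on the resulting (false) equimorphisms $C_{\upharpoonright(x_{n+1},x_n]}\approx C$. The construction also fails on its own terms: for $C=\omega^{*}$ (a single class), the blocks $(x_{n+1},x_n]$ cut by an arbitrary coinitial descending sequence are finite chains, possibly of length at least $2$, and such chains are not indecomposable; so neither the indecomposability of the $C_n$ nor quasi-monotonicity follows from an unconstrained choice of cut points. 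Second, in the dense-quotient case you explicitly give no argument --- you describe the obstruction and defer to ``the combinatorial core of Laver's decomposition theorem.'' Since that case carries essentially all the content of Part (2), the proposal does not amount to a proof: what is missing is exactly the simultaneous choice of cut points guaranteeing that every block is indecomposable and recurs, up to embedding, cofinally often, which is the argument one would have to reproduce from Laver \cite{LAV1} or \cite{HPW}.
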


We translate a result in \cite{HPW} to the context of $N$-free posets. 

% Lemma 6.3
\begin{lemma}  \label{Finalsegmentsiso} 
Let $P, P'$ be two isomorphic NE-free posets such that their decomposition tree has no least element. If $P=\sum C$ and $P=\sum C'$ where $C=(I,\ell)$ and $C'=(I',\ell')$ are two labelled chains, then there are two infinite initial segments J of I and $J'$ of $I'$ and an isomorphism h of the induced poset labelled chains $C_{\upharpoonright J}$ and $C'_{\upharpoonright J'}$. 
\end{lemma}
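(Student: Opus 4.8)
The plan is to reduce the statement about $NE$-free posets to a statement about labelled chains, and then invoke the corresponding result of \cite{HPW}. First I would record the key dictionary: since the decomposition trees of $P$ and $P'$ have no least element, Lemma \ref{Posetreduced} applies, so writing $P=\sum C$ with $C=(I,\ell)$, $\ell(i)=(P_i,r(i))$, is legitimate, and the labelled chain $C$ carries exactly the data $(I,\leq)$ together with labels in the wqo $\mathcal{N}_{\leq\omega}\times\{-1,0,+1\}$ (which is a wqo because $\mathcal{N}_{\leq\omega}$ is, by Thomass\'e, and $\{-1,0,+1\}$ is finite). The crucial point is that an isomorphism of posets $P\cong P'$ must respect the modular decomposition, hence must carry the chain $\mathfrak{C}_x$ of robust modules through $x$ to the chain through its image; consequently it induces an isomorphism of the labelled chains $C$ and $C'$ in the sense defined before the lemma (an order isomorphism $\phi:I\to I'$ with $\ell'\circ\phi=\ell$, where $\ell(i)=(P_i,r(i))$ and the equality of labels means $P_i\cong P'_{\phi(i)}$ and $r(i)=r'(\phi(i))$). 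This is the step where I expect to have to be careful: one must check that the robust modules used to build $I$ from $\mathfrak{C}$ in the proof of Lemma \ref{Posetreduced} are canonically determined by $P$ up to the choices $A^-/A^+$, so that a poset isomorphism really does descend to a well-defined labelled-chain isomorphism, rather than merely an equimorphism.

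Granting that reduction, the statement becomes: if two countable labelled chains $C,C'$ over a wqo are isomorphic and $I$ (equivalently $I'$) has no least element, then some infinite initial segment of $C$ is isomorphic (as a labelled chain) to some infinite initial segment of $C'$. But if $C$ and $C'$ are genuinely isomorphic as labelled chains this is trivial — take $J=I$, $J'=I'$. So the real content must be that the intended hypothesis is weaker: $P$ and $P'$ are \emph{equimorphic} (which is how "isomorphic NE-free posets" interacts with the two different presentations $\sum C$ and $\sum C'$ of \emph{the same} poset $P$ — note the statement writes $P=\sum C$ and $P=\sum C'$, i.e. two decompositions of one poset, or of equimorphic posets). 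Reading it that way, the correct target is the \cite{HPW} lemma for equimorphic left-indecomposable labelled chains: using Lemma \ref{Labelledchainsproperties}(1), pass to initial segments $J_0\subseteq I$, $J_0'\subseteq I'$ on which $C_{\upharpoonright J_0}$, $C'_{\upharpoonright J_0'}$ are left-indecomposable; by (2), write each as an $\omega^*$-sum of indecomposables with the quasi-monotonic embedding property; then the $NE$-free induction hypothesis (each summand $P_i$ strictly embeds in $P$, so the dichotomy is known for it) together with the wqo of $\mathcal{N}_{\leq\omega}$-labelled chains lets one match the two $\omega^*$-sums term by term and splice the matchings into an isomorphism $h$ of the two labelled chains on cofinal-from-below pieces, i.e. on infinite initial segments $J\subseteq J_0$, $J'\subseteq J_0'$.

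So the steps, in order, are: (i) invoke Lemma \ref{Posetreduced} to get the presentations and identify the label wqo; (ii) translate the isomorphism (or equimorphism) of $P,P'$ into an isomorphism (or equimorphism) of the labelled chains $C,C'$, checking canonicity of the construction of $I$ from $\mathfrak{C}$; (iii) apply Lemma \ref{Labelledchainsproperties} to replace $C,C'$ by left-indecomposable initial segments and decompose each as a quasi-monotonic $\omega^*$-sum of indecomposable labelled chains; (iv) use the wqo of countable $\mathcal{N}_{\leq\omega}$-labelled chains (Laver \cite{LAV1}) plus the fact that the poset labels are all proper subposets of $P$, for which the relevant structural facts are available, to match the two $\omega^*$-sums and assemble $h$; (v) conclude that $h$ is an isomorphism of the induced labelled chains $C_{\upharpoonright J}$ and $C'_{\upharpoonright J'}$ on infinite initial segments, and hence of the corresponding induced $NE$-free subposets. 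The main obstacle is step (ii)/(iv): one must be sure that "isomorphic labelled chain on an initial segment" is strong enough — that the term-by-term matching of the $\omega^*$-sums produced by quasi-monotonicity can be upgraded from equimorphism of the indecomposable summands to actual isomorphism of a cofinal-from-below block, which is precisely where the $NE$-free-poset inductive hypothesis on the summands $P_i$, and not merely wqo, is doing the work; I would lean on the analogous argument in \cite{HPW} for this and cite it explicitly.
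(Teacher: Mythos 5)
Your proposal goes astray at the point where you decide the hypothesis ``must really be'' equimorphism. It is not: the lemma is about genuinely isomorphic posets, and the non-triviality you are looking for lies elsewhere. The two labelled chains $C=(I,\ell)$ and $C'=(I',\ell')$ are the presentations produced by Lemma \ref{Posetreduced} from two (possibly different) choices of reference element, i.e.\ from two different maximal chains $\mathfrak{C}_x$ and $\mathfrak{C}_y$ of robust modules in the decomposition tree. An isomorphism $P\cong P'$ therefore does \emph{not} hand you an isomorphism of $C$ with $C'$ — the index chains $I$ and $I'$ are different branches-to-leaves of the same tree — and that is the only thing the lemma has to deal with. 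The paper's proof is correspondingly short: identify the (isomorphic, hence WLOG identical) decomposition trees; observe that $I$ and $I'$ correspond to maximal chains in $T$ with no first element; since $T$ is a meet-tree, these two chains coincide on everything below the node $\{x\}\wedge\{y\}$, which is an infinite initial segment $I_x$ of both because $T$ has no least element; take $J=J'=I_x$ and $h=\mathrm{id}$. Your step (ii), where you start to worry about how the construction of $I$ from $\mathfrak{C}$ depends on choices, is exactly the right instinct — had you pushed on it you would have landed on this argument — but you then abandon it.

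The route you actually propose in steps (iii)--(v) has a gap you yourself identify but cannot close: decomposing left-indecomposable initial segments as quasi-monotonic $\omega^*$-sums and matching summands via Lemma \ref{Labelledchainsproperties} and the wqo of $\mathcal{N}_{\leq\omega}$-labelled chains produces, at best, mutual \emph{embeddings} of initial segments, i.e.\ an equimorphism. The lemma asserts an \emph{isomorphism} $h$ of the induced labelled chains, and this strength is essential downstream: in Lemma \ref{Nonisoplc} the label-preserving isomorphism is what forces $h(b_n)=b_m$ and $h(c_n)=c_m$ and hence lets Lemma \ref{Continuummaps} separate the $2^{\aleph_0}$ maps. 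No inductive hypothesis on the summands $P_i$ upgrades a term-by-term equimorphic matching of indecomposables to an isomorphism of a common initial block; the upgrade comes for free only because, after identifying the decomposition trees, the two chains literally agree below their meet.
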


\begin{proof}
Since $P$ and $P'$ are isomorphic, their decomposition trees are isomorphic. We may assume that they are identical. Let $T$ be such a tree. Then, $(I,\leq)$ and $(I'\leq')$ correspond to maximal chains in $T$ and by Lemma \ref{Posetreduced} they do not have a first element. If these chains are identical, then there is nothing to prove ($J=J'=I$). If not, then since $T$ has no least element, these chains meet in some element $x\in I\cap I'$ corresponding to a node in $T$. Set $J=J'=I_x$ and let $h$ be the identity on the initial segment $I_x$. 
\end{proof}

The following lemma appears as Lemma 4 in page 39 of \cite{LPSZ}. 

% Lemma 6.4
\begin{lemma} [\cite{LPSZ}] \label{Continuummaps}
There is a set of $2^{\aleph_0}$ maps  $f$ in $\{0,1\}^\mathbb{N}$ such that for every pair f, $f'$ of distinct maps, and every isomorphism h from a final segment F of $\mathbb{N}$ onto another $F'$, there is some $n\in F$ such that $f(n)\neq f'(h(n))$. 
\end{lemma}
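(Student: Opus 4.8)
The plan is to reduce the stated diagonalization property to a statement about \emph{tails} of $\{0,1\}$-sequences, and then to produce the family by a transversal argument for a suitable equivalence relation. First I would analyze the isomorphisms $h$. A nonempty final segment of $\mathbb{N}$ has the form $F=\{n : n\geq a\}$ for some $a\geq 0$, and any such $F$ has order type $\omega$; hence for two nonempty final segments $F=\{n\geq a\}$ and $F'=\{n\geq b\}$ there is a \emph{unique} order isomorphism $h\colon F\to F'$, namely the shift $h(n)=n+(b-a)$. (The empty final segment must be tacitly excluded, since otherwise the condition ``there is some $n\in F$'' is vacuously false; this is the intended reading.) Thus, for a fixed pair $f,f'$ and fixed $h$, the negation of the desired conclusion, i.e.\ $f(n)=f'(h(n))$ for all $n\in F$, becomes, after substituting $n=a+i$, exactly $f(a+i)=f'(b+i)$ for all $i\geq 0$; equivalently, the tail of $f$ from position $a$ agrees coordinatewise with the tail of $f'$ from position $b$. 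So the property required of $\mathcal{F}$ is precisely: for distinct $f,f'\in\mathcal{F}$, no tail of $f$ equals any tail of $f'$.

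Next I would set up the matching equivalence relation. Define $f\sim g$ on $\{0,1\}^{\mathbb{N}}$ to mean that some tail of $f$ equals some tail of $g$, that is, there exist $a,b\geq 0$ with $f(a+i)=g(b+i)$ for all $i\geq 0$. Reflexivity and symmetry are immediate. For transitivity, suppose $f\sim g$ is witnessed by $(a,b)$ and $g\sim k$ is witnessed by $(c,d)$; looking at the coordinates of $g$ from position $m:=\max(b,c)$ onward aligns the two agreements and exhibits $f(a+m-b+i)=k(d+m-c+i)$ for all $i\geq 0$, so $f\sim k$ with witnesses $(a+m-b,\,d+m-c)$. Hence $\sim$ is an equivalence relation, and the property we seek of $\mathcal{F}$ says exactly that distinct members of $\mathcal{F}$ lie in distinct $\sim$-classes.

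Then I would count. Fixing $f$, for each pair $(a,b)$ the set of $g$ whose tail at $b$ equals the tail of $f$ at $a$ is fully determined on $[b,\infty)$ and free on the finitely many coordinates $0,\dots,b-1$, hence has exactly $2^{b}$ elements; taking the union over the countably many pairs $(a,b)$ shows each $\sim$-class is countable. Since $\{0,1\}^{\mathbb{N}}$ has cardinality $2^{\aleph_0}$ and is partitioned into countable classes, the number $\kappa$ of classes satisfies $\kappa\cdot\aleph_0\geq 2^{\aleph_0}$; because $\aleph_0<2^{\aleph_0}$, if $\kappa<2^{\aleph_0}$ then $\kappa\cdot\aleph_0=\max(\kappa,\aleph_0)<2^{\aleph_0}$, a contradiction, so $\kappa=2^{\aleph_0}$. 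Finally, choosing (by the axiom of choice) one representative from each class yields a set $\mathcal{F}\subseteq\{0,1\}^{\mathbb{N}}$ of size $2^{\aleph_0}$ whose distinct elements are $\sim$-inequivalent, which by the reduction of the first paragraph is exactly the asserted family.

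I expect no serious conceptual obstacle here; the content is in two pieces of bookkeeping. The first is the reduction itself, namely recognizing that every isomorphism between final segments of $\mathbb{N}$ is a shift, so that the ``bad'' event is precisely tail-equality of $f$ and $f'$. The second is the cardinal-arithmetic step that a partition of a set of size $2^{\aleph_0}$ into countable blocks must have $2^{\aleph_0}$ blocks, which uses only $\aleph_0<2^{\aleph_0}$ and in particular requires no extra set-theoretic hypothesis such as the continuum hypothesis. The construction is non-effective, relying on choice to select the transversal; an explicit family could instead be built by encoding reals with strictly increasing blocks of $0$'s as shift-detecting markers, but the transversal argument is cleaner and suffices.
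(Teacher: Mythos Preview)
Your argument is correct. The reduction of ``isomorphism between final segments of $\mathbb{N}$'' to ``shift by a fixed integer'' is exactly right, and it cleanly converts the diagonalization condition into the statement that no two distinct members of the family are tail-equivalent. The verification that tail-equivalence is an equivalence relation with countable classes, together with the cardinal-arithmetic step $\kappa\cdot\aleph_0<2^{\aleph_0}$ for $\kappa<2^{\aleph_0}$, is sound, and the transversal gives the family. Your remark about excluding the empty final segment is also well taken.

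As for comparison: the paper does not supply its own proof of this lemma. It is quoted verbatim from \cite{LPSZ} (Lemma~4 there) and used as a black box in the constructions of Lemmas~\ref{Nonisoplc} and~\ref{ConstructionLS}. So there is nothing in the present paper to set your approach against. Your proof is a standard and complete justification of the quoted result; the alternative explicit construction you allude to (encoding reals with strictly increasing runs of $0$'s as shift markers) would avoid the appeal to choice, but the transversal argument is perfectly adequate for the purpose here.
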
 

%\begin{proof}
%Let $X=\{x_n : n\in\mathbb{N}\}$ be a set where $x_0=0$ and $x_{n+1}=x_n+n+1$. Now, let $\mathcal{A}$ be an almost disjoint family of $2^{\aleph_0}$ infinite subsets of $X$. For any $A\in\mathcal{A}$ and $n>0$, $A+n$ is almost disjoint from $X$, and thus almost disjoint from any other $A'$. The characteristic functions of members of $\mathcal{A}$ have the required property. 
%\end{proof}

The following lemma gives a construction of siblings of a countable $N$-free poset which are  sums of labelled chains with no least element. The proof is adapted with some passages quoted from \cite{HPW}. 

% Lemma 6.5
\begin{lemma} \label{Nonisoplc}
Let $C=(I,\ell)$ be a countable labelled chain with no least element where $\ell(i)$ is the pair $(P_i,r(i))$ made of a non-empty NE-free poset and $r(i)\in \{-1,0,+1\}$ such that $I_0=\{i\in I : r(i)=0\}$ and $I_1=\{i\in I : |r(i)|=1\}$ are coinitial in I. Then, there are $2^{\aleph_0}$  labelled chains $C_f$ where $f:\mathbb{N}\to\{0,1\}$, such that $\sum C_f\ncong \sum C_g$ for $f\neq g$. 
\end{lemma}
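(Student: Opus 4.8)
The plan is to build the $2^{\aleph_0}$ siblings $C_f$ by a ``decomposition and reinsertion'' argument, following the template of Hahn--Pouzet--Woodrow but now in the $NE$-free poset setting. First I would exploit the fact that $\mathcal{N}_{\leq\omega}$ is wqo to label $I$ by a wqo, so that all the Laver-type structure theory (Lemma \ref{Labelledchainsproperties}) applies to $C$. Since $I$ has no least element, Lemma \ref{Labelledchainsproperties}(1) produces an initial segment $J$ of $I$ with $C_{\upharpoonright J}$ left-indecomposable, and then Lemma \ref{Labelledchainsproperties}(2) writes $C_{\upharpoonright J}=\sum^*_{n<\omega} D_n$ with each $D_n$ indecomposable and the sequence quasi-monotonic (each $D_n$ embeds into infinitely many $D_m$). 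Writing $C = C_{\upharpoonright J} +^{*\,\mathrm{op}}\!\!$-style, i.e. $C = \big(\sum^*_{n<\omega} D_n\big) + C_{\upharpoonright (I\setminus J)}$, I would think of $C$ as an $\omega^*$-indexed sum of blocks $D_n$ sitting on top of a fixed tail $E := C_{\upharpoonright(I\setminus J)}$.

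The second step is the construction. For $f:\mathbb{N}\to\{0,1\}$ I would define $C_f$ by reindexing the blocks: roughly, $C_f := \big(\sum^*_{n<\omega} D_{n}^{(f(n))}\big) + E$, where $D_n^{(0)}$ and $D_n^{(1)}$ are two slightly different ``copies'' of the relevant block --- for instance $D_n^{(0)}=D_n$ and $D_n^{(1)}=D_n+D_n$, or $D_n$ with or without an extra $0$-labelled point prepended, chosen so that (a) $D_n^{(i)}$ is still indecomposable and equimorphic to $D_n$ in the $\mathcal{Q}$-embedding quasi-order, hence (b) $\sum C_f \approx \sum C$ by a back-and-forth using quasi-monotonicity (each block still embeds into cofinally many others), so every $C_f$ yields a genuine sibling $\sum C_f$. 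The coinitiality of $I_0$ and $I_1$ is what guarantees there is always ``room'' to perform this toggling with a $0$- or $\pm1$-labelled point without leaving the class of labelled chains of the required form. The $NE$-free condition is preserved throughout because, by the Lemma proved just above, $\sum_{i} P_i$ is $NE$-free iff each $P_i$ is, and the $P_i$ are unchanged.

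The third and crucial step is non-isomorphism: $\sum C_f \cong \sum C_g \Rightarrow f=g$. Here I would invoke Lemma \ref{Finalsegmentsiso}: an isomorphism between $\sum C_f$ and $\sum C_g$ (both $NE$-free with rootless decomposition tree) restricts to an isomorphism $h$ of induced labelled-chain initial segments, and by the rigidity of the block decomposition --- the indecomposable blocks $D_n^{(i)}$ are recoverable from the labelled chain up to the equivalence $\equiv_{C_f}$, and the quotient by $\equiv$ pins down the $\omega^*$ indexing --- $h$ induces an order isomorphism between final segments of $(\mathbb{N},\leq)$ respecting the block data, in particular carrying the ``type'' $f(n)$ to $g(h(n))$. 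Feeding a family of maps $f$ from Lemma \ref{Continuummaps} then forces $f=g$, giving $2^{\aleph_0}$ pairwise non-isomorphic $\sum C_f$. The main obstacle I anticipate is precisely this last step: making rigorous that the indecomposable blocks and their $0/1$-types survive isomorphism, i.e. that the data $f(n)$ is an isomorphism-invariant of $\sum C_f$ read off correctly through the dictionary between posets and labelled chains (this is where one must carefully combine the $\equiv_C$-class analysis of indecomposable labelled chains with the decomposition-tree uniqueness), whereas verifying $\sum C_f \approx \sum C$ and preservation of $NE$-freeness should be routine given the lemmas already in hand.
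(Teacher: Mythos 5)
Your overall template matches the paper's: pass to a left-indecomposable initial segment via Lemma \ref{Labelledchainsproperties}(1), encode a map $f:\mathbb{N}\to\{0,1\}$ into the labelled chain, and then combine Lemma \ref{Finalsegmentsiso} with Lemma \ref{Continuummaps} to extract $2^{\aleph_0}$ non-isomorphic sums. But the step you yourself flag as the main obstacle --- recovering $f(n)$ from the isomorphism type of $\sum C_f$ --- is a genuine gap, and the mechanism you propose for closing it does not work. Lemma \ref{Finalsegmentsiso} hands you a label-preserving order isomorphism $h$ between initial segments of the \emph{index chains}; it knows nothing about your chosen decomposition $\sum^*_{n<\omega} D_n$, which is not canonical and need not be respected by $h$. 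Coding $f(n)$ by toggling $D_n$ versus $D_n+D_n$ (or by prepending a point) therefore gives you no way to read $f(n)$ back off: the relation $\equiv_{C_f}$ is defined by embeddability of $C_f$ into intervals, so it is an equimorphy-level invariant that cannot distinguish $D_n$ from $D_n+D_n$, and it does not ``pin down the $\omega^*$ indexing'' of your particular blocks. Without a recognition device your claim that $h$ carries the type $f(n)$ to $g(h(n))$ is unsupported.

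The paper's proof supplies exactly such a device, and it is the essential idea your proposal is missing. One first perturbs the labels so that every $i$ in the initial segment whose label $P_i$ is a chain or antichain of \emph{even} size gets one extra element, making it odd-sized; then one inserts, above a coinitial sequence $(a_n)$ with $|r(a_n)|=1$, fresh marker elements $b_n$ (labelled by an antichain of size $2f(n)+2$, with $r=0$) and $c_n$ (labelled by a chain of size $2f(n)+2$, with $r=r(a_n)$). After this parity normalization the markers are the \emph{only} indices carrying even-sized chain or antichain labels, so any label-preserving isomorphism of the index chains must send $b_n$ to some $b_m$ and $c_n$ to some $c_m$; the sizes $2f(n)+2$ versus $2f'(m)+2$ then transport $f$ to $f'$ along an isomorphism of final segments of $\mathbb{N}$, and Lemma \ref{Continuummaps} finishes. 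This is a pointwise argument on labels, not a block argument, which is why it survives the non-canonicity of the indecomposable decomposition. (A smaller point: equimorphy $\sum C_f\approx\sum C$ is the content of Lemma \ref{ReducedequimorphP}, not of this lemma, so your back-and-forth discussion belongs there; here one only needs the $C_f$ to retain the hypotheses --- $I$ with no least element, $I_0$ and $I_1$ coinitial --- which the paper checks explicitly.)
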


\begin{proof}
Let $C=(I,\ell)$ be a countable labelled chain satisfying the conditions of the lemma. We select an initial segment $J$ of $I$ such that the restriction $C_J:=(J,\ell_{\upharpoonright J})$ is left indecomposable (this is due to Lemma \ref{Labelledchainsproperties} (1)). Let $Ev(J)$ be the set of $i\in J$ such that $P_i$ is a chain or an antichain of even size. Since $I$ has no least element and $I_0$ and $I_1$ are coinitial in $I$, there is an infinite descending sequence $(a_n)_{n<\omega}$ of elements of $J$ coinitial in $I$ such that $|r(a_n)|=1$. We insert infinitely many elements $b_n, c_n$ such that $b_n$ covers $a_n$ and $c_n$ covers $b_n$. Set $\Bar{J}:=J\cup \{b_n, c_n : n<\omega\}$, $F:=I\setminus J$ and $\Bar{I}:=\Bar{J}\cup F$. To each map $f:\mathbb{N}\to \{0,1\}$ we associate a labelled chain $C_f=(\Bar{I},\ell_f)$ with  $\ell_f(i):=(\Bar{P_i},r_f(i))$ as follows: If $i\in I\setminus Ev(J)$, then we set $\ell_f(i):=\ell(i)$. If $i\in Ev(J)$, then set $r_f(i):=r(i)$ and let $\Bar{P_i}$ be an extension of $P_i$ to an extra element in such a way that the new poset is a chain if $P_i$ is a chain and an antichain, otherwise. If $i\in \{b_n, c_n\}$, recalling that $|r(a_n)|=1$, we set $r_f(b_n)=0$ and $r_f(c_n)=r(a_n)$, $\Bar{P_i}$ being an antichain, resp a chain, of size $2f(n)+2$ if $i=b_n$, resp $i=c_n$. It is clear that $\bar{I}_0=\{i\in \bar{I} : r_f(i)=0\}$ and $\bar{I}_1=\{i\in \bar{I} : |r_f(i)|=1\}$ are coinitial in $\bar{I}$. 

% Claim 6.6
%\begin{claim} 
%$r_f$ takes 0 and $\pm 1$ densely.  
%\end{claim} 

%\begin{proof}
%It is clear that $r_f$ takes 0 and $\pm 1$ densely in the interval $a_n < b_n < c_n$ for each $n\in\mathbb{N}$. Let $n$ be given and $c_n < j$ where $j\in\Bar{I}$ and assume that $r_f(c_n)=r(j)$. We have $a_n<j$. By density of $r$ we may find some $k\in I$ with $a_n < k < j$ and $r_f(k)=r(k)=0$. Since $c_n$ covers $b_n$ and $b_n$ covers $a_n$, we have $c_n < k <j$. Now, let $i < a_n$, $i\in\Bar{I}$, with $r_f(i)=r(a_n)$. If $i\in I$, then density of $r$ implies that there exists some $k$ with $i < k < a_n$ and $r_f(k)=r(k)=0$. If $i=c_m$ for some $m$, then $n<m$ and we have $a_m<a_n$. Density of $r$ guarantees the existence of some $k\in I$ with $a_m < k < a_n$ and $r_f(k)=r(k)=0$. We have $c_m < k < a_n$ because $c_m$ covers $b_m$ and $b_m$ covers $a_m$. 
%\end{proof}

If $f$ and $f'$ are two maps from $\mathbb{N}$ to $\{0,1\}$ such that the sums $\sum C_f$ of $C_f=(\Bar{I},\ell_f)$ and $\sum C_{f'}$ of $C_{f'}=(\Bar{I},\ell_{f'})$ are isomorphic, then according to Lemma \ref{Finalsegmentsiso}, there are two initial segments $\Bar{J}$ and $\Bar{J'}$ of $\Bar{I}$ and an isomorphism $h$ from $\Bar{J}$ onto $\Bar{J'}$ preserving the labels, that is, the labels $\ell_f(i)=(\Bar{P_i},r_f(i))$ and $\ell_{f'}(h(i))=(\Bar{P}_{{h(i)}},r_{f'}(h(i)))$, $i\in \Bar{J}$, are isomorphic, meaning that $\Bar{P_i}$ and $\Bar{P}_{h(i)}$ are isomorphic and $r_f(i)=r_{f'}(h(i))$ for each $i\in \Bar{J}$. If $i=b_n$, resp $i=c_n$, then necessarily $h(b_n)=b_m$, resp $h(c_n)=c_m$, for some $m$. Indeed, $\Bar{P}_{h(i)}$  must be an antichain, resp chain, of even size. So, $h(b_n), h(c_n)\in Ev(\Bar{J})$. Further, recall that for $i\in Ev(J)$, $\Bar{P_i}$ is a chain or antichain of odd size. Since $r_{f'}(h(b_n))=r_f(b_n)=0$, resp $r_{f'}(h(c_n))=r_f(c_n)=\pm 1$ where $\pm 1$ means $-1$ or $+1$, hence $h(b_n)$, resp $h(c_n)$, must be some $b_m$, resp some $c_m$. Consequently, there are two final segments $F$ and $F'$ of $\mathbb{N}$, and an isomorphism $h$ from $F$ to $F'$ such that $f'(h(n))=f(n)$ for every $n\in F$. By applying Lemma \ref{Continuummaps}, we get $2^{\aleph_0}$ pairwise non-isomorphic labelled chains $C_f$ having the same properties as does $C$.
\end{proof} 

In Lemma \ref{Nonisoplc}, we observe that the order on $P_f:=\sum C_f$ extends both $\sum C_{f\upharpoonright\Bar{J}}$ and $\sum C_{f\upharpoonright F}$ where the relation between an element $x$ of $\sum C_{f\upharpoonright \Bar{J}}$ and an element $y$ of $\sum C_{f\upharpoonright F}$ is determined as follows: $x <_{P_f} y$ if and only if $r_f(i)=-1$ where $i\in\Bar{J}$ and $x\in \Bar{P_i}$; and $y <_{P_f} x$ if and only if $r_f(i)=+1$ where $i\in \Bar{J}$ and $x\in \Bar{P_i}$. We denote this by $\mp$ and obtain $\sum C_f=\sum C_{f\upharpoonright\Bar{J}}\mp\sum C_{f\upharpoonright F}$. 

Let $C=(I,\ell)$ be a labelled chain and $n$ be a positive integer. The {\em ordinal product} $\underline{n}.C$ is the labelled chain $(\underline{n}.I,\ell_n)$ where $\underline{n}.I$ is the ordinal product of the $n$-element chain $\underline{n}:=\{0, \ldots, n-1\}$, $0<1<\cdots<n-1$, and the chain $I$, that is, the ordinal sum of $C$ copies of $\underline{n}$, and $\ell_n(m,i)=\ell(i)$ for every $m\in\underline{n}$, $i\in I$.

% Lemma 6.6
\begin{lemma} [\cite{HPW}]  \label{ncopies}
Let $C=(I,\ell)$ be a countably infinite labelled chain. If the labels belong to a b.q.o and C is indecomposable, then for every positive integer n, the ordinal product $\underline{n}.C$ embeds in C. 
\end{lemma}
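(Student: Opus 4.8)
The plan is to establish $\underline{n}.C\le C$ by first reducing to the case $n=2$, and then proving $\underline{2}.C\le C$ by exploiting the self-similarity that indecomposability forces on $C$. For the reduction, I would use three routine facts: (a) if $m\le m'$ then $\underline{m}.C\le \underline{m'}.C$, obtained by enlarging each run of $m$ equal labels to a run of $m'$ equal labels; (b) the ordinal product is associative, so $\underline{2}.(\underline{n}.C)=\underline{2n}.C$ (an isomorphism, checking the labels as well); and (c) the operation $\underline{2}.(\,\cdot\,)$ is monotone under embeddings, i.e.\ $D\le C$ implies $\underline{2}.D\le\underline{2}.C$. Granting the case $n=2$ and inducting on $n$ (the case $n=1$ being trivial), one then gets $\underline{n+1}.C\le\underline{2n}.C=\underline{2}.(\underline{n}.C)\le\underline{2}.C\le C$, using (a) with $n+1\le 2n$, then (b), then (c) with the induction hypothesis, then the base case.

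To prove $\underline{2}.C\le C$ I would use the equivalence relation $\equiv_C$ recalled above and distinguish the two possibilities allowed for an indecomposable labelled chain. If $I/\equiv_C$ has more than one class it is a countable dense chain, and $C$ embeds into $C_{\upharpoonright[x,y]}$ whenever $x<y$ lie in distinct $\equiv_C$-classes; since $\underline{2}.C$ is countable, I would enumerate $\underline{2}.I$ and place its points into $C$ one at a time, always inserting the new point inside an open interval of $I$ whose endpoints lie in distinct classes — hence an interval still carrying a full copy of $C$, with room for the (countably many) points still to be placed — so that the construction never gets stuck and yields an embedding $\underline{2}.C\le C$. If instead $I$ is a single $\equiv_C$-class, then $C$ embeds into no proper closed interval of $I$, which forces $I$ to have no least element or no greatest element; replacing $C$ by $C^*$ if necessary (note $\underline{2}.C^*=(\underline{2}.C)^*$, so this is harmless), I may assume $I$ has no greatest element. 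Then applying indecomposability to the cuts of $I$ shows that $C$ embeds cofinally often into its own final segments, i.e.\ $C$ is ``right-repetitive''; equivalently, by the dual of Lemma \ref{Labelledchainsproperties}(2), $C$ is an $\omega$-sum $\sum_{k<\omega}C_k$ of indecomposable labelled chains with each $C_k$ embedding into infinitely many $C_m$. One then builds an embedding of $\underline{2}.C=\sum_{k<\omega}\underline{2}.C_k$ into $C$ recursively in $k$, using at stage $k$ a fresh tail-copy of $C$ to host the ``second'' copies of the points of $C_k$ while leaving room for the later summands. In both cases the b.q.o.\ hypothesis on the labels is precisely what guarantees, via the well-quasi-ordering of countable chains labelled over a wqo, that the required repeated copies of $C$ actually exist.

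The main obstacle is the construction in the proof of $\underline{2}.C\le C$: it must be carried out so that the ``room for the future'' is never consumed, and doing this by hand is delicate (in particular the single-class case has some case-bookkeeping depending on whether $I$ has a least element). The cleanest route is to feed the argument through Laver's structure theory for indecomposable labelled chains, recorded in Lemma \ref{Labelledchainsproperties}, rather than redoing the interleaving arguments from scratch; the rest is then a matter of organizing the recursion.
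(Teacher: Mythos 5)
This lemma is quoted in the paper from \cite{HPW} and is not proved there, so there is no in-paper argument to compare against; I can only assess your proposal on its own terms. Your reduction to $n=2$ is fine, and invoking Laver's structure theory (the $\equiv_C$ dichotomy and the quasi-monotonic $\omega$-sum decomposition of Lemma \ref{Labelledchainsproperties}(2)) is the right framework. But both branches of your main argument have genuine gaps. In the single-class case, writing $\underline{2}.C=\sum_{k<\omega}\underline{2}.C_k$ and embedding it into $C=\sum_{k<\omega}C_k$ requires $\underline{2}.C_k$ to embed into a finite consecutive block $C_m+\cdots+C_{m+p}$. Your device of ``a fresh tail-copy of $C$ to host the second copies of the points of $C_k$'' cannot work as stated: in $\underline{2}.C_k$ each second copy $(1,i)$ sits \emph{between} $(0,i)$ and $(0,j)$ for $j>i$, so the second copies cannot all be sent above the images of the first copies. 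Nor does $\underline{2}.D\le D+D$ hold in general (take $D$ a two-point chain labelled by incomparable bqo elements $a,b$: then $aabb\not\le abab$). The natural repair, $\underline{2}.C_k\le C_k$ ``by induction,'' is exactly the statement being proved and needs a well-founded induction that you have not set up; note that a summand $C_k$ can be equimorphic to $C$ itself, so induction on embeddability does not obviously apply.

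In the dense-quotient case, the invariant ``each gap still carries a full copy of $C$'' is neither clearly maintainable nor clearly sufficient. If you place the new point at $f(i_k)$ where $f$ embeds $C$ into the current gap, the two resulting sub-gaps are only guaranteed to carry copies of $C_{\upharpoonright I_{<i_k}}$ and $C_{\upharpoonright I_{>i_k}}$, not of $C$; and even if the invariant held, the points still to be placed inside a gap form a restriction of $\underline{2}.C$, not of $C$, so ``a copy of $C$'' is not obviously enough room. Finally, the step ``applying indecomposability to the cuts of $I$ shows $C$ is right-repetitive'' is not justified when $I$ has neither endpoint: indecomposability of a cut $J+F$ only gives $C\le C_{\upharpoonright J}$ or $C\le C_{\upharpoonright F}$, and ruling out the first alternative uses the single-class hypothesis only when $J$ is contained in a closed interval. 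The fact that an infinite indecomposable countable labelled chain over a bqo is left- or right-indecomposable is itself a nontrivial theorem of Laver and should be cited, not rederived in a line. As it stands the proposal is a reasonable outline of where the proof lives, but the two recursive constructions at its core do not go through as described.
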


Now we show that the sum of labelled chains $C_f$ in Lemma \ref{Nonisoplc} are equimorphic to the sum of the labelled chain $C$.  

% Lemma 6.7
\begin{lemma} \label{ReducedequimorphP}
Let $C=(I,\ell)$ be a countable labelled chain such that I has no first element and the labels belong to $\mathcal{N}_{\leq\omega}\times\{-1, 0,+1\}$. Then, there is an initial segment J of I such that the restriction $C_J=(J,\ell_{\upharpoonright J})$ is left indecomposable. If J is such an initial segment, then for every map $f:\mathbb{N}\to \{0,1\}$,  $\sum C_f\approx \sum C$ where $C_f$ is constructed as in Lemma \ref{Nonisoplc}.
\end{lemma}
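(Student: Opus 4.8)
The plan is to show mutual embeddability $\sum C_f \leq \sum C$ and $\sum C \leq \sum C_f$, using the decomposition $\sum C_f = \sum C_{f\upharpoonright\bar J}\,\mp\,\sum C_{f\upharpoonright F}$ recorded after Lemma \ref{Nonisoplc} and the analogous split $\sum C = \sum C_{\upharpoonright J}\,\mp\,\sum C_{\upharpoonright F}$ for the same initial segment $J$ on which $C_{\upharpoonright J}$ is left indecomposable (existence of $J$ is Lemma \ref{Labelledchainsproperties}(1)). The tail parts agree: over the final segment $F = I\setminus J$ we have $C_{f\upharpoonright F} = C_{\upharpoonright F}$ by construction, so it suffices to prove $\sum C_{f\upharpoonright\bar J}\approx \sum C_{\upharpoonright J}$ as $NE$-free posets while respecting the $\mp$-gluing, i.e. via an embedding that maps the initial block into the initial block.

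First I would handle the easy direction $\sum C \leq \sum C_f$: the labelled chain $C_{\upharpoonright J}$ embeds into $C_{f\upharpoonright \bar J}$ essentially by inclusion, since $\bar J = J\cup\{b_n,c_n:n<\omega\}$ and each original label $(P_i,r(i))$ either equals $\ell_f(i)$ (when $i\notin Ev(J)$) or has $\bar P_i\supseteq P_i$ with the same value $r_f(i)=r(i)$ and $\bar P_i$ still a chain/antichain; hence $\ell(i)\leq_{\mathcal N_{\leq\omega}\times\{-1,0,+1\}}\ell_f(i)$ coordinatewise, giving a label-preserving order embedding $J\hookrightarrow\bar J$, which lifts to a poset embedding $\sum C_{\upharpoonright J}\hookrightarrow\sum C_{f\upharpoonright\bar J}$ and then, combined with the identity on the common tail, to $\sum C\hookrightarrow\sum C_f$.

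The harder direction is $\sum C_f \leq \sum C$, and this is where I expect the main obstacle. The issue is the inserted pairs $b_n,c_n$ with their prescribed labels: the antichain/chain blocks $\bar P_{b_n},\bar P_{c_n}$ of even size $2f(n)+2$, and the value pattern $r_f(b_n)=0$, $r_f(c_n)=r(a_n)=\pm1$, must all be absorbed into $C_{\upharpoonright J}$. The plan here is to use left-indecomposability of $C_{\upharpoonright J}$ together with its structure theorem, Lemma \ref{Labelledchainsproperties}(2): $C_{\upharpoonright J}$ is an $\omega^*$-sum $\sum^*_{n<\omega}C_n$ with each $C_n$ indecomposable and $\{m:C_n\leq C_m\}$ infinite. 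Since the labels lie in a bqo (by Laver, the class of countable chains labelled by the wqo $\mathcal N_{\leq\omega}$ is bqo), Lemma \ref{ncopies} gives $\underline{n}.C_n\leq C_n$ for every positive $n$; this lets me "make room" inside each indecomposable piece to accommodate finitely many extra inserted elements, and — crucially — because the coinitial sequence $(a_n)$ witnessing $|r(a_n)|=1$ together with the coinitiality of $I_0$ and $I_1$ supplies, cofinally often, both a label with value $0$ and a chain label, I can find within $C_{\upharpoonright J}$ arbitrarily far down a configuration matching $(\bar P_{b_n}, 0)$ followed by $(\bar P_{c_n}, \pm1)$, since any even chain/antichain embeds in a sufficiently large chain/antichain appearing in some $C_m$. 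Iterating down the $\omega^*$-sum and using the infinite-repetition property to realize each inserted pair deep enough, I assemble a label-preserving order embedding $\bar J\hookrightarrow J$ fixing the tail $F$, hence $\sum C_f\hookrightarrow\sum C$. The delicate point to get right is the bookkeeping that keeps the embedding order-preserving across the $\omega^*$-blocks while respecting that each $b_n$ sits immediately above $a_n$ and $c_n$ immediately above $b_n$; I would organize this as a back-and-forth-free single recursion descending through $(C_n)_{n<\omega}$, at stage $n$ embedding the $n$-th chunk of $\bar J$ (including any $b_m,c_m$ falling in that chunk) into a tail segment of $\sum^*_{k\geq N}C_k$ for $N$ chosen large using Lemma \ref{ncopies} and the infiniteness of $\{m:C_k\leq C_m\}$.
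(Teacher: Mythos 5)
Your overall architecture is the same as the paper's: split $\sum C_f$ as $\sum C_{f\upharpoonright\Bar{J}}\mp\sum C_{f\upharpoonright F}$, observe that $f$ does not touch $F$, get $\sum C\le\sum C_f$ by inclusion, and reduce the hard direction to embedding $\sum C_{f\upharpoonright\Bar{J}}$ into $\sum C_{\upharpoonright J}$ via Laver's $\omega^*$-decomposition $\sum^*_{n<\omega}C_n$ of the left-indecomposable part and a recursion that pushes each chunk into a tail using the infinite-repetition property. But your mechanism for absorbing the inserted elements has a genuine gap. You justify placing the pairs $b_n,c_n$ by claiming that ``any even chain/antichain embeds in a sufficiently large chain/antichain appearing in some $C_m$,'' and you conclude by assembling ``a label-preserving order embedding $\Bar{J}\hookrightarrow J$.'' Neither is available: the labels $P_i$ are arbitrary non-empty $NE$-free posets and could all be singletons, so no single label of $C$ need contain even a two-element chain or antichain; coinitiality of $I_0$ and $I_1$ supplies positions with the right $r$-value, not positions with large labels. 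The paper's device is different in kind: an $l$-element chain label $\alpha_{(1,t-1)}$ is embedded into the union $P_{j_l}+\cdots+P_{j_1}$ of $l$ \emph{distinct} summands chosen below the part already embedded, with $r(j_s)=r_f(c_{n,t-1})=\pm1$ constant, so that one point from each summand forms an $l$-chain by the cross-summand comparabilities of $Q^I_r$; likewise an $l$-element antichain goes into $P_{s_1}\oplus\cdots\oplus P_{s_l}$ with $r(s_k)=0$. The resulting map sends a single position of $\Bar{J}$ to a block of positions of $J$, so it is a poset embedding of the sums that is \emph{not} induced by any labelled-chain embedding $\Bar{J}\to J$; the object you propose to assemble does not exist in general.

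Two smaller points. Lemma \ref{ncopies} requires a countably infinite indecomposable labelled chain, so you cannot invoke it for the individual pieces $C_n$, which may well be singletons; the paper applies it once, to the whole left-indecomposable chain, to obtain $\underline{2}.C_{\upharpoonright J}\le C_{\upharpoonright J}$, and interposes an auxiliary chain $D$ on the domain $\underline{2}.J\cup X$ so that $\sum C_{f\upharpoonright\Bar{J}}\le\sum D\le\sum\underline{2}.C_{\upharpoonright J}\le\sum C_{\upharpoonright J}$. Once you replace your single-label absorption by the multi-summand absorption described above and route the argument through this chain of inequalities, the descending recursion through the $C_n$ that you sketch does go through essentially as in the paper.
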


\begin{proof}
The existence of $J$ follows from the fact that the class of countable chains labelled by $\mathcal{N}_{\leq\omega}\times \{-1, 0, +1\}$ is b.q.o and Lemma \ref{Labelledchainsproperties} (1). Let $f:\mathbb{N}\to\{0,1\}$ and $C_f=(\Bar{I},\ell_f)$ be the labelled  chain defined in Lemma \ref{Nonisoplc} and set $P_f:=\sum C_f$. We prove that $\sum C_f\leq \sum C$.  Let $C_{f\upharpoonright\Bar{J}}$, resp $C_{f\upharpoonright F}$, be the restriction of $C_f$ to $\Bar{J}$, resp $F$. We have $C_f=C_{f\upharpoonright\Bar{J}}+C_{f\upharpoonright F}$. By the observation given after Lemma \ref{Nonisoplc}, we get $\sum C_f=\sum C_{f\upharpoonright\Bar{J}}\mp\sum C_{f\upharpoonright F}$. 

To conclude, it suffices to prove that $\sum C_{f\upharpoonright\Bar{J}}$ embeds into $\sum C_{\upharpoonright J}$ and $\sum C_{f\upharpoonright F}$ embeds into $\sum C_{\upharpoonright F}$. Only the first statement needs a proof because $f$ does not affect $F$. In order to prove it, we define an auxiliary labelled chain $D=(M,d)$ as follows: the domain $M$ is $\underline{2}.J\cup X$ where $X=\{b_n, c_n : n<\omega\}$, $b_n$ covers $(1,a_n)$ and $c_n$ covers $b_n$. The labelling $d$ is defined by $d(i)=\ell(j)$ if $i=(m,j)\in \underline{2}.J$ and $d(i)=\ell_f(i)$ if $i\in X$. We prove that the following inequalities hold.
$$\sum C_{f\upharpoonright\Bar{J}}\leq\sum D\leq \sum\underline{2}.C_{\upharpoonright J}\leq \sum C_{\upharpoonright J}.$$

For the first inequality, define $h:\Bar{J}\to \underline{2}.J\cup X$ by $h(j)=(0,j)$ for $j\in J$ and $h(i)=i$ for $i\in X$. Then $h$ is an embedding and due to the definition of $d$, it follows that $C_{f\upharpoonright \Bar{J}}\leq D$ and consequently $\sum C_{f\upharpoonright \Bar{J}}\leq \sum D$.

For the second inequality, recall that $C_{\upharpoonright J}$ is left indecomposable. It can be verified that $\underline{2}.C_{\upharpoonright J}$ is left indecomposable.  Thus, by Lemma \ref{Labelledchainsproperties} (2), we can write $\underline{2}.C_{\upharpoonright J}$ as $\sum^*_{n<\omega} C_n$ where each $C_n$ is indecomposable and for each $n<\omega$, the set of $m$ such that $C_n$ embeds into $C_m$ is infinite. For each $n<\omega$, let $J_n$ be the domain of $C_n$ and let $L_n:=J_n\cup \{b_m, c_m : a_m\in J_n\}$. Set $D_n:=(L_n, \ell_{f\upharpoonright L_n})$. Then, $D=\sum^*_{n<\omega} D_n$. We define an embedding from $\sum D$ to $\sum \underline{2}.C_{\upharpoonright J}$ by induction. Let $n<\omega$. First we show that the poset labelled sum $\sum D_n$ embeds in the poset labelled sum obtained by a finite sum of $C_m$, that is $\sum D_n\leq \sum(C_{m+k}\cdots+C_m)$ for some $m, k<\omega$. The  labelled chain $D_n$ consists of $C_n$ plus finitely many elements, each labelled by a 2 or 4-element chain or antichain. Thus, $D_n$ can be written 
\begin{multline*} 
D_{n,0}+\alpha_{(0,0)}+\alpha_{(1,0)}+D_{n,1}+\alpha_{(0,1)}+\alpha_{(1,1)}+\cdots+D_{n,k_n-1}+ \\ \alpha_{(0,k_n-1)}+  \alpha_{(1,k_n-1)}+D_{n,k_n}
\end{multline*}

with $D_{n,0}+\cdots+D_{n,k_n}=C_n$ and for each $0\leq i\leq k_n-1$, $\alpha_{(0,i)}$, resp $\alpha_{(1,i)}$, is a singleton belonging to $X:=\{b_n, c_n : n\in\mathbb{N}\}$ labelled by a 2 or 4-element antichain, resp chain.  Suppose that the poset labelled sum
$$\sum(D_{n,t} + \cdots+D_{n,k_n-1}+\alpha_{(0,k_n-1)}+\alpha_{(1,k_n-1)}+D_{n,k_n})$$
where $0< t \leq k_n$, has been embedded in $\sum(C_{\varphi(t)}+\cdots+C_m)$ for some $m$. 
We know that $\alpha_{(1,t-1)}$ is an $l$-element chain where $l=2$ or 4. Since $C_n$ embeds in infinitely many $C_m$, the coinitial sequence $(a_n)_{n<\omega}$ has infinitely many elements with the same label of $c_{n,t-1}$ where $c_{n,t-1}\in X$ is the singleton corresponding to $\alpha_{(1,t-1)}$. Select $l$ elements $j_1, \ldots, j_l$ of $J$ such that $j_1,\ldots,j_l<L_{n,t}$ where $L_{n,t}$ is the domain of $D_{n,t}$ and $r(j_s)=r_f(c_{n,t-1})$. Without loss of generality assume that $j_l < \cdots < j_1$. If $r(j_s)=-1$, resp $+1$, then, $P_{l1}:=P_{j_l}+\cdots+P_{j_1}$, resp $P_{1l}:=P_{j_1} + \cdots + P_{j_l}$, contains a chain of size $l$. Thus, $\alpha_{(1,t-1)}$ embeds in $P_{l1}$, resp $P_{1l}$. Similarly, for $\alpha_{(0,t-1)}$ which is an $l$-element antichain, choose $s_l < \cdots < s_1\in J$ with $s_1 < j_l$ and $r(s_k)=0$. Such elements exist since $I_0=\{i\in I : r(i)=0\}$ is coinitial in $I$. It is clear that $\alpha_{(0,t-1)}$ embeds in $P_{s_1}\oplus \cdots \oplus P_{s_l}$. Now, consider $D_{n,t-1}$. We have $D_{n,t-1}\leq C_n$. Since the set of $m$ such that $C_n$ embeds in $C_m$ is infinite, we can find $p$ such that $\sum (D_{n,t-1}+\cdots+D_{n,k_n})$ embeds in $\sum(C_{m+p}+\cdots+C_m)$. Then set $\varphi(t-1)=m+p$. This proves the induction step. Thus, $\sum D_n$ embeds in $\sum(C_{m+k}+\cdots+C_m)$ for some $k$.  

Now, let $n<\omega$ and suppose that the poset labelled sum $\sum(D_{n-1}+\cdots+D_0)$ has been embedded in $\sum(C_{\varphi(n-1)}+\cdots+C_0)$. By the argument above, $\sum D_n\leq \sum (C_{m+k}+\cdots+C_m)$ for some $m>\varphi(n-1)$ and $k$. Hence, $\sum(D_n+\cdots+D_0)\leq \sum(C_{m+k}+\cdots+C_0)$. Set $\varphi(n)=m+k$. This proves the second inequality.

For the last inequality, since $C_{\upharpoonright J}$ is left indecomposable, by Lemma \ref{ncopies} we have $\underline{2}.C_{\upharpoonright J}\leq C_{\upharpoonright J}$ which implies that $\sum \underline{2}.C_{\upharpoonright J}\leq \sum C_{\upharpoonright J}$. 
\end{proof}

Lemmas \ref{Nonisoplc} and \ref{ReducedequimorphP} imply the following. 

% Theorem 6.8
\begin{theorem}  \label{Reduced}
Let P be the sum of a countable labelled chain $C:=(I,\ell)$ where $I$ has no least element and $\ell(i)=(P_i,r(i))\in \mathcal{N}_{\leq\omega}\times\{-1, 0, +1\}$ such that r takes 0 and $\pm 1$ densely. Then  $Sib(P)=2^{\aleph_0}$.    
\end{theorem}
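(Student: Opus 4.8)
The plan is to deduce Theorem~\ref{Reduced} directly by chaining the two preceding lemmas. First I would observe that the hypothesis on $P$ is exactly the hypothesis of Lemma~\ref{Nonisoplc} and of Lemma~\ref{ReducedequimorphP}: $P=\sum C$ for a countable labelled chain $C=(I,\ell)$ with $I$ having no least element and $\ell(i)=(P_i,r(i))\in\mathcal N_{\leq\omega}\times\{-1,0,+1\}$, with $I_0$ and $I_1$ coinitial in $I$ (this is what ``$r$ takes $0$ and $\pm1$ densely'' unpacks to, since coinitiality of the two sets in a chain with no least element is the relevant density statement). So both lemmas apply verbatim.

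Next I would invoke Lemma~\ref{Nonisoplc} to obtain the family $\{C_f : f\in\{0,1\}^{\mathbb N}\}$ of $2^{\aleph_0}$ labelled chains with $\sum C_f\ncong\sum C_g$ whenever $f\neq g$; this gives $2^{\aleph_0}$ pairwise non-isomorphic posets $\sum C_f$. Then I would fix an initial segment $J$ of $I$ with $C_J$ left-indecomposable (its existence is the first assertion of Lemma~\ref{ReducedequimorphP}, coming from the b.q.o.\ property of countable chains labelled over $\mathcal N_{\leq\omega}\times\{-1,0,+1\}$ together with Lemma~\ref{Labelledchainsproperties}(1)), and apply the second assertion of Lemma~\ref{ReducedequimorphP}: for every $f$, $\sum C_f\approx\sum C=P$. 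Hence each $\sum C_f$ is a sibling of $P$, and we have exhibited $2^{\aleph_0}$ pairwise non-isomorphic siblings, so $Sib(P)\geq 2^{\aleph_0}$. For the reverse inequality I would note that $P$ is countable, so each sibling has a countable domain, and there are at most $2^{\aleph_0}$ isomorphism types of countable structures over a fixed countable vocabulary; therefore $Sib(P)\leq 2^{\aleph_0}$, giving $Sib(P)=2^{\aleph_0}$.

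The proof is essentially a bookkeeping assembly, so there is no serious obstacle left once Lemmas~\ref{Nonisoplc} and \ref{ReducedequimorphP} are in hand; the only point requiring a word of care is checking that the construction $C_f$ in Lemma~\ref{Nonisoplc} is built over \emph{the same} initial segment $J$ used in Lemma~\ref{ReducedequimorphP}, which is fine because Lemma~\ref{ReducedequimorphP} explicitly says ``if $J$ is such an initial segment, then for every $f$, $\sum C_f\approx\sum C$ where $C_f$ is constructed as in Lemma~\ref{Nonisoplc}.'' Thus the two lemmas are designed to compose. I would write the proof in three short sentences: apply Lemma~\ref{Nonisoplc} for the lower bound, apply Lemma~\ref{ReducedequimorphP} to see the constructed posets are genuine siblings, and use the counting bound on countable structures for the upper bound.

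\begin{proof}
By hypothesis $P=\sum C$ where $C=(I,\ell)$ is a countable labelled chain, $I$ has no least element, $\ell(i)=(P_i,r(i))\in\mathcal N_{\leq\omega}\times\{-1,0,+1\}$, and $r$ takes the values $0$ and $\pm1$ densely, i.e.\ $I_0=\{i\in I:r(i)=0\}$ and $I_1=\{i\in I:|r(i)|=1\}$ are coinitial in $I$. By the first assertion of Lemma~\ref{ReducedequimorphP} there is an initial segment $J$ of $I$ such that $C_J=(J,\ell_{\upharpoonright J})$ is left indecomposable; fix such a $J$. Applying Lemma~\ref{Nonisoplc} with this $C$ and $J$, we obtain labelled chains $C_f$, $f\in\{0,1\}^{\mathbb N}$, such that $\sum C_f\ncong\sum C_g$ for $f\neq g$; in particular the posets $\{\sum C_f : f\in\{0,1\}^{\mathbb N}\}$ fall into $2^{\aleph_0}$ distinct isomorphism classes. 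By the second assertion of Lemma~\ref{ReducedequimorphP}, $\sum C_f\approx\sum C=P$ for every $f$, so each $\sum C_f$ is a sibling of $P$. Hence $Sib(P)\geq 2^{\aleph_0}$. Conversely, $P$ is countable, so every sibling of $P$ has a countable domain, and there are at most $2^{\aleph_0}$ isomorphism types of countable posets; thus $Sib(P)\leq 2^{\aleph_0}$. Therefore $Sib(P)=2^{\aleph_0}$.
\end{proof}
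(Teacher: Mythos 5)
Your proof is correct and follows exactly the paper's route: the paper states Theorem \ref{Reduced} as an immediate consequence of Lemmas \ref{Nonisoplc} and \ref{ReducedequimorphP}, which is precisely the assembly you carry out (with the routine upper bound $Sib(P)\leq 2^{\aleph_0}$ for countable structures made explicit).
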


%%%%%%%%%%%%%%%%%%%%%%%%%%
% Section 6.2
\subsection{Direct Sum}
 
Throughout this subsection $P$ is a countable direct sum of at least two non-empty connected $N$-free posets. Thus, it is disconnected.  
In this section we prove that $P$ has one or infinitely many siblings on condition that this property holds for each component of $P$. 

% Lemma 6.9
\begin{lemma} \label{Connecteddisconnected}
If some sibling of $P$ is  connected, then $Sib(P)=\infty$.
\end{lemma}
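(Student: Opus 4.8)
The plan is to first extract, from a connected sibling, a component of $P$ that is already equimorphic to $P$, and then to manufacture infinitely many siblings by duplicating the remaining components, using an isomorphism invariant of direct sums to keep the copies distinct.

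\emph{Reduction.} Suppose $P'$ is connected and $P'\approx P$. An order embedding is in particular an embedding of comparability graphs, so the image of the connected poset $P'$ under any embedding into $P$ is connected, hence lies in a single component $C_j$ of $P$; thus $P'\hookrightarrow C_j$. Chaining $C_j\hookrightarrow P\hookrightarrow P'\hookrightarrow C_j$ yields $C_j\approx P'\approx P$. Write $P=C_j\oplus R$ with $R=\bigoplus_{i\neq j}C_i$, which is non-empty since $P$ has at least two components. As the sibling number is constant on equimorphy classes, $Sib(P)=Sib(C_j)$, so it suffices to exhibit infinitely many pairwise non-isomorphic siblings.

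\emph{The family $P_n=C_j\oplus R^{\oplus n}$.} From $C_j\approx C_j\oplus R$ fix an embedding $g: C_j\oplus R\hookrightarrow C_j$; then $C_j$ contains the copy $g(C_j)\cong C_j$ together with the copy $g(R)\cong R$, disjoint from and incomparable to it. Iterating this inside the copy $g(C_j)$, then inside the next copy, and so on $n$ times, gives, inside $C_j$, a copy of $C_j$ together with $n$ pairwise incomparable disjoint copies of $R$, each incomparable to it; hence $C_j\oplus R^{\oplus n}\hookrightarrow C_j$ for every $n$. Thus for $n\geq 1$ the poset $P_n:=C_j\oplus R^{\oplus n}$ satisfies $P\hookrightarrow P_n\hookrightarrow C_j\approx P$, i.e. $P_n\approx P$; and $P_n$ is countable and $NE$-free by Corollary~\ref{SubNEposet}. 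If some isomorphism type of component occurs in $R$ with only finite multiplicity, then $P_n$ has strictly more components of that type as $n$ grows, so the $P_n$ are pairwise non-isomorphic and $Sib(P)=\infty$.

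\emph{The degenerate case, and the main obstacle.} It remains to treat the case in which every component type of $R$ occurs in $R$ with multiplicity $\aleph_0$; then $P_n\cong P$ for all $n$ and the preceding construction collapses. Here $C_j$ is a connected $NE$-free poset with at least two elements, so by Theorem~\ref{Pstructure} it is either a sum over a labelled chain with no least element or a linear sum of at least two posets with property CCGC. In the first case Theorem~\ref{Reduced} already gives $Sib(C_j)=2^{\aleph_0}$, hence $Sib(P)=\infty$. In the linear-sum case, analysing where $g$ can send the incomparable copy of $R$ forces the copy of $C_j$ to sit inside a single summand $B$ of $C_j$, so that $C_j\approx B\approx B\oplus R$ with $B$ a proper summand with property CCGC; one then finishes either by continuing this descent until the labelled-chain case is reached, or by invoking the analysis of linear sums together with the induction hypothesis of Theorem~\ref{AltThomasseNEP} for the strictly smaller components of $R$ (replacing copies of an embeddability-maximal such component by its siblings). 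I expect this degenerate case to be the crux: because every component type of $R$ already occurs $\aleph_0$ times, the naive duplication arguments are stable up to isomorphism, and the real work is to produce, and then verify the distinctness of, siblings whose direct-sum multiset of components genuinely differs from that of $P$.
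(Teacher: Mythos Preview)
Your reduction to a component $C_j\approx P$ is fine, and the family $P_n=C_j\oplus R^{\oplus n}$ does give siblings. The difficulty you yourself flag in the ``degenerate case'' is real, and your proposed resolution has two problems. First, it is circular: the linear-sum analysis (Proposition~\ref{InfiniteDproperty}, Lemma~\ref{Finitesummand}) explicitly invokes Lemma~\ref{Connecteddisconnected}, and the induction in the proof of Theorem~\ref{AltThomasseNEP} rests on it as well, so you cannot appeal to either here. Second, even setting circularity aside, the ``descent until the labelled-chain case is reached'' is not shown to terminate, and the sketch that follows is not a proof.

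The paper sidesteps the whole issue with a much simpler invariant. Instead of duplicating $R$ (whose component-multiset may be stable under duplication), it adds \emph{isolated points}: from $P'\oplus 1\hookrightarrow Q\oplus 1\hookrightarrow P\hookrightarrow P'$ one gets $P'\oplus\bar K_n\hookrightarrow P'$ for every $n$, hence $P'\oplus\bar K_n\approx P$. Since $P'$ is connected with at least two elements, $P'\oplus\bar K_n$ has exactly $n$ trivial components, so these siblings are pairwise non-isomorphic with no case analysis at all. The moral is that choosing the right invariant (number of singleton components, which starts at zero for a connected $P'$) makes your degenerate case disappear; duplicating $R$ was the wrong move because $R$ need not contain any component type of controlled multiplicity.
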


\begin{proof}
Let $P'\approx P$ where $P'$ is connected. So, $P'$ embeds into some component $Q$ of $P$. Since $P$ has at least two non-empty components, $P'\oplus 1\hookrightarrow Q\oplus 1\hookrightarrow P\hookrightarrow P'$. So, for every $n$, $P'\oplus \Bar{K}_n\hookrightarrow P'$ where $\Bar{K}_n$ is an antichain of size $n$. Since $P'$ is connected and $P'\approx P'\oplus \Bar{K}_n$, $P'$ and consequently $P$ has infinitely many pairwise non-isomorphic siblings. 
\end{proof}

% Lemma 6.10
\begin{lemma} \label{Infinitecomponent} 
If some component of P has infinitely many siblings, then \\ $Sib(P)=\infty$. 
\end{lemma}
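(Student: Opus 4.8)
*If some component of $P$ has infinitely many siblings, then $Sib(P)=\infty$.*

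The plan is to transfer the infinitely many pairwise non-isomorphic siblings of a component $Q_0$ of $P$ into infinitely many pairwise non-isomorphic siblings of $P$ itself, by substituting each sibling of $Q_0$ back into the direct sum in place of $Q_0$. Write $P=\bigoplus_{i\in I} Q_i$ as a direct sum of its connected components, and suppose the component $Q_0$ satisfies $Sib(Q_0)=\infty$; let $(Q_0^{(n)})_{n<\omega}$ be pairwise non-isomorphic siblings of $Q_0$. For each $n$, set $P^{(n)}:=Q_0^{(n)}\oplus\bigoplus_{i\in I\setminus\{0\}} Q_i$, i.e. $P$ with the component $Q_0$ replaced by $Q_0^{(n)}$.

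First I would check that each $P^{(n)}$ is a sibling of $P$. Since $Q_0^{(n)}\approx Q_0$, fix embeddings $Q_0\hookrightarrow Q_0^{(n)}$ and $Q_0^{(n)}\hookrightarrow Q_0$; these extend coordinatewise (using the identity on the remaining $Q_i$) to embeddings $P\hookrightarrow P^{(n)}$ and $P^{(n)}\hookrightarrow P$, because in a direct sum there is no comparability across distinct components, so an embedding of each component gives an embedding of the whole sum. Hence $P^{(n)}\approx P$. (If $Q_0^{(n)}$ is still connected, $P^{(n)}$ is literally a direct sum of connected $NE$-free posets; in any case $P^{(n)}$ is $NE$-free by Corollary~\ref{SubNEposet}, being a poset substitution into an antichain of $NE$-free posets, but all we really need is that it is a sibling of $P$.)

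The main obstacle is showing the $P^{(n)}$ are pairwise non-isomorphic, since an isomorphism $P^{(m)}\cong P^{(n)}$ need not send the distinguished component $Q_0^{(m)}$ to $Q_0^{(n)}$. To handle this I would first observe that an isomorphism between two posets restricts to a bijection between their sets of connected components that is itself an isomorphism on each component; so $P^{(m)}\cong P^{(n)}$ would induce a bijection $\sigma$ between the multiset of components $\{Q_0^{(m)}\}\cup\{Q_i : i\neq 0\}$ and $\{Q_0^{(n)}\}\cup\{Q_i : i\neq 0\}$, with each component mapped isomorphically to its image. If this were possible for infinitely many distinct $n$ with a fixed $m$, then $Q_0^{(m)}$ would be isomorphic to one of the countably many "types" of component occurring among the $Q_i$ ($i\neq 0$), or else $\sigma$ matches $Q_0^{(m)}\cong Q_0^{(n)}$ directly — contradicting that the $Q_0^{(n)}$ are pairwise non-isomorphic. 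Making this precise requires a small bookkeeping argument: among $\{Q_0^{(n)} : n<\omega\}$, at most one isomorphism type can coincide with any given isomorphism type of a component $Q_i$ ($i\neq 0$), and distinct $Q_0^{(n)}$ are non-isomorphic to each other; a pigeonhole over the (countably many) component types then yields an infinite subfamily of the $P^{(n)}$ that are pairwise non-isomorphic, which already gives $Sib(P)=\infty$. The one point needing care is that $P$ is merely countable, so it may have infinitely many distinct component types — but since the family $(Q_0^{(n)})$ is infinite and pairwise non-isomorphic, at most one $n$ can have $Q_0^{(n)}$ isomorphic to each component type of $P$, so discarding those finitely-or-countably-many "collision" indices and applying pigeonhole to the rest (or more directly: build the subfamily by a diagonal choice avoiding previously used component types) leaves an infinite pairwise non-isomorphic subfamily, completing the proof.
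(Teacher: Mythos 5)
There is a genuine gap in the non-isomorphism step, and it traces back to your choice to replace only the single distinguished component $Q_0$. Up to isomorphism, $P^{(n)}$ is determined by the multiset of isomorphism types of its components, and your construction changes that multiset only by incrementing the multiplicity of the type $[Q_0^{(n)}]$ by one. If $[Q_0^{(n)}]$ already occurs with \emph{infinite} multiplicity among the $Q_i$ ($i\neq 0$), then $P^{(n)}\cong P$. Concretely, let $R_1,R_2,\ldots$ enumerate the isomorphism types of the (connected) siblings of a connected poset $Q$ with $Sib(Q)=\infty$, and let $P=\bigoplus_{n\geq 1}\bigoplus_{\omega}R_n$. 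Then every sibling of the component $Q_0:=R_1$ is isomorphic to some $R_n$, every such type already occurs infinitely often among the components of $P$, and hence \emph{every} $P^{(n)}$ you can build is isomorphic to $P$, no matter how you choose the representatives $Q_0^{(n)}$. Your proposed repairs do not survive this: the set of ``collision indices'' can be all of $\mathbb{N}$, so discarding them leaves nothing, and the diagonal choice has no types left to avoid. (A secondary issue, which you partly flag, is that when $Q_0^{(n)}$ is disconnected the components of $P^{(n)}$ are not $\{Q_0^{(n)}\}\cup\{Q_i\}_{i\neq 0}$, which further muddies the multiset bookkeeping.)

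The paper sidesteps all of this by replacing \emph{every} component of $P$ equimorphic to $Q$ by $Q_n$ simultaneously. In the resulting $P_n$, all components equimorphic to $Q$ carry the single isomorphism type $[Q_n]$, so an isomorphism $P_m\cong P_n$ must send a component equimorphic to $Q$ (e.g.\ a copy of $Q_m$) onto a component of $P_n$ equimorphic to $Q$, which can only be a copy of $Q_n$; this forces $Q_m\cong Q_n$ at once, with no multiplicity analysis needed. In the example above this produces the pairwise non-isomorphic siblings $\bigoplus_\omega R_n$ of $P$. You would need to adopt this global replacement (or add an argument covering the infinite-multiplicity case, e.g.\ by exploiting that $P$ then has infinitely many non-trivial components) for your proof to go through.
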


\begin{proof}
Let $P$ satisfy the conditions of the lemma. Let $Q$ be a  component of $P$ with infinitely many pairwise non-isomorphic siblings $Q_1, Q_2, Q_3, \ldots$. For each $n$, let $P_n$ be the poset resulting from $P$ by replacing every component of $P$ which is equimorphic to $Q$ by $Q_n$. Now suppose that $P_m\cong P_n$ for $m\neq n$. Consider a component $Q'$ of $P_m$ equimorphic to $Q_m$. We know that $Q'$ is isomorphic to some component $Q''$ of $P_n$. We have $Q''\cong Q'\approx Q_m\approx Q$ which implies that $Q''=Q_n$ because $Q''$ is a component of $P_n$. But then $Q_m\cong Q_n$, a contradiction. 
Therefore, the resulting posets $P_n$ are pairwise non-isomorphic siblings of $P$. 
\end{proof}

% Lemma 6.11
\begin{lemma} \label{increasing}
Suppose that P has infinitely many non-trivial components. Then $Sib(P)=\infty$. 
\end{lemma}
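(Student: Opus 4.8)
The plan is to exhibit, directly, a countably infinite family of pairwise non-isomorphic siblings of $P$. Write $P=R\oplus\overline{K}_\kappa$, where $R$ is the direct sum of all the non-trivial components of $P$ and $\overline{K}_\kappa$ is the direct sum of the singleton components of $P$ (an antichain of size $\kappa$ for some $\kappa\in\{0,1,2,\dots,\aleph_0\}$). By hypothesis $R$ has $\aleph_0$ components, each connected and with at least two elements. Since Thomass\'e's theorem says that $\mathcal{N}_{\leq\omega}$ is well-quasi-ordered under embeddability, from the sequence of non-trivial components of $P$ I would extract an infinite subsequence $C_1\hookrightarrow C_2\hookrightarrow C_3\hookrightarrow\cdots$ of pairwise distinct components, so that, by composing embeddings, $C_i\hookrightarrow C_{2i}$ for every $i$. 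The claim is then that the posets $R\oplus\overline{K}_n$, $n\in\mathbb{N}$, are the siblings we want.

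The key step is to show that $R\oplus\overline{K}_m\approx R$ for every $m\le\aleph_0$: taking $m=\kappa$ gives $P\approx R$, and taking $m=n$ finite gives $P\approx R\oplus\overline{K}_n$ for every $n$. One inclusion, $R\hookrightarrow R\oplus\overline{K}_m$, is trivial. For $R\oplus\overline{K}_m\hookrightarrow R$ I would build an embedding that is injective on components: send the component $C_i$ of $R\oplus\overline{K}_m$ into the component $C_{2i}$ of $R$ (possible since $C_i\hookrightarrow C_{2i}$); send every other non-trivial component identically onto the corresponding component of $R$; and send the at most $\aleph_0$ added singletons one by one into the components $C_1,C_3,C_5,\dots$ of $R$, which are not otherwise used and each of which, having at least two elements, admits an embedding of a one-point poset. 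The images occupy pairwise distinct components of $R$, so this is an embedding.

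Finally, the posets $R\oplus\overline{K}_n$ are pairwise non-isomorphic: the number of isolated vertices of the comparability graph is an isomorphism invariant, and $R\oplus\overline{K}_n$ has exactly $n$ of them because every component of $R$ has at least two vertices. Hence $P$ has infinitely many pairwise non-isomorphic siblings, that is $Sib(P)=\infty$. The one point needing care---and the reason the hypothesis is exactly what is used---is the absorption of \emph{all} singleton components of $P$ (possibly $\aleph_0$ many) into the non-trivial part while still freeing infinitely many slots for the new singletons; having infinitely many components each of size at least two, together with the chain $C_1\hookrightarrow C_2\hookrightarrow\cdots$, is precisely what makes this bookkeeping succeed, and in particular no appeal to the inductive hypothesis or to Lemmas~\ref{Connecteddisconnected} and \ref{Infinitecomponent} is needed.
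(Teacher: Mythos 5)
Your proposal is correct and follows essentially the same route as the paper: extract an increasing chain of non-trivial components via wqo, absorb all trivial components into odd-indexed members of the chain while shifting the chain into its even-indexed members, and distinguish the siblings $R\oplus\overline{K}_n$ by counting isolated vertices of the comparability graph. The only cosmetic difference is that you prove $R\oplus\overline{K}_m\hookrightarrow R$ uniformly for all $m\le\aleph_0$ and then specialize, whereas the paper first establishes $P\approx P'$ and then repeats the argument for each $P'\oplus\overline{K}_n$.
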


\begin{proof}
Since $P$ has infinitely many non-trivial components and $\mathcal{N}_{\leq\omega}$ is w.q.o, there is an increasing sequence $(P_n)_{n<\omega}$  of non-trivial components of $P$. Let $\mathcal{Q}$ be the direct sum of the non-trivial components of $P$ other than the $P_n$. We have $P=\bigoplus_n P_n\oplus \mathcal{Q}\oplus A$ where $A$ is the direct sum of the trivial components of $P$. Notice that since $P$ is countable, so is $A$. Therefore, $A$ embeds in $\bigoplus_n P_{2n+1}$. Also $\bigoplus_n P_n$ embeds into $\bigoplus_n P_{2n}$. Hence, $P$ embeds into $P':=\bigoplus_n P_n\oplus \mathcal{Q}$. That is, $P'\approx P$. Similarly, $P'\oplus \Bar{K}_n \approx P'\approx P$ where $\Bar{K}_n$ is an antichain of size $n$. Since $P'\oplus\Bar{K}_n$ has exactly $n$ trivial components, it follows that $P$ has infinitely many pairwise non-isomorphic siblings. 
\end{proof}

Hence, by Lemmas  \ref{Connecteddisconnected}, \ref{Infinitecomponent} and \ref{increasing} we have the following. 

% Proposition 6.12
\begin{proposition} \label{InfinitesiblingnumberofP}
Let P be a countable direct sum of NE-free posets with at least two non-empty components. If P is a sibling of some connected NE-free poset, or some component of P has infinitely many siblings, or P has infinitely many non-trivial components, then $Sib(P)=\infty$. 
\end{proposition}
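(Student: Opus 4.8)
The plan is to observe that Proposition \ref{InfinitesiblingnumberofP} is merely a repackaging of Lemmas \ref{Connecteddisconnected}, \ref{Infinitecomponent} and \ref{increasing} into a single statement: each of its three disjunctive hypotheses is, up to trivial rephrasing, exactly the hypothesis of one of those lemmas. So I would fix $P$, a countable direct sum of $NE$-free posets with at least two non-empty components (hence disconnected), assume one of the three listed conditions holds, and split into three cases.

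First case: $P$ is a sibling of some connected $NE$-free poset $R$. Since equimorphy is a symmetric relation, $R$ is itself a connected sibling of $P$, so the hypothesis of Lemma \ref{Connecteddisconnected} is met and $Sib(P)=\infty$. The only point worth spelling out here is precisely this symmetry — that ``$P$ is a sibling of $R$'' and ``$R$ is a sibling of $P$'' say the same thing — so that the superficially different phrasings in the proposition and in Lemma \ref{Connecteddisconnected} line up.

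Second case: some component $Q$ of $P$ has infinitely many siblings. This is verbatim the hypothesis of Lemma \ref{Infinitecomponent}, which gives $Sib(P)=\infty$ immediately. Third case: $P$ has infinitely many non-trivial components, which is verbatim the hypothesis of Lemma \ref{increasing}, again yielding $Sib(P)=\infty$. In every case $Sib(P)=\infty$, which completes the proof.

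Since the substantive work has already been carried out in the three lemmas, there is no genuine obstacle to overcome. The one thing to be careful about is not conflating the three hypotheses with one another — for instance, a disconnected sibling of $P$ carrying many trivial components is the business of the third lemma, not the first — and to note that the standing assumption ``$P$ is a direct sum with at least two non-empty components'' is compatible with the standing assumptions of each of the three lemmas, which it is by construction.
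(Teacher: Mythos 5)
Your proof is correct and is exactly the paper's argument: the proposition is stated there as an immediate consequence of Lemmas \ref{Connecteddisconnected}, \ref{Infinitecomponent} and \ref{increasing}, with the three disjunctive hypotheses matching the three lemmas case by case. Your added remark about the symmetry of equimorphy in the first case is a harmless, accurate clarification.
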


% Lemma 6.13
\begin{lemma} \label{Finitenontrivial} 
If  P has only finitely many non-trivial components and each component has only one  sibling, then $Sib(P)=1$.
\end{lemma}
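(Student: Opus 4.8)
The plan is to show that $P$ is in fact equimorphic to a finite poset built from a finite "context" together with copies of its non-trivial components, so that any sibling of $P$ must look the same up to isomorphism. First I would write $P = A \oplus B_1 \oplus \cdots \oplus B_k$, where $A$ is the direct sum of the (possibly infinitely many) trivial components of $P$ and $B_1, \dots, B_k$ are the finitely many non-trivial components, each of which by hypothesis has exactly one sibling, i.e.\ is determined up to isomorphism among its equimorphy class. Since $P$ is countable, $A$ is a countable antichain; but a countable antichain is equimorphic to a one-element antichain only if it is finite, so I must be a little careful — if $A$ is infinite then $P$ has infinitely many trivial components and we would want to be in the situation of Lemma~\ref{increasing} instead. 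So the first real step is to observe that under the hypotheses of this lemma we may assume $A$ has only finitely many elements: either $A$ is finite, or $A$ is an infinite antichain, in which case I claim $P$ still has infinitely many siblings. Actually the cleanest route is: if $A$ is infinite, then since each $B_i$ is non-trivial it contains a comparable pair, hence an antichain of size $1$ sits inside it redundantly, and one shows $P \approx P \oplus \bar K_n$ for all $n$ by absorbing finitely many isolated points into $A$; this gives $Sib(P) = \infty$, which contradicts nothing but rather is already covered, so in the remaining case $A$ is finite and $P$ is a \emph{finite} poset.

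Once $P$ is finite, the statement $Sib(P) = 1$ is essentially the fact that equimorphic finite structures are isomorphic, stated already in the introduction. So the heart of the argument is the reduction above, and then I would spell out: let $P'$ be any sibling of $P$. By Proposition~\ref{InfinitesiblingnumberofP} (contrapositive), $P'$ is disconnected, no component of $P'$ has infinitely many siblings, and $P'$ has finitely many non-trivial components; moreover $P'$ has finitely many trivial components by the same antichain absorption argument. Hence $P'$ is finite. Then $P \approx P'$ with both finite forces $P \cong P'$.

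The step I expect to be the main obstacle is handling the trivial components correctly — making the case distinction on whether $A$ is finite or infinite, and in the infinite case verifying that we genuinely land in the hypotheses of Lemma~\ref{increasing} or can rerun that argument. One subtlety: Lemma~\ref{increasing} assumes infinitely many \emph{non-trivial} components, which need not hold; so for infinitely many trivial components I would argue directly that a countably infinite antichain $A$ satisfies $A \approx A \oplus \bar K_n$, hence $P \approx P \oplus \bar K_n$ for all $n$, and since the $P \oplus \bar K_n$ have differing finite numbers of... wait, they all have infinitely many trivial components, so that does not separate them. The correct move instead: if $A$ is infinite, write $A = A' \oplus \bar K_\omega$ and note $P \approx P$ but we can peel off a \emph{new} trivial component only if $P \oplus 1 \not\hookrightarrow P$; here $P \oplus 1 \hookrightarrow P$ trivially since $A$ is infinite, so this does not immediately give new siblings. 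The honest resolution is that an infinite-antichain summand does \emph{not} by itself produce infinitely many siblings, so the real content is: \textbf{if $A$ is infinite, one must still check $Sib(P)=1$ fails or holds by other means} — and in fact the hypothesis "only finitely many non-trivial components and each has one sibling" together with an infinite $A$ should be shown to still force finiteness of the sibling count by reducing $A$ itself (a countable antichain has one sibling among antichains, but $\bar K_\omega \approx \bar K_\omega \oplus 1$), which actually gives $Sib(P) = 1$ as well because $A$'s isomorphism type is pinned down by countability. So the plan's final shape is: split on $|A| < \omega$ versus $|A| = \omega$; in the first case $P$ is finite and we are done; in the second case $P' \approx P$ forces $P'$ to also have an infinite trivial part and the same finite non-trivial components, whence $P' \cong P$. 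I would present both cases uniformly by noting that the isomorphism type of a countable direct sum is determined by its multiset of component isomorphism types, each of which is forced.
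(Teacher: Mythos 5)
Your reduction collapses at the step ``in the first case $P$ is finite and we are done.'' The hypothesis is that $P$ has finitely many non-trivial \emph{components}, not that those components are finite: $P=\mathbb{Z}\oplus\mathbb{Z}$ (two disjoint copies of the integer chain, each of which has exactly one sibling) satisfies the hypotheses with $A=\emptyset$, yet $P$ is infinite. So the case split on whether $A$ is finite or infinite does not reduce anything to ``equimorphic finite structures are isomorphic,'' and the first branch of your plan proves nothing; the trivial part $A$ is in fact a side issue, handled in the paper only at the very end by extending an index map.

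The genuine content of the lemma, which your proposal never supplies, is the claim you state as ``each component isomorphism type is forced,'' i.e.\ that a sibling $P'\subseteq P$ has its components in bijection with those of $P$ with corresponding components equimorphic (hence isomorphic, by the one-sibling hypothesis). A priori an embedding $f\colon P\to P'$ could send two distinct components $P_i,P_j$ into a single component $P_k$. The paper rules this out by setting $\hat f(i)=k$ when $f(P_i)\subseteq P_k$ (well defined because components are connected) and proving $\hat f$ injective: if $\hat f(i)=\hat f(j)=k$ with $i\neq j$, then $P_i\oplus P_j\hookrightarrow P_k$; one checks $k\neq i,j$ and $\hat f(k)\neq i,j,k$, since any such coincidence makes a connected component equimorphic to a disconnected poset and hence gives it infinitely many siblings by Lemma~\ref{Connecteddisconnected}, while iterating $\hat f$ on $k$ otherwise produces infinitely many non-trivial components, contradicting the hypothesis. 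Injectivity plus finiteness of the non-trivial index set makes $\hat f$ a permutation there, and following orbits identifies each $P_i$ with an isomorphic component of $P'$. Your appeal to the contrapositive of Proposition~\ref{InfinitesiblingnumberofP} cannot substitute for this: applying it to $P'$ presupposes $Sib(P)<\infty$, which is part of what is being proved. Without the injectivity argument the proof does not go through.
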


\begin{proof}
Set $P:=\bigoplus_{i<l}P_i\oplus A$ where each $P_i$ is non-trivial and $A$ is the direct sum of the trivial components of $P$. If $l=0$, then $P$ is an antichain and $Sib(P) = 1$. Assume that $l>0$ and that $P' \subseteq P$ induces a sibling
of $P$ via an embedding $f$. We prove that $f$ induces a bijection on the set of indices of components of $P$. First note that since the components of $P$ are connected, for each $i$, there is $j$ such that $f(P_i)\subseteq P_j$. For each $i$, define $\hat{f}(i)=j$ where $j$ is such that $f(P_i)\subseteq P_j$. Suppose that for $i\neq j$, $\hat{f}(i)=\hat{f}(j)=k$. It follows that $f$ embeds $P_i\oplus P_j$ in $P_k$. We first show that $k$ cannot be $i$ or $j$. Suppose, without loss of generality, that $k=i$. Then $P_i\oplus 1\hookrightarrow P_i\oplus P_j\hookrightarrow P_i$ and by Lemma  \ref{Connecteddisconnected}, $P_i$ has infinitely many siblings, a contradiction. So, $k\neq i, j$. It follows that $\hat{f}(k)\neq k$ because otherwise $f$ embeds $P_i\oplus P_k$ in $P_k$ which implies that $P_k$ has infinitely many siblings, a contradiction. Further, $\hat{f}(k)\neq i, j$ because otherwise $P_i\oplus P_j\approx P_k$. By a similar argument, $\hat{f}^2(k)\neq i, j, k, \hat{f}(k)$. Iterating, the $P_{\hat{f}^n(k)}$ provide infinitely many non-trivial components  of $P$, a contradiction. Thus, $\hat{f}$ is injective on the set of indices of non-trivial components of $P$ and since there are finitely many such indices, $\hat{f}$ is bijective. Extending $\hat{f}$ to the set $I$ of indices of components of $P$, it follows that $\hat{f}$ is bijective on $I$. Pick $i<l$ and consider the orbit $\hat{f}.i$ of $i$ under $\hat{f}$. We have $P_i\approx P_j$ where $j\in \hat{f}.i$ and by assumption, $P_i\cong P_{\hat{f}.i}$ for each $i<l$. Hence $P'$ is isomorphic to $P$. 
\end{proof}

Now we are ready to conclude the following. 

% Proposition 6.14
\begin{proposition} \label{Directsumsiblings}
Let $P$ be a countable direct sum of at least two non-empty connected NE-free posets. If each component of P has one or infinitely many siblings, then $Sib(P)=1$ or $\infty$. 
\end{proposition}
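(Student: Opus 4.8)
The plan is to assemble the four preceding results into one clean case analysis, the point being that their hypotheses exhaust all possibilities. First I would invoke Proposition \ref{InfinitesiblingnumberofP}: if $P$ is a sibling of some connected $NE$-free poset, or some component of $P$ has infinitely many siblings, or $P$ has infinitely many non-trivial components, then $Sib(P)=\infty$, and the conclusion holds in each of these cases. This disposes of everything except the complementary situation in which none of the three conditions holds.

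So suppose $P$ is not equimorphic to any connected $NE$-free poset, no component of $P$ has infinitely many siblings, and $P$ has only finitely many non-trivial components. Here the key (but routine) observation is that "no component has infinitely many siblings" combined with the standing hypothesis that each component has one or infinitely many siblings forces every component of $P$ to have exactly one sibling. Together with "$P$ has only finitely many non-trivial components" (the case of zero non-trivial components being the trivial antichain case), this is precisely the hypothesis of Lemma \ref{Finitenontrivial}, which gives $Sib(P)=1$. Hence in all cases $Sib(P)\in\{1,\infty\}$, as claimed.

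I do not expect a substantial obstacle: the real content was carried by the embedding constructions in the earlier lemmas — adding finite antichains to a connected sibling in Lemma \ref{Connecteddisconnected}, absorbing the countably many trivial components into odd-indexed blocks of a duplicated increasing sequence in Lemma \ref{increasing}, and swapping in the distinct siblings $Q_n$ of a component in Lemma \ref{Infinitecomponent} — as well as by the rigidity analysis of $\hat f$ on the index set in Lemma \ref{Finitenontrivial}. The only thing that needs genuine care in this proof is the bookkeeping: checking that the three clauses of Proposition \ref{InfinitesiblingnumberofP} and their joint negation really partition the cases, and that the negation lines up exactly with the (slightly stronger, "exactly one sibling") hypothesis of Lemma \ref{Finitenontrivial}. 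The conceptually interesting point to flag is that the existence of a connected sibling is the single phenomenon that destroys rigidity of a disconnected direct sum; once it is ruled out, the finitely-many-non-trivial-components case behaves essentially like a finite poset.
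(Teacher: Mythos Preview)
Your proposal is correct and follows essentially the same case analysis as the paper: apply Proposition \ref{InfinitesiblingnumberofP} to obtain $Sib(P)=\infty$ in the ``bad'' cases, and in the remaining case use the standing hypothesis to upgrade ``not infinitely many siblings'' to ``exactly one sibling'' for every component, then invoke Lemma \ref{Finitenontrivial}. The only minor difference is that you also carry along the ``$P$ has a connected sibling'' clause of Proposition \ref{InfinitesiblingnumberofP}, which the paper omits here since the other two clauses already suffice to exhaust the cases; this is harmless but unnecessary.
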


\begin{proof}
If $P$ has some component with infinitely many siblings or $P$ has infinitely many non-trivial components, then $Sib(P)=\infty$ by Proposition \ref{InfinitesiblingnumberofP}. If $P$ has only finitely many non-trivial components each of which has only one sibling, then $Sib(P)=1$ by Lemma \ref{Finitenontrivial}. This completes the proof.
\end{proof}

%%%%%%%%%%%
% Section 6.3
\subsection{Linear Sum} \label{Linearsum}

By Theorem \ref{Pstructure}, we know that an $N$-free poset with more than one element which is not a direct sum or the sum of a labelled chain, is a linear sum $P$ of at least two non-empty $N$-free posets with property CCGC. In this section we prove that such a countable linear sum $P$ of $N$-free posets has one or infinitely many siblings on condition that this property holds for each summand of $P$. In this section, by a linear sum, we always mean a linear sum of $N$-free posets consisting of summands with property CCGC. 

% Lemma 6.15
\begin{lemma} \label{Uniquesummand}
Let $P=+_{i\in I} P_i$ and $Q=+_{j\in J} Q_j$ be two linear sums. Every embedding f from P to Q induces an order preserving map $\hat{f}:I\to J$.  Moreover, If f maps distinct summands of P into distinct summands of Q, resp f is an isomorphism, then $\hat{f}$ is a chain embedding, resp a chain isomorphism.  
\end{lemma}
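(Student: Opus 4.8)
The plan is to produce the map $\hat f$ by sending $i\in I$ to the index $j\in J$ of a summand $Q_j$ that meets $f(P_i)$, and then to show this is well-defined and order-preserving using the property CCGC of the summands. First I would fix $i\in I$ and consider $f(P_i)\subseteq Q$. I claim $f(P_i)$ lies entirely inside a single summand $Q_j$. Indeed, since each $P_i$ has property CCGC, the complement of its comparability graph is connected; an embedding $f$ preserves incomparability (it is an order-embedding of posets, hence also of the complements of the comparability graphs), so $f(P_i)$ is a set whose complement-comparability graph is connected. But if $f(P_i)$ met two distinct summands $Q_{j_1}<Q_{j_2}$ of $Q$, then every element of $f(P_i)\cap Q_{j_1}$ would be below every element of $f(P_i)\cap Q_{j_2}$, and more generally the components of $f(P_i)$ under ``lying in the same summand'' would be totally ordered as blocks, so two elements in different blocks are comparable — this would disconnect the complement of the comparability graph of $f(P_i)$ (it would split as a complete sum across the blocks), contradicting CCGC unless all of $f(P_i)$ sits in one summand $Q_j$. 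Hence $\hat f(i):=j$ is well-defined.

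Next I would check that $\hat f$ is order preserving. Take $i<i'$ in $I$, so $P_i<P_{i'}$ in $P$, i.e. every element of $P_i$ is strictly below every element of $P_{i'}$. Applying $f$, every element of $f(P_i)\subseteq Q_{\hat f(i)}$ is below every element of $f(P_{i'})\subseteq Q_{\hat f(i')}$. If $\hat f(i)=\hat f(i')=j$, this would force a ``cross'' comparability pattern entirely inside the single summand $Q_j$ between the two nonempty sets $f(P_i)$ and $f(P_{i'})$; I would argue this is incompatible with $Q_j$ having property CCGC, since it would again split (a subset of) $Q_j$ as a complete sum of two nonempty pieces, disconnecting the complement of its comparability graph — more precisely, I would show $\hat f(i)\ne\hat f(i')$ and that in fact $\hat f(i)<_J\hat f(i')$ because $Q_{\hat f(i)}<Q_{\hat f(i')}$ must hold in $Q$ (two distinct summands meeting the comparable sets $f(P_i)$ and $f(P_{i'})$ in the right order are themselves ordered that way in the context chain). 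This gives $\hat f(i)<\hat f(i')$, so $\hat f$ is order preserving.

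For the ``moreover'' clause: if $f$ maps distinct summands of $P$ into distinct summands of $Q$, then by construction $\hat f$ is injective, and an injective order-preserving map between chains is a chain embedding. If $f$ is an isomorphism, apply the same construction to $f^{-1}$ to get $\widehat{f^{-1}}:J\to I$; a short check that $\widehat{f^{-1}}\circ\hat f=\mathrm{id}_I$ and $\hat f\circ\widehat{f^{-1}}=\mathrm{id}_J$ (both composites are order-preserving maps fixing each index, since $f^{-1}\circ f$ and $f\circ f^{-1}$ are identities and hence cannot move an element out of its summand) shows $\hat f$ is a chain isomorphism.

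The main obstacle I anticipate is the first step — proving $f(P_i)$ is confined to one summand of $Q$ — and more generally isolating exactly the right consequence of CCGC: one has to argue carefully that an order-embedding $P\to Q$ induces an embedding of the complement of $CG(P)$ into the complement of $CG(Q)$, that a set meeting several summands of a linear sum has a disconnected complement-comparability graph (a complete sum of at least two parts), and that connectivity of $(CG(P_i))^c$ is therefore preserved only if the image stays in one summand. The order-preservation of $\hat f$ then follows from the same circle of ideas, but the bookkeeping about which summand of $Q$ contains which image, and ruling out the degenerate ``everything collapses into one summand'' case, is where the real work lies.
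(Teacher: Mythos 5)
Your first step --- showing $f(P_i)$ is confined to a single summand $Q_j$ via connectivity of $(CG(P_i))^c$ and the fact that elements in distinct summands of $Q$ are comparable --- is exactly the paper's argument, and your handling of the two ``moreover'' clauses also matches the paper. However, your order-preservation step contains a genuine error: you claim that $i<i'$ forces $\hat f(i)\neq\hat f(i')$ because having $f(P_i)<f(P_{i'})$ inside a single summand $Q_j$ ``would split (a subset of) $Q_j$ as a complete sum of two nonempty pieces, disconnecting the complement of its comparability graph,'' contradicting CCGC. This misapplies CCGC: the property concerns the complement of the comparability graph of the \emph{whole} summand $Q_j$, and a proper subset of $Q_j$ decomposing as a linear sum of two comparable blocks is perfectly compatible with $(CG(Q_j))^c$ being connected. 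Concretely, let $Q_j$ be the poset on $\{a,b,c\}$ with $a<b$ and $c$ incomparable to both (so $(CG(Q_j))^c$ has edges $ac$, $bc$ and is connected), and let $P=P_1+P_2$ be a two-element chain of singletons; the embedding sending $P_1\mapsto a$, $P_2\mapsto b$ gives $\hat f(1)=\hat f(2)$. So $\hat f$ need not be injective --- which is precisely why the lemma adds the ``maps distinct summands into distinct summands'' hypothesis before concluding that $\hat f$ is a chain embedding.

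The good news is that the conclusion you actually need is weaker and follows without this false claim: the lemma asserts only that $\hat f$ is order preserving in the weak sense. If $\hat f(i)=\hat f(i')$ there is nothing to prove, and if $\hat f(i)\neq\hat f(i')$ then, since $f(P_i)\leq_Q f(P_{i'})$ and these sets lie in distinct summands of the linear sum $Q$, one gets $Q_{\hat f(i)}\leq_Q Q_{\hat f(i')}$ and hence $\hat f(i)\leq_J\hat f(i')$ --- which is the second half of the argument you already sketched. Deleting the injectivity claim and splitting into these two cases repairs the proof and brings it in line with the paper's.
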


\begin{proof}
We show that for every $i\in I$ there is $j\in J$ such that $f(P_i)\subseteq Q_j$. Let $i\in I$ be given. Pick $x, y\in P_i$. Set $G_i=CG(P_i)$. Since $G_i^c$ is connected, $x, y$ are connected in $G_i^c$ by a path $x=x_0, x_1, \ldots, x_{n-1}=y$. This means that $x_m\perp x_{m+1}$ in $P_i$ for every $0\leq m\leq n-2$. We have $f(x_m)\perp f(x_{m+1})$ for every $0\leq m\leq n-2$. Now suppose that $f(x)\in Q_j$ for some $j\in J$. Since the summands of $Q$ are comparable to each other, $f(x_m)\in Q_j$ for every $0\leq m\leq n-1$ because otherwise for two $m, m+1$, $f(x_m)$ and $f(x_{m+1})$ are comparable. This implies that $f(P_i)\subseteq Q_j$. Now define $\hat{f}:I\to J$ by $\hat{f}(i)=j$ where $j\in J$ is such that $f(P_i)\subseteq Q_j$. The mapping $\hat{f}$ is well-defined by above argument. If $i \leq_I i'$, then $P_i \leq_P P_{i'}$ which implies that $f(P_i)\leq_Q f(P_{i'})$ because $f$ is order-preserving. Consequently, $Q_j \leq_Q Q_{j'}$ where $f(P_i)\subseteq Q_j$ and $f(P_{i'})\subseteq Q_{j'}$. Hence, $\hat{f}(i) \leq_J \hat{f}(i')$. Moreover, if $f$ does not embed two distinct summands of $P$ into the same summand of $Q$, then $\hat{f}$ is injective meaning that it is a chain embedding. In particular, if $f$ is an isomorphism, then so is $f^{-1}$ and for every $i$ there is $j$ such that $f(P_i)\subseteq Q_j$ and $f^{-1}(Q_j)\subseteq P_i$ which imply that $f(P_i)=Q_j$. Thus, $\hat{f}$ defined as above is a chain isomorphism. 
\end{proof}

% Lemma 6.16
\begin{lemma} \label{InfiniteCproperty}
Let P be a linear sum. Suppose that some summand of P has infinitely many pairwise non-isomorphic siblings with property CCGC. Then P has infinitely many siblings.
\end{lemma}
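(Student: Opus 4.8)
The plan is to mimic the strategy used for direct sums in Lemma~\ref{Infinitecomponent} and, more precisely, the construction in Lemma~\ref{Connecteddisconnected}: produce one equimorphic copy $P'$ of $P$ that contains arbitrarily many ``slots'' into which non-isomorphic versions of a distinguished summand can be substituted, and then argue that substituting distinct copies yields pairwise non-isomorphic siblings. Write $P=+_{i\in I}P_i$ and let $Q:=P_{i_0}$ be a summand with infinitely many pairwise non-isomorphic siblings $Q_1,Q_2,Q_3,\dots$, each having property CCGC. The first step is to show that one may assume $P$ contains infinitely many summands equimorphic to $Q$; this is the place where I would use the w.q.o.\ property of $\mathcal{N}_{\leq\omega}$ together with the fact that each $Q_n$ embeds in $Q$ and $Q$ embeds in each $Q_n$. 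Concretely, since each $Q_n\approx Q$ we have $Q+Q\hookrightarrow Q$ (embed the two copies of $Q$ into $Q_1$ via $Q_1\hookrightarrow Q$ applied twice, using $Q\approx Q_1$ and that $Q_1$ itself need not split — here I would instead argue via: $Q$ has a proper equimorphic sibling or not, and in either case one gets $Q\approx Q+Q$ or can replace $P$ by an equimorphic sum with infinitely many $Q$-summands by inserting copies). Let me phrase the reduction cleanly: I claim $P\approx P':=(+_{n<\omega}Q)+P$ where the prepended $\omega$-indexed block of copies of $Q$ sits below everything; this holds because $+_{n<\omega}Q$ embeds into a single summand of $P$ equimorphic to (or embedding) $Q$ only if such a summand absorbs an $\omega$-chain of copies, which is \emph{not} automatic, so the honest reduction is: if no summand of $P$ is equimorphic to $Q$ then replace $P$ by $P+Q$ (still equimorphic since $Q\hookrightarrow P$, using that some summand contains a copy of $Q$), and then iterate/amalgamate to get infinitely many $Q$-summands using w.q.o.\ exactly as in Lemma~\ref{increasing}.

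Granting the reduction, assume $P$ has summands $P_{i_n}\approx Q$ for $n<\omega$, say with $i_0<i_1<i_2<\cdots$ cofinal or at least infinite in the relevant sense, plus a remainder. For each $m<\omega$ define $P^{(m)}$ to be the linear sum obtained from $P$ by replacing $P_{i_0},\dots,P_{i_{m-1}}$ by $Q_1,\dots,Q_m$ respectively (and leaving the remaining $P_{i_n}$, $n\geq m$, equal to $Q$). Each $P^{(m)}$ is $NE$-free by Corollary~\ref{SubNEposet}, has property-CCGC summands (we chose the $Q_n$ that way, and $Q$ has it), and is equimorphic to $P$: one direction embeds $Q_n\hookrightarrow Q$ summand-by-summand, the other embeds $Q\hookrightarrow Q_n$; since all these substitutions happen inside summands and the context chain $I$ is unchanged, the induced map on $I$ is the identity and is a chain embedding, so by (the easy direction of) Lemma~\ref{Uniquesummand} / the definition of linear sum these are genuine poset embeddings. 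Hence each $P^{(m)}$ is a sibling of $P$.

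The third step is to show the $P^{(m)}$ are pairwise non-isomorphic, and more strongly that each one has only finitely many ``copies of a $Q_n$''. Suppose $\phi:P^{(m)}\to P^{(m')}$ is an isomorphism with $m\neq m'$, say $m<m'$. By Lemma~\ref{Uniquesummand}, $\phi$ induces a chain isomorphism $\hat\phi:I\to I$ and $\phi$ maps each summand isomorphically onto a summand: $\phi(\text{summand at }i)=\text{summand at }\hat\phi(i)$. Now $P^{(m')}$ has at least $m+1$ summands isomorphic to one of $Q_1,\dots,Q_{m'}$ that are pairwise \emph{non}-isomorphic among themselves, whereas among the summands of $P^{(m)}$ only $Q_1,\dots,Q_m$ are pairwise non-isomorphic and all remaining relevant summands are isomorphic to $Q$; since an isomorphism carries summands to summands preserving isomorphism type and $\hat\phi$ is a bijection $I\to I$, the multiset of isomorphism types of summands of $P^{(m)}$ equals that of $P^{(m')}$ — but $P^{(m')}$ contains the type of $Q_{m+1},\dots,Q_{m'}$ which, being non-isomorphic to $Q$ and to each $Q_j$ with $j\leq m$, do not occur among the summands of $P^{(m)}$ at all. (Here one must be slightly careful that $Q_j\not\cong Q$: indeed $Q_j\approx Q$ but $Q_j\not\cong Q_k$ for $j\neq k$, and at most one $Q_k$ can be $\cong Q$, so infinitely many are not; discard that exceptional index from the start.) This contradiction shows $P^{(m)}\not\cong P^{(m')}$, so $\{P^{(m)} : m<\omega\}$ are infinitely many pairwise non-isomorphic siblings of $P$, i.e.\ $Sib(P)=\infty$.

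\textbf{Main obstacle.} I expect the delicate point to be the first step — arranging that $P$ has (up to equimorphy) infinitely many summands equimorphic to $Q$, and in particular justifying $Q\approx Q+Q$ or the analogous absorption, cleanly within the linear-sum framework; a single summand of a linear sum need not absorb an infinite chain of copies of itself, so one has to run the w.q.o.\ amalgamation argument of Lemma~\ref{increasing} rather than a one-line absorption. The bookkeeping that an isomorphism between linear sums is, by Lemma~\ref{Uniquesummand}, forced to respect summands up to isomorphism type (so that a ``counting isomorphism-types of summands'' invariant is well defined) is the other thing to get exactly right, but it is routine given Lemma~\ref{Uniquesummand}.
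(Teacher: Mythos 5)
Your proposal has a genuine gap exactly where you flag it: the reduction to ``$P$ has infinitely many summands equimorphic to $Q$'' is not justified, and none of the devices you suggest for it work in the linear-sum setting. The absorption $Q\approx Q+Q$ is false in general (take $Q=\omega^*$, or any summand that does not embed two stacked copies of itself); $P\approx P+Q$ likewise requires the order-preserving map of Lemma~\ref{Uniquesummand} to fit an extra summand into the context chain $I$, which nothing guarantees; and the w.q.o.\ amalgamation of Lemma~\ref{increasing} relies on the context being an antichain, where any countable family of extra components can be injected into an infinite subfamily $\bigoplus_n P_{2n}$ --- over a chain context the analogous rearrangement changes the order type of $I$ and there is no reason for the result to embed back into $P$. (Indeed, the paper handles ``infinitely many non-trivial summands'' of a linear sum only much later, in Proposition~\ref{Infinitesummand}, by an entirely different and much heavier construction.) Since your whole family $P^{(m)}$ is built on top of this reduction, the proof as written does not go through. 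There is also a secondary bookkeeping issue in your non-isomorphism step: the indices $i_n$ must exhaust \emph{all} summands equimorphic to $Q$ (and that set need not have order type $\omega$), or else the ``remainder'' could already contain a summand isomorphic to some $Q_{m+1}$ and the multiset-of-types invariant fails.

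The paper's proof avoids the reduction entirely by a simpler substitution: let $J=\{i\in I: P_i\approx P_j\}$ (possibly finite, even a singleton), and for each $n$ form $P^n$ by replacing \emph{every} summand indexed by $J$ with the \emph{same} sibling $P_{jn}$, leaving all other summands alone. Each $P^n$ is trivially equimorphic to $P$ since the context chain is unchanged and summands are swapped for equimorphic copies. If $P^m\cong P^n$, Lemma~\ref{Uniquesummand} gives a chain isomorphism $\hat f$ of $I$ with $Q^m_i\cong Q^n_{\hat f(i)}$; for $i\in J$ the target summand is equimorphic to $P_j$, hence its index lies in $J$ and it equals $P_{jn}$, forcing $P_{jm}\cong P_{jn}$ and so $m=n$. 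So the infinitude is indexed by \emph{which} sibling you substitute uniformly, not by \emph{how many} distinct siblings you pack into one copy --- this is the one-line idea your attempt is missing, and it is the exact analogue of Lemma~\ref{Infinitecomponent} for components of a direct sum.
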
 

\begin{proof}
Set $P=+_{i\in I} P_i$.  Let $P_j$ be a summand  of $P$ such that  $\{P_{jn}\}_{n<\omega}$ is an infinite family of pairwise non-isomorphic siblings of $P_j$ with property CCGC. Let $J=\{i\in I : P_i\approx P_j\}$. For each $n<\omega$, set $P^n:=+_{i\in I} Q_i^n$ where $Q_i^n=P_i$ if $i\notin J$ and $Q_i^n=P_{jn}$ if $i\in J$. Notice that for each $i\in I$, $Q_i^n$ has property CCGC. Taking each summand $P_i$ to the summand $Q_i^n$ and vice versa, it is clear that $P^n\approx P$ for every $n<\omega$. Now suppose that $P^m\cong P^n$ for $m\neq n$ and let $f:P^m\to P^n$ be an isomorphism. Since $f$ is an isomorphism and both $P^m, P^n$ are linear sums whose summands have property CCGC, by Lemma \ref{Uniquesummand}, $f$ induces a chain isomorphism $\hat{f}:I\to I$ and for each $i\in I$, $f(Q_i^m)= Q^n_{\hat{f}(i)}$. More, we have $Q_i^m\cong Q_{\hat{f}(i)}^n$. Let $i\in J$. Then $P_{jm}= Q_i^m \cong Q_{\hat{f}(i)}^n = P_{jn}$, a contradiction. The last equality is due to the fact that $Q^n_{\hat{f}(i)}\approx P_{jm}\approx P_j$. This implies that the $P^n$ are pairwise non-isomorphic. Hence, $Sib(P)=\infty$. 
\end{proof}

The following proposition provides sufficient conditions to obtain infinitely many siblings for a countable linear sum. 

% Proposition 6.17
\begin{proposition} \label{InfiniteDproperty}
Let P be a countable linear sum. If some summand $P_j$ of P is the sum of a labelled chain with no least element; or $P_j$ is disconnected and has infinitely many siblings with property CCGC; or $P_j$ is disconnected and has at least one connected sibling, then P has infinitely many siblings. 
\end{proposition}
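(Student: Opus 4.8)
The plan is to establish each of the three sufficient conditions separately, producing in each case an explicit infinite family of pairwise non-isomorphic siblings of $P$. Throughout, write $P=+_{i\in I}P_i$ with each $P_i$ an $NE$-free poset having property CCGC, and let $j$ denote the index of the distinguished summand $P_j$. The common engine is Lemma~\ref{Uniquesummand}: any isomorphism between two linear sums of CCGC-summands induces a chain isomorphism of the index chains matching summands to isomorphic summands. So in each case it suffices to perturb the $j$-th summand (and the summands equimorphic to it) in infinitely many inequivalent ways, while keeping the perturbed objects CCGC so that the hypotheses of Lemma~\ref{Uniquesummand} remain available.

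First I would handle the case where $P_j$ is the sum of a labelled chain with no least element. By Theorem~\ref{Reduced}, $P_j$ itself has $2^{\aleph_0}$ siblings; moreover the siblings $\sum C_f$ constructed in Lemma~\ref{Nonisoplc} are again sums of labelled chains with no least element, hence by Proposition~\ref{Plsccgc} they are connected and have property CCGC. Therefore $P_j$ has infinitely many pairwise non-isomorphic siblings with property CCGC, and we are reduced to the second case. For the second case, $P_j$ disconnected with infinitely many siblings having property CCGC, this is precisely the hypothesis of Lemma~\ref{InfiniteCproperty}, which already yields $Sib(P)=\infty$. So the only genuinely new case is the third: $P_j$ disconnected but with at least one connected sibling $P_j'$. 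Here I would argue as in Lemma~\ref{Connecteddisconnected}: since $P_j'\approx P_j$ and $P_j$ is disconnected, $P_j'$ embeds into a component of $P_j$, so $P_j'\oplus 1\hookrightarrow P_j\hookrightarrow P_j'$, whence $P_j'\approx P_j'\oplus \bar K_n$ for every $n$, where $\bar K_n$ is an antichain of size $n$. Now $P_j'\oplus\bar K_n$ is disconnected with exactly $n$ trivial components more than $P_j'$ has — wait, more carefully: the posets $P_j'\oplus\bar K_n$ are pairwise non-isomorphic siblings of $P_j$ distinguished by their number of trivial components, and each is disconnected. One then checks that $P_j'\oplus\bar K_n$ has property CCGC: the complement of its comparability graph contains the complete graph on the $n$ new isolated vertices together with all of $P_j'$, and since every vertex of $P_j'$ is incomparable to each new vertex, $(CG(P_j'\oplus\bar K_n))^c$ is connected. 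Thus $P_j$ has infinitely many pairwise non-isomorphic siblings with property CCGC, reducing again to the second case.

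The main obstacle I anticipate is verifying the CCGC condition is preserved under the perturbations — in the third case this is the computation just sketched (routine but must be done), and in the first case it rests on Proposition~\ref{Plsccgc} applied to the specific siblings $\sum C_f$, which requires noting that those siblings still have decomposition trees with no least element (immediate from the construction in Lemma~\ref{Nonisoplc}, which only inserts elements coinitially into a chain already lacking a first element). Once CCGC is in hand, Lemma~\ref{InfiniteCproperty} does all the remaining bookkeeping, so the three cases collapse to a single application of that lemma, and the proof concludes that in every listed situation $Sib(P)=\infty$.
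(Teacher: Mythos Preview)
Your proposal is correct and follows essentially the same route as the paper's own proof: reduce all three cases to Lemma~\ref{InfiniteCproperty} by producing, in each case, infinitely many pairwise non-isomorphic CCGC siblings of $P_j$. The paper handles the first case by invoking Proposition~\ref{Plsccgc} and Theorem~\ref{Reduced} together (the constructed siblings $\sum C_f$ are again sums over labelled chains with no least element, hence CCGC), the second case is literally Lemma~\ref{InfiniteCproperty}, and for the third case the paper forms $\bar K_n\oplus Q$ for the connected sibling $Q$, noting these are pairwise non-isomorphic CCGC siblings of $P_j$---exactly your $P_j'\oplus\bar K_n$ argument.
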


\begin{proof}
By Proposition \ref{Plsccgc} the sum of a countable labelled chain with no least element has property CCGC and by Theorem \ref{Reduced} it has continuum many siblings with the same properties. If $P$ contains a summand $P_j$ which is the sum of a labelled chain with no least element or $P_j$ is disconnected having  infinitely many siblings with property CCGC, then the statement is true by Lemma \ref{InfiniteCproperty}.

Assume that there is some  
disconnected summand $P_j$ of $P$ which is equimorphic to some connected $N$-free poset $Q$. Then $Q\approx P_j'$ for some connected component $P_j'$ of $P_j$. Consequently, the $\Bar{K}_n\oplus Q$, $n<\omega$, where $\Bar{K}_n$ is an antichain of size $n$, are pairwise non-isomorphic siblings of $Q$ and thus of $P_j$ which have property CCGC. Hence, in this case $P$ has infinitely many siblings by Lemma \ref{InfiniteCproperty}. 
\end{proof}

Let $P$ be a countable linear sum with more than one element such that all its summands have only one sibling. Then $P$ does not contain a summand which is the sum of a labelled chain with no least element. That is, each summand of $P$ is either a singleton or a disconnected $N$-free poset. Obviously, $P$ does not embed in a singleton and if $P$ embeds in a disconnected summand $P_j$, then $P_j$ has infinitely many siblings by Lemma \ref{Connecteddisconnected} which contradicts our assumption. In other words, each summand of $P$ strictly embeds into $P$. In case a linear sum has only finitely many non-trivial summands having only one sibling, we show that $P$ has one or infinitely many siblings. However, in light of the following result, we prove a stronger statement.

% Theorem 6.18
\begin{theorem} [\cite{LPW},  Corollary 3.6] \label{Dichchain}
If C is a countable chain, then $Sib(C)=1$, $\aleph_0$ or $2^{\aleph_0}$.  
\end{theorem}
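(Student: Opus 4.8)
The plan is to reduce the problem to the structure theory of countable chains — Hausdorff's hierarchy together with Laver's theorem that countable chains are better-quasi-ordered under embeddability — treating the non-scattered and scattered cases separately and, in the scattered case, peeling off an essentially unique decomposition into additively indecomposable summands so as to argue by induction on Hausdorff rank. First observe that a sibling of a countable chain $C$ is again a countable chain, since it embeds into $C$. If $C$ is not scattered then $\mathbb{Q}$ embeds into $C$; since $C$ is countable it embeds into $\mathbb{Q}$ by universality, so $C\approx\mathbb{Q}$. Hence the siblings of $C$ are exactly the countable non-scattered chains, and there are $2^{\aleph_0}$ of these — for instance the chains $\alpha+\mathbb{Q}$ for $\alpha<\omega_1$ are pairwise non-isomorphic and all equimorphic to $\mathbb{Q}$ — so $Sib(C)=2^{\aleph_0}$. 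From now on $C$ is scattered.

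For the scattered case I would use two structural facts. First, by Laver's theorem the class of countable scattered chains is b.q.o.\ under embeddability, so induction on Hausdorff rank is legitimate. Second, a countable scattered chain is equimorphic to a finite \emph{reduced} sum $C\approx D_1+\cdots+D_k$ of additively indecomposable chains (no $D_i$ absorbing a neighbour), and this reduced decomposition is canonical, being preserved factor by factor under isomorphism. Granting this, every sibling $C'$ of $C$ has a reduced decomposition $D_1'+\cdots+D_k'$ of the same length with $D_i'\approx D_i$, and conversely any such recombination is a sibling of $C$; therefore $\max_i Sib(D_i)\le Sib(C)\le\prod_i Sib(D_i)$. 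Since a finite product of cardinals each in $\{1,\aleph_0,2^{\aleph_0}\}$ again lies in $\{1,\aleph_0,2^{\aleph_0}\}$, and these two bounds pin $Sib(C)$ to that same value, it remains to prove the theorem for an additively indecomposable scattered chain $D$.

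I would prove this last step by induction on the Hausdorff rank of $D$. After discarding a least and/or greatest element of $D$ if present — which, by the same bounds, affects $Sib$ only within $\{1,\aleph_0,2^{\aleph_0}\}$ — I may assume $D$ has no least element (the no-greatest-element case being symmetric). Then, just as in the labelled-chain case treated earlier, some initial segment of $D$ is left-indecomposable by Lemma \ref{Labelledchainsproperties}(1), and by Lemma \ref{Labelledchainsproperties}(2) one writes $D\approx\sum^{*}_{n<\omega}D_n$ with the $D_n$ indecomposable of strictly smaller Hausdorff rank — so the induction hypothesis applies to them — and the sequence quasi-monotonic. Now there are two regimes. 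If, up to equimorphy, $D$ admits a coinitial family of mutually independent ``absorbing slots'' — a slot being a factor $D_n$ replaceable by any of its siblings, or a gap absorbing finitely many points, without changing the equimorphy class of $D$ — and at least one such factor has two non-isomorphic siblings, then I would run the construction of Lemma \ref{Nonisoplc}, using Lemma \ref{ncopies} to verify equimorphy and Lemma \ref{Continuummaps} to ensure pairwise non-isomorphism, obtaining $2^{\aleph_0}$ siblings exactly as in Theorem \ref{Reduced}. Otherwise every such factor is rigid and only finitely or countably many modifications are possible, so a direct count gives $Sib(D)=1$ or $\aleph_0$. Finally, a finite value $k\ge 2$ is ruled out because two non-isomorphic siblings force an absorbing gap, and inserting $1,2,3,\dots$ points there (or iterating a single replacement) produces $\aleph_0$ pairwise non-isomorphic siblings, exactly the mechanism used in Lemmas \ref{Connecteddisconnected}, \ref{Infinitecomponent} and \ref{increasing}. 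This closes the induction and the proof.

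I expect the main obstacle to be the combinatorics of the last paragraph: pinning down precisely when a coinitial (or cofinal) family of independent modifications exists, so as to separate the indecomposable chains with countably many siblings from those with continuum many, and carrying the Hausdorff-rank induction through the passage $D\approx\sum^{*}_{n<\omega}D_n$ while keeping track of how siblings of the individual $D_n$ recombine into siblings of $D$. A subsidiary point is establishing, or extracting from Laver's work, the canonicity of the reduced indecomposable decomposition that underlies the reduction of the general scattered case to the indecomposable one.
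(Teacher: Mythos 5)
First, a point of order: the paper does not prove this statement at all --- Theorem \ref{Dichchain} is imported verbatim as Corollary 3.6 of \cite{LPW} --- so there is no in-paper argument to compare yours against. Your outline does follow the same broad architecture as the cited source (dispose of the non-scattered case by reducing to $\mathbb{Q}$; in the scattered case invoke Laver's bqo theorem, decompose into additively indecomposable summands, and induct on Hausdorff rank), so the only question is whether your sketch actually closes the argument. It does not.

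Two gaps are genuine. The smaller one: the chains $\alpha+\mathbb{Q}$, $\alpha<\omega_1$, witness only $\aleph_1$ pairwise non-isomorphic siblings of $\mathbb{Q}$, which equals $2^{\aleph_0}$ only under CH; you need a continuum-sized family outright, e.g.\ for each infinite $S\subseteq\{2,3,\dots\}$ the chain $\mathbb{Q}+\sum_{n\in S}(\underline{n}+\mathbb{Q})$, whose maximal finite discrete blocks recover $S$. The decisive one: essentially all of the content of the theorem sits in your final paragraph, and there you assert rather than prove. The claim that, absent a coinitial family of independent ``absorbing slots,'' a ``direct count'' gives $Sib(D)=1$ or $\aleph_0$, and the claim that two non-isomorphic siblings ``force an absorbing gap'' into which one can insert $1,2,3,\dots$ points (so that $2\le k<\aleph_0$ is impossible), are exactly the statements whose proofs constitute the technical core of \cite{LPW}; they do not follow from anything you have set up, and the machinery of the present paper (Lemmas \ref{Nonisoplc} and \ref{ConstructionLS}) only ever delivers the $2^{\aleph_0}$ half of such a dichotomy, never the separation of $1$, finite $\ge 2$, and $\aleph_0$. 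Likewise the ``canonicity'' of the reduced decomposition into indecomposables, on which both of your bounds $\max_i Sib(D_i)\le Sib(C)\le\prod_i Sib(D_i)$ rest, is not free from Laver's theorem: one must show that equimorphic scattered chains have reduced decompositions of the same length with pairwise equimorphic factors in order, and you flag this yourself as unestablished. As written, the proposal is a reasonable roadmap that matches the shape of the known proof but leaves its hardest steps as declarations.
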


% Lemma 6.19
\begin{lemma} \label{Finitesummand}
Let $P=+_{i\in I} P_i$ be a countable linear sum with only finitely many non-trivial summands each of which has only one sibling. Then $Sib(P)=1$ or $\aleph_0$ or $2^{\aleph_0}$. 
\end{lemma}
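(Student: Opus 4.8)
The plan is to reduce the analysis of $P=+_{i\in I} P_i$ to the analysis of a countable chain and then invoke Theorem~\ref{Dichchain}. First I would use the hypothesis that there are only finitely many non-trivial summands, each with a unique sibling, together with the observations recorded just before the statement: no summand of $P$ is the sum of a labelled chain with no least element, each non-trivial summand is disconnected, and each summand strictly embeds into $P$ (if $P$ embedded in a disconnected summand $P_j$, then $P_j$ would have infinitely many siblings by Lemma~\ref{Connecteddisconnected}, contradicting our assumption). The key structural point is that the finitely many non-trivial summands partition $I$ into finitely many ``blocks'' of consecutive indices, and the index set restricted to the trivial summands is a countable chain; I would think of $P$ as a chain $C$ (the linear sum of the trivial singleton summands) with finitely many ``marked points'' at which the fixed disconnected posets $P_{i_1},\dots,P_{i_k}$ are inserted, in a fixed order.

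The heart of the argument is to show that every sibling $P'$ of $P$ has exactly the same multiset of non-trivial summands, occurring in the same order, so that $P'$ is obtained from $P$ by replacing only the chain part $C$ by an equimorphic countable chain $C'$. For this I would take an embedding $f:P\to P'$ witnessing equimorphy. Since each non-trivial summand $P_i$ has property CCGC and is disconnected, Lemma~\ref{Uniquesummand} gives an order-preserving $\hat f$ taking summands to summands; and since a disconnected poset cannot embed into a singleton, each non-trivial summand of $P$ must land inside a non-trivial summand of $P'$, and conversely. A counting/iteration argument in the spirit of Lemma~\ref{Finitenontrivial} --- using that there are only finitely many non-trivial summands, and that if two of them embedded into one we would force either $P_{i}\oplus 1\hookrightarrow P_i$ (hence infinitely many siblings, contradiction) or an infinite strictly increasing chain of distinct non-trivial summands --- shows $f$ induces a bijection between the non-trivial summands of $P$ and those of $P'$, order-preservingly, hence by the usual orbit argument an isomorphism on each. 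Thus the non-trivial part of $P'$ is isomorphic to that of $P$, in the same relative order, and all the freedom lies in the countable chains sitting strictly before the first marked point, strictly between consecutive marked points, and strictly after the last.

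Having isolated the chain part, I would finish as follows: writing $P = C_0 + P_{i_1} + C_1 + P_{i_2} + \cdots + P_{i_k} + C_k$ with each $C_t$ a countable chain (possibly empty), a sibling of $P$ is, up to isomorphism, $C_0' + P_{i_1} + \cdots + P_{i_k} + C_k'$ where $C_t' \approx$ (the relevant re-assembled chain) $C_t$ --- more precisely, the relevant equimorphy is of the single countable chain $C := C_0 + \mathbf 1 + C_1 + \mathbf 1 + \cdots + \mathbf 1 + C_k$ obtained by collapsing each disconnected block to one point, since an embedding must respect the order of the blocks. Therefore $Sib(P)$ is bounded above by $Sib(C)$ for this countable chain $C$, and it is bounded below by $1$; by Theorem~\ref{Dichchain}, $Sib(C)\in\{1,\aleph_0,2^{\aleph_0}\}$, and one checks that the replacement operation $C\mapsto$ (reinsert the fixed blocks) is injective on isomorphism types, so $Sib(P)=Sib(C)\in\{1,\aleph_0,2^{\aleph_0}\}$.

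The step I expect to be the main obstacle is the second paragraph: rigorously ruling out that an embedding of a sibling could permute, merge, or ``shift'' the non-trivial summands --- in particular handling the case where a non-trivial summand of $P'$ receives images from several summands of $P$, or where the chain parts $C_t$ absorb copies of the disconnected blocks. The disconnectedness of the non-trivial summands plus property CCGC (so that Lemma~\ref{Uniquesummand} applies and a connected piece cannot be split across two summands) is exactly what prevents this, but making the bookkeeping airtight --- especially when some $C_t$ are infinite and could in principle contain images of disconnected blocks if those blocks embedded in chains, which they cannot since they are disconnected --- requires care; once that rigidity is established, the appeal to Theorem~\ref{Dichchain} is routine.
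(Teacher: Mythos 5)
Your proposal is correct and follows essentially the same route as the paper: Lemma \ref{Uniquesummand} gives the induced order-preserving map on indices, the disconnectedness of the non-trivial summands together with Lemma \ref{Connecteddisconnected} and the finiteness of their number forces that map to fix each non-trivial summand, and the sibling count then reduces to the gap chains via Theorem \ref{Dichchain}. The only soft spot is your final identification $Sib(P)=Sib(C)$ for the single collapsed chain $C$ --- a self-embedding of the unmarked chain $C$ need not respect the marked points, so that equality is not immediate; the paper instead concludes $Sib(P)=\max_k Sib(I_k)$ over the gap chains, which follows directly from your rigidity step and yields the trichotomy just as well.
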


\begin{proof}
Let $f$ be an embedding of $P$ and $\hat{f}$ be defined as in Lemma \ref{Uniquesummand}. Let $K$ be the chain of indices $i\in I$ such that $P_i$ is  non-trivial. If $K=\emptyset$, then $P=I$  and there is nothing to prove (the statement holds for a countable chain by Theorem \ref{Dichchain}). Assume that $K\neq\emptyset$. We know that $\hat{f}$ is order-preserving. We show that $\hat{f}$ is injective. For the sake of a contradiction, assume that $\hat{f}(i)=\hat{f}(j)=k$ for some $i < j\in I$. It implies that $k\in K$. Since $P_k$ is non-trivial, $\hat{f}(k), \hat{f}^2(k), \ldots\in K$. Since $K$ is a finite chain and $\hat{f}$ is order-preserving, we conclude that for some $l\in K$ and some integer $m$, $\hat{f}^m(k)=\hat{f}^{m+1}(k)=l$. Without loss of generality, assume that $i\leq_I l$. Let $\Bar{I}=\{i'\in I : i\leq_I i' \leq_I l\}$. We have $\hat{f}^{m+1}(\Bar{I})=l$. Now consider $P'=+_{i' \in \Bar{I}} P_{i'}$. From $\hat{f}^{m+1}(\Bar{I})=l$, we conclude that $f^{m+1}$ embeds $P'$ into $P_l$. Clearly, $P_l$ embeds into $P'$. That is $P'\approx P_l$. Since $l\in K$, $P_l$ is disconnected. Also, $P_l$ is equimorphic to a connected $N$-free poset $P'$ which implies that $Sib(P_l)=\infty$ by Lemma \ref{Connecteddisconnected}, a contradiction. Therefore, $\hat{f}$ is a chain embedding of $I$. Further, since $K$ is finite, $\hat{f}$ is the identity on $K$. Consequently, $f(P_k)\subseteq P_k$ for each $k\in K$.

Set $K:=\underline{n}$. We may represent $P$ as $P=I_0+P_0+\cdots+P_{n-1}+I_n$ where the $P_k$ are the non-trivial summands of $P$, $I_0=\{x\in P : \{x\} <_P P_0\}$,  $I_k=\{x\in P : P_{k-1} <_P \{x\} <_P P_k\}$ for every $1\leq k\leq n-1$ and $I_n=\{x\in P : P_{n-1} <_P \{x\}\}$. Notice that each $I_k$ is a countable chain. By the above argument, each sibling of $P$ is of the form $Q=J_0+Q_0+\cdots+Q_{n-1}+J_n$ where $J_k\approx I_k$, $k\in\underline{n+1}$, and $Q_k\approx P_k$, $k\in\underline{n}$.  Since $Sib(P_k)=1$, we have $Q_k\cong P_k$. Moreover, each countable chain has one or countably many or continuum many siblings by Theorem \ref{Dichchain}.  Hence, $Sib(P)=\max\{Sib(I_k) : k\in\underline{n+1}\}$. It follows that $Sib(P)=1$ or $\aleph_0$ or $2^{\aleph_0}$.  
\end{proof}

It remains to verify the case in which a linear sum contains infinitely many non-trivial summands, each having only one sibling. However, we will show that when a linear sum has infinitely many non-trivial summands, it has continuum many siblings, regardless of the sibling number of its summands. 
We may consider a linear sum $P=+_{i\in I}P_i$ as the sum $\sum C$ of a labelled chain $C=(I,\ell)$ where $\ell:I\to \mathcal{N}_{\leq\omega}\times \{-1\}$ is defined by $\ell(i)=(P_i,r(i))$ and $r(i)=-1$ for each $i\in I$. Since the map $r$ is constant, we may abuse the notation and write $\ell(i)=P_i$. Note that the labels belong to a b.q.o. We denote by $K$ the chain of $i\in I$ such that $P_i$ is non-trivial. Our proof contains two main ingredients: Lemmas \ref{ConstructionLS} and \ref{Noleastlinearcontinuum}. 

Let $n\geq 1$ and consider the chains $\underline{n}$ and $\underline{n+2}$. We denote by $A_n$, resp $B_n$, the poset obtained by replacing each $m\in \underline{n}$, resp $0 < m < n+1$, with a two-element antichain. Note that each $A_n$ and $B_n$ is a linear sum whose summands have property CCGC with context chains $\underline{n}$ and $\underline{n+2}$, respectively.  
Two elements $i$ and $j$ of $K$ are equivalent, denoted by $i\equiv_K j$, just in case every element of the interval in $I$ with endpoints $i$ and $j$ is in $K$. It is clear that $\equiv_K$ is an equivalence relation. We denote by $\mathcal{K}$ the set of equivalence classes of $K$. The equivalence classes of $K$ are maximal intervals of $I$ that are contained in $K$. 

% Lemma 6.20
\begin{lemma} \label{ConstructionLS}
Let $P=+_{i\in I} P_i$ be a countable linear sum such that K is coinitial in I and without least element. Then, there are $2^{\aleph_0}$ pairwise non-isomorphic linear sums $P_f=\sum C_f$ where $f\in \{0,1\}^\mathbb{N}$. 
\end{lemma}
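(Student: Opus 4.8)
The plan is to mimic the construction used in Lemma~\ref{Nonisoplc} for general poset labelled sums with no least element, but now restricting to the case $r\equiv -1$ while keeping a way to ``code'' the map $f$ into the isomorphism type of the resulting linear sum. First I would use that $K$ is coinitial in $I$ and has no least element to choose an infinite strictly descending sequence $(a_n)_{n<\omega}$ of elements of $K$ that is coinitial in $I$, and moreover such that each $a_n$ lies in a single class of $\mathcal{K}$ (so the local structure near $a_n$ looks like a nontrivial summand flanked by nontrivial summands). By Lemma~\ref{Labelledchainsproperties}~(1), passing to an initial segment $J$ of $I$ we may also assume the restriction $C_{\upharpoonright J}$ is left indecomposable; this will be needed later for the equimorphy argument (though equimorphy is presumably the statement of the next lemma, not this one).

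Next I would describe the coding. Near each $a_n$ I insert a finite block of new indices whose summands are copies of $A_{2f(n)+c}$ and $B_{2f(n)+c}$ for suitable fixed constants $c$ --- concretely, one should insert a copy of the poset $B_{2f(n)+2}$ (a linear sum of two-element antichains, length depending on $f(n)$) sandwiched between copies of the pattern-fixing posets $A_n$, so that the ``length'' of the all-two-element-antichain block at level $n$ is $2f(n)+2$. Call the resulting labelled chain $C_f=(\bar I,\ell_f)$ and set $P_f:=\sum C_f=+_{i\in\bar I}(P_f)_i$. One checks routinely that each $P_f$ is again a countable linear sum of $NE$-free posets whose summands have property CCGC (two-element antichains are disconnected hence have property CCGC; trivial summands do too), and that its chain of nontrivial-summand indices is still coinitial with no least element.

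Then comes the separation argument, which is where the real content lies. Suppose $P_f\cong P_g$ for $f\ne g$. By Lemma~\ref{Uniquesummand} an isomorphism $P_f\to P_g$ induces a chain isomorphism $\hat f:\bar I\to\bar I$ with $(P_f)_i\cong (P_g)_{\hat f(i)}$ for all $i$. Now I exploit that $\hat f$ must match up the inserted coding blocks: the blocks I inserted are characterised (among initial-segment-final-data) by being maximal runs of indices carrying two-element-antichain summands of even length, flanked by the fixed $A_n$-patterns, and the original summands $P_i$ were, if necessary, perturbed (as in Lemma~\ref{Nonisoplc}, by adding one point so nontrivial summands become odd-sized chains/antichains when they happen to be chains/antichains of even size) precisely so that no original block can be confused with a coding block. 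Hence $\hat f$ carries the $n$th coding block to the $m$th coding block for some $m$, forcing the length $2f(n)+2$ to equal $2g(m)+2$, i.e.\ $f(n)=g(m)$; and since $\hat f$ is order preserving and the coding blocks are indexed by $\omega$ in decreasing order, $n\mapsto m$ is an order isomorphism between two final segments of $\mathbb{N}$. Applying Lemma~\ref{Continuummaps} to a family of $2^{\aleph_0}$ such $f$'s then yields $2^{\aleph_0}$ pairwise non-isomorphic linear sums $P_f$, as claimed.

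The main obstacle I anticipate is the rigidity bookkeeping in the separation step: making sure the coding blocks are genuinely distinguishable from everything else in $\bar I$ under an arbitrary chain isomorphism $\hat f$. This requires (i) that property CCGC of all summands is preserved so Lemma~\ref{Uniquesummand} applies and the induced $\hat f$ is an honest chain isomorphism, and (ii) a careful choice of which original summands to perturb (only those that are themselves chains or antichains of even size, exactly as in Lemma~\ref{Nonisoplc}) so that the parity/shape invariant ``even-sized antichain'' uniquely flags the inserted vertices. Everything else --- the verification that each $P_f$ is a legitimate countable linear sum of $NE$-free posets with CCGC summands, and that $\hat f$ preserves the coinitial structure --- is routine and parallels the corresponding passages of Lemma~\ref{Nonisoplc}.
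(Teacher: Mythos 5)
Your proposal is correct for what this lemma actually asserts (the existence of $2^{\aleph_0}$ pairwise non-isomorphic linear sums), and it follows the same architecture as the paper: pass to a left-indecomposable initial segment $J$ via Lemma~\ref{Labelledchainsproperties}~(1), insert coding blocks of type $B_{2f(n)+2}$ along a descending sequence $(a_n)$ in $K$ coinitial in $I$, use Lemma~\ref{Uniquesummand} to turn an isomorphism $\sum C_f\cong\sum C_g$ into a label-preserving chain isomorphism of $\bar I$, match the coding blocks, and finish with Lemma~\ref{Continuummaps}. The one genuine divergence is your disambiguation device. The paper never modifies any original summand $P_i$; instead it pads each old maximal run $K'\in\mathcal{K}$ inside $J$ of length $2$ or $4$ by inserting one extra index labelled $A_1$, so that \emph{run length} (a labelled chain isomorphic to $A_2$ or $A_4$ flanked by the singleton endpoints of a $B$-block) is the invariant that flags the coding blocks. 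You instead perturb even-sized antichain summands to odd size, so that ``two-element antichain'' flags the inserted indices. Your invariant does separate the $P_f$ (after the same passage to final segments of $\mathbb{N}$ to handle blocks whose image meets $F$), but it is a worse fit for the companion Lemma~\ref{Noleastlinearcontinuum}: there the embedding $\sum C_f\le\sum C$ relies on the fact that the only new labels are singletons and two-element antichains, each of which embeds into a non-trivial (hence disconnected) summand $P_{a_m}$; a perturbed summand $\bar P_i\supsetneq P_i$ does not obviously embed back into $\sum C$ and would force you to redo that argument along the lines of Lemma~\ref{ReducedequimorphP}. Also, be aware that your phrase ``sandwiched between copies of the pattern-fixing posets $A_n$'' would, if taken literally, merge the antichain runs and destroy the invariant; the block should be exactly one copy of $B_{2f(n)+2}$, whose singleton endpoints isolate the coding run from the neighbouring old summands.
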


\begin{proof}
It follows that $I$ has no least element. Set $C:=(I,\ell)$ where $\ell(i)=P_i$. 
Recall that in this case, by Lemma \ref{Labelledchainsproperties} (1), there is an initial segment $J$ of $I$ such that $C_{\upharpoonright J}$ is left indecomposable. For each $K'\subset J$ with $K'\in\mathcal{K}$, if $|K'|=2$ or $|K'|=4$, then insert a new element $d$ such that $d$ covers the greatest element of $K'$ and set $\ell'(d):=A_1$. Thus, $C_{\upharpoonright K'}$ is replaced with a  labelled chain which is isomorphic to $A_3$ or $A_5$. Let $Y$ be the collection of all these new elements $d$. Set $J':=J\cup Y$.   
We select an infinite descending sequence $(a_n)_{n<\omega}$ of elements of $K\cap J$ coinitial in $J$. To each map $f:\mathbb{N}\to\{0,1\}$ we associate a  labelled chain $C_f$ as follows. Let $f:\mathbb{N}\to\{0,1\}$. For $i\in I\setminus Y$ set $\ell_f(i):=\ell(i)$ and for $i\in Y$ set $\ell_f(i):=\ell'(i)$.  For each $n<\omega$, set $k(n):=2f(n)+4$ and insert elements $b_{1,n} < \cdots < b_{k(n),n}$ such that $b_{1,n}$ covers $a_n$ and $b_{i+1,n}$ covers $b_{i,n}$ for each $1\leq i\leq k(n)-1$. Set $\ell_f(b_{1,n}):=b_{1,n}$, $\ell_f(b_{k(n),n}):=b_{k(n),n}$ and $\ell_f(b_{i,n}):=A_1$ where $1<i<k(n)$. Indeed, the chain $b_{1,n}<\cdots<b_{k(n),n}$ is the context chain of $B_{2f(n)+2}$. Let $b_n$ be the chain $b_{1,n}<\cdots<b_{k(n),n}$. Then, $(b_n)_{n<\omega}$ is a sequence of chains such that $b_{n+1}<b_n$.  Set $X:=\bigcup_{n<\omega} b_n$ which is coinitial in $J$. Set $\Bar{J}:=J'\cup X$, $F:=I\setminus \Bar{J}$ and $\Bar{I}:=F\cup \Bar{J}$. Then, $C_f:=(\Bar{I},\ell_f)$ is a labelled chain. Thus, we obtain a sum $\sum C_f$ which is a linear sum whose summands have property CCGC. That is, $\sum C_f=+_{i\in\Bar{I}}P_i$ such that $K$ has no least element and $K$ is coinitial in $\Bar{I}$.

Suppose that $f, g$ are two maps from $\mathbb{N}$ to $\{0,1\}$ such that $\sum C_f\cong \sum C_g$ by some isomorphism $\phi$. Since the summands of these linear sums have property CCGC,  by Lemma \ref{Uniquesummand}, $\phi$ induces an isomorphism $h$ on $\Bar{I}$ preserving the labels, that is $\ell_f(i)=\ell_g(h(i))$ for each $i\in \Bar{I}$. If for each $n<\omega$, $h(b_n)\subset F$, then $h(\Bar{I})\subseteq F$ because $X$ is coinitial in $\Bar{I}$. But this means that $h$ is not surjective, a contradiction. Therefore, $h(b_n)\cap \Bar{J}\neq\emptyset$ for some $n$. Since each $b_n$ is the context chain of $B_2$ or $B_4$ and since there is no  labelled chain $C_{\upharpoonright K'}$, $K'\in\mathcal{K}$, in $C_{g\upharpoonright\Bar{J}}$ which is isomorphic to $A_2$ or $A_4$, it follows that $h(b_n)=b_m$ for some $m$ and consequently, for each $k\geq 1$, $h(b_{n+k})=b_{m+k}$. Thus, $h$ may be regarded as an isomorphism from a final segment $D$ of $\mathbb{N}$ onto another $D'$ such that $g(h(n))=f(n)$ for each $n\in D$. By Lemma \ref{Continuummaps}, there are $2^{\aleph_0}$ maps in $\{0,1\}^\mathbb{N}$ such that for two distinct maps $f$ and $g$, we have $\sum C_f\ncong \sum C_g$. For each $f\in\{0,1\}^\mathbb{N}$, set $P_f:=\sum C_f$. Then, the set $\{P_f\}_{f\in\{0,1\}^\mathbb{N}}$ has the desired properties. 
\end{proof}

In lemma \ref{ConstructionLS}, we observe that the order on $P_f:=\sum C_f$ extends both $\sum C_{f\upharpoonright\Bar{J}}$ and $\sum C_{\upharpoonright F}$ as follows: if $x\in\sum C_{f\upharpoonright\Bar{J}}$ and $y\in \sum C_{\upharpoonright F}$, then $x\in P_i$ and $y\in P_j$ for some $i\in \Bar{J}$ and $j\in F$. We have $i<j$ and since $r(i)=-1$, it follows that $x <_{P_f} y$. Therefore,  $\sum C_f =\sum C_{f\upharpoonright\Bar{J}}+\sum C_{\upharpoonright F}$.

In order to show that the $\sum C_f$ in Lemma \ref{ConstructionLS} are embeddable into $\sum C$, first we prove the following claim.

% Claim 6.21
\begin{claim} \label{J'embedsJ}
We have $C_{\upharpoonright J'}\leq C_{\upharpoonright J}$. Moreover, $C_{\upharpoonright J'}$ is left indecomposable.
\end{claim}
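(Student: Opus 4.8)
The plan is to establish the embedding $C_{\upharpoonright J'}\le C_{\upharpoonright J}$ first (this is where the content lies) and then read off the left-indecomposability of $C_{\upharpoonright J'}$ from it with almost no extra work. Recall that $J'=J\cup Y$, where $Y$ holds the new elements $d$, one per class $K'\in\mathcal{K}$ with $K'\subseteq J$ and $|K'|\in\{2,4\}$; each such $d$ covers $\max K'$ and carries the label $A_1$, a two-element antichain. The one fact I would isolate at the outset is that $A_1$ embeds into \emph{every} non-trivial summand: if $i\in K$, then $P_i$ has property CCGC and more than one element, so $CG(P_i)$ is not complete, $P_i$ contains two incomparable elements, and hence $A_1\hookrightarrow P_i$. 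In particular $A_1\hookrightarrow P_{\max K'}$ for each class $K'$ in question.

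For $C_{\upharpoonright J'}\le C_{\upharpoonright J}$ I would factor the embedding through $\underline{2}.C_{\upharpoonright J}$. Write $\underline{2}=\{1,2\}$ with $1<2$, so that $\underline{2}.J$ consists of pairs $(1,j),(2,j)$ with $(1,j)<(2,j)<(1,j')$ whenever $j<j'$ in $J$, and $\ell_2(m,j)=\ell(j)=P_j$. Define $h\colon J'\to\underline{2}.J$ by $h(j)=(1,j)$ for $j\in J$ and $h(d)=(2,\max K')$ for the element $d$ attached to the class $K'$. Because distinct $d$'s arise from disjoint classes and the sets $\{(1,\cdot)\}$, $\{(2,\cdot)\}$ are disjoint, $h$ is injective; a short case analysis---using that $d$ covers $\max K'$ and that two distinct classes of $K$ are separated in $I$ by an element outside $K$---shows $h$ is an order embedding. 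By the first paragraph $\ell'(d)=A_1\le P_{\max K'}=\ell_2(h(d))$, while $\ell'(j)=P_j=\ell_2(h(j))$, so $h$ is a labelled-chain embedding and $C_{\upharpoonright J'}\le\underline{2}.C_{\upharpoonright J}$. It then remains to note $\underline{2}.C_{\upharpoonright J}\le C_{\upharpoonright J}$, which is the last inequality in the proof of Lemma \ref{ReducedequimorphP} (since $C_{\upharpoonright J}$ is left-indecomposable one decomposes it via Lemma \ref{Labelledchainsproperties}(2) and applies Lemma \ref{ncopies} to the indecomposable pieces). Composing yields $C_{\upharpoonright J'}\le C_{\upharpoonright J}$.

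For the left-indecomposability of $C_{\upharpoonright J'}$, the key observation is that every $d\in Y$ covers $\max K'\in J$, so no non-empty initial segment of $J'$ is contained in $Y$. Hence if $L$ is any non-empty initial segment of $J'$, then $L\cap J$ is a non-empty initial segment of $J$; since $C_{\upharpoonright J}$ is left-indecomposable, $C_{\upharpoonright J}\le C_{\upharpoonright L\cap J}\le C_{\upharpoonright L}$, and combining this with the first part gives $C_{\upharpoonright J'}\le C_{\upharpoonright J}\le C_{\upharpoonright L}$. Thus $C_{\upharpoonright J'}$ embeds into each of its non-empty initial segments, i.e.\ it is left-indecomposable.

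I do not anticipate a serious obstacle: the surgery defining $J'$ was arranged precisely so that this claim is almost immediate. The only genuinely non-trivial input is the inequality $\underline{2}.C_{\upharpoonright J}\le C_{\upharpoonright J}$, which has already been supplied inside the proof of Lemma \ref{ReducedequimorphP}; beyond that one merely has to verify that the inserted labels $A_1$ are dominated, which is exactly what property CCGC of the summands guarantees. If one preferred to avoid routing through $\underline{2}.C_{\upharpoonright J}$, the alternative is to mimic the telescoping induction of Lemma \ref{ReducedequimorphP} directly, grouping each $d\in Y$ with the indecomposable piece of $C_{\upharpoonright J}$ it lies in.
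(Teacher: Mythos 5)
Your proposal is correct and follows essentially the same route as the paper: both factor the embedding as $C_{\upharpoonright J'}\leq \underline{2}.C_{\upharpoonright J}\leq C_{\upharpoonright J}$ via the map sending $j\mapsto(0,j)$ and each inserted $d$ to the second copy of the element of $K\cap J$ it covers, using Lemma \ref{ncopies} for the second inequality, and then derive left-indecomposability of $C_{\upharpoonright J'}$ from that of $C_{\upharpoonright J}$ by intersecting initial segments of $J'$ with $J$. The only cosmetic difference is your justification that the label $A_1$ is dominated (non-trivial summand with property CCGC must contain two incomparable elements, versus the paper's phrasing that $\ell(c_n)$ is a direct sum of at least two non-empty posets), and both are valid.
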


\begin{proof}
We show that the following inequalities hold.
$$ C_{\upharpoonright J'}\leq  \underline{2}.C_{\upharpoonright J}\leq  C_{\upharpoonright J}.$$

For the first inequality, recall $Y$ in Lemma \ref{ConstructionLS}  which is countable. Let $d_0, \ldots, d_n, \ldots$ be an enumeration of $Y$. For each $n$, let $c_n\in J$ be such that $d_n$ covers $c_n$. Therefore, $c_n\in K$. Define an embedding $\varphi:C_{\upharpoonright J'}\to\underline{2}.C_{\upharpoonright J}$ as follows. Define $f:J'\to\underline{2}.J$ by $f(j)=(0,j)$ if $j\in J$ and $f(d_n)=(1,c_n)$ for each $n$. Since $\ell'(d_n)$ is a 2-element antichain and $\ell((1,c_n))=\ell(c_n)$ is a direct sum of at least two non-empty posets, it follows that $\ell'(d_n)$ embeds in $\ell(f(d_n))$. Therefore, $C_{\upharpoonright J'}\leq \underline{2}.C_{\upharpoonright J}$. 

The second inequality is because $\underline{2}.C_{\upharpoonright J}\leq C_{\upharpoonright J}$, a fact which follows from Lemma \ref{ncopies} since $C_{\upharpoonright J}$ is left indecomposable.  

We know that $C_{\upharpoonright J}$ is left indecomposable. Let $L'+R'$ be a partition of $J'$ into an initial and a final segment and set $L:=L'\setminus Y$ and $R:=R'\setminus Y$. Then $L+R$ is a partition of $J$ into an initial and a final segment. By the inequality above and the fact that $C_{\upharpoonright J}$ is left indecomposable, we have $C_{\upharpoonright J'}\leq C_{\upharpoonright J}\leq C_{\upharpoonright L}\leq C_{\upharpoonright L'}$. Thus, $C_{\upharpoonright J'}$ is left indecomposable. 
\end{proof}

% Lemma 6.22
\begin{lemma} \label{Noleastlinearcontinuum}
Let $P=+_{i\in I}P_i$ be a countable linear sum such that K is coinitial in I and has no least element. Then $Sib(P)=2^{\aleph_0}$. 
\end{lemma}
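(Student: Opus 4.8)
The goal is to show that if $P=+_{i\in I}P_i$ is a countable linear sum whose chain $K$ of indices of non-trivial summands is coinitial in $I$ and has no least element, then $Sib(P)=2^{\aleph_0}$. Since there are only $2^{\aleph_0}$ posets on a countable set, the upper bound is automatic, so the real content is producing continuum many pairwise non-isomorphic siblings. Lemma~\ref{ConstructionLS} already supplies $2^{\aleph_0}$ pairwise non-isomorphic linear sums $P_f=\sum C_f$, $f\in\{0,1\}^{\mathbb N}$, and the observation after it records that $\sum C_f=\sum C_{f\upharpoonright\bar J}+\sum C_{\upharpoonright F}$. So it suffices to prove that every $P_f$ is equimorphic to $P$; combined with the construction this gives $2^{\aleph_0}$ pairwise non-isomorphic siblings of $P$, hence the equality.

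\textbf{Establishing $P_f\approx P$.} One direction is easy: $C$ embeds into $C_f$ because $\bar I\supseteq I$ as a labelled chain (the new elements were only inserted, and on old elements $\ell_f=\ell$), so $P=\sum C\leq\sum C_f=P_f$. For the reverse, $P_f\leq P$, I would mimic the proof of Lemma~\ref{ReducedequimorphP}. Split $P_f=\sum C_{f\upharpoonright\bar J}+\sum C_{\upharpoonright F}$; since $f$ does not affect $F$, it is enough to show $\sum C_{f\upharpoonright\bar J}\leq\sum C_{\upharpoonright J}$. Here I use Claim~\ref{J'embedsJ}: $C_{\upharpoonright J'}$ is left indecomposable and $C_{\upharpoonright J'}\leq C_{\upharpoonright J}$. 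Introduce the auxiliary labelled chain $D$ on $\underline 2.J'\cup X$ (with $X=\bigcup_n b_n$, $b_{1,n}$ covering $(1,a_n)$, the $b_{i,n}$ stacked above it, labels as in $C_f$), and chase the chain of inequalities
$$\sum C_{f\upharpoonright\bar J}\leq\sum D\leq\sum\underline 2.C_{\upharpoonright J'}\leq\sum C_{\upharpoonright J'}\leq\sum C_{\upharpoonright J}.$$
The first inequality is the embedding $j\mapsto(0,j)$ on $J'$, identity on $X$. The last two come from Claim~\ref{J'embedsJ} and Lemma~\ref{ncopies} applied to the left-indecomposable $\underline 2.C_{\upharpoonright J'}$. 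The middle inequality $\sum D\leq\sum\underline 2.C_{\upharpoonright J'}$ is the heart of the matter, proved exactly as in Lemma~\ref{ReducedequimorphP}: write $\underline 2.C_{\upharpoonright J'}=\sum^*_{n<\omega}C_n$ with each $C_n$ indecomposable and each embedding into infinitely many later $C_m$ (Lemma~\ref{Labelledchainsproperties}(2)), match $D$'s decomposition $\sum^*_n D_n$ against finite blocks $C_{m+k}+\cdots+C_m$ by induction, and at each stage absorb the finitely many inserted singletons labelled by two- or four-element antichains of the form $A_1$ (the $b_{i,n}$ with $1<i<k(n)$, and the $d\in Y$) into finitely many non-trivial summands $P_{j_1},\dots,P_{j_l}$ coming from $K$ --- here one uses that $K$ is coinitial in $J$ and that each such non-trivial summand, being a direct sum of at least two non-empty posets, contains a two-element antichain, so a linear sum of $l$ of them contains $A_1$'s block-by-block copy of $B_{2f(n)+2}$ or $A_3,A_5$.

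\textbf{Main obstacle.} The delicate point is the absorption step in the middle inequality: the inserted blocks in $C_f$ are chains $b_{1,n}<\cdots<b_{k(n),n}$ forming the context of $B_{2f(n)+2}$, i.e. a linear order of two-element antichains capped by two singletons, and they must be embedded into a finite linear sum of non-trivial summands $P_j$, $j\in K\cap J'$, sitting below the current block of $C_n$. Because these non-trivial $P_j$ are direct sums of $\geq 2$ non-empty connected posets they each contain $\bar K_2$, and a linear sum of enough of them realizes any finite $B_m$; the coinitiality of $K$ in $J$ (preserved in $J'$ by Claim~\ref{J'embedsJ}, since the new elements $d\in Y$ were placed adjacent to elements of $K$ and can be taken with them) guarantees arbitrarily many such summands are available below any prescribed point. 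Once this bookkeeping is set up --- essentially transcribing the induction of Lemma~\ref{ReducedequimorphP} with ``$A_1$'' in place of the two/four-element antichains and ``$B_{2f(n)+2}$'' in place of the inserted chains --- the rest is routine, and we conclude $P_f\approx P$ for every $f$, whence $Sib(P)=2^{\aleph_0}$.
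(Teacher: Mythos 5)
Your proposal is correct and follows essentially the same route as the paper: reduce to embedding $\sum C_{f\upharpoonright\bar J}$ into $\sum C_{\upharpoonright J'}$, decompose the left-indecomposable labelled chain as an $\omega^*$-sum of indecomposables via Lemma \ref{Labelledchainsproperties}(2), and absorb the inserted elements (labelled by singletons or two-element antichains) into the non-trivial CCGC summands that the coinitiality of $K$ supplies arbitrarily far down. The only cosmetic differences are that you route through the extra intermediate $\underline{2}.C_{\upharpoonright J'}$ as in Lemma \ref{ReducedequimorphP}, whereas the paper's proof of this lemma embeds $\sum C_{f\upharpoonright\bar J}$ directly into $\sum C_{\upharpoonright J'}$ (the duplication trick having already been spent inside Claim \ref{J'embedsJ}), and that Lemma \ref{ncopies} should be cited as applied to $C_{\upharpoonright J'}$ itself rather than to $\underline{2}.C_{\upharpoonright J'}$.
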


\begin{proof}
Let $f:\mathbb{N}\to\{0,1\}$ and $C_f$ be the labelled chain constructed in Lemma \ref{ConstructionLS}. We prove that $\sum C_f\leq \sum C$. We have $C_f= C_{f\upharpoonright \Bar{J}}+C_{\upharpoonright F}$ because $f$ does not change $F$. By the observation given after Lemma \ref{ConstructionLS} we have $\sum C_f =\sum C_{f\upharpoonright\Bar{J}}+\sum C_{\upharpoonright F}$. It suffices to prove that $\sum C_{f\upharpoonright \Bar{J}}\leq \sum C_{\upharpoonright J}$.  We prove that 
$\sum C_{f\upharpoonright \Bar{J}}\leq \sum C_{\upharpoonright J'}$ because then the result follows from Claim \ref{J'embedsJ}. 

By Claim \ref{J'embedsJ}, $C_{\upharpoonright J'}$ is left indecomposable, so, we can write it as $\sum^*_{n<\omega} C_n$ where each $C_n$ is indecomposable and the set of $m$ such that $C_n\leq C_m$ is infinite. 
For each $n$, let $J_n$ be the domain of $C_n$. Set $L_n:=J_n\cup\bigcup_{a_m\in J_n} b_m$ and $D_n:=(L_n,\ell_{f\upharpoonright L_n})$. Then we have $C_{f\upharpoonright \Bar{J}}=\sum^*_{n<\omega} D_n$. We define an embedding from $\sum C_{f\upharpoonright \Bar{J}}$ into $\sum C_{\upharpoonright J'}$ by induction. Let $n$ be given. Suppose that $\sum(D_{n-1}+\cdots+D_0)$ has been embedded into $\sum(C_{\varphi(n-1)}+\cdots+C_0)$. The labelled chain $D_n$ consists of $C_n$ plus finitely many elements, each labelled by a singleton or a 2-element antichain. Thus, $D_n$ can be written
$D_{n,0}+\alpha_{n,0}+D_{n,1}+\cdots+\alpha_{n,k_n-1}+D_{n,k_n}$
with $D_{n,0}+D_{n,1}+\cdots+D_{n,k_n}=C_n$ and each $\alpha_{n,i}$ is a finite chain whose elements are labelled by either a singleton or an antichain of size 2. Suppose that $\sum(D_{n,i}+\alpha_{n,i}+\cdots+\alpha_{n,k_n-1}+D_{n,k_n})$ has been embedded into $\sum(C_{m+k}+\cdots+C_m)$ for some $m > \varphi(n-1)$ and some $k$. We know that the sequence $(a_n)_{n<\omega}$ is coinitial in $J'$ and the $P_{a_n}$ are non-trivial. Pick a chain $a_{n_i}<\cdots<a_{n_0}< J_{m+k}$ of elements of $(a_n)_{n<\omega}$ such that its length equals to the size of the context chain of $\alpha_{n,i-1}$.  Then, $\alpha_{n,i-1}$ embeds in $P_{a_{n_i}}+\cdots+P_{a_{n_0}}$.  We have $D_{n,i-1}\leq C_n$. Further, $C_n$ embeds in infinitely many $C_m$. Therefore, we can find sufficiently large $l$ such that $J_l < a_{n_i}$ and $D_{n,i-1}$ embeds in $C_l$. Hence, $\sum D_n$ embeds in $\sum(C_{m+p}+\cdots+C_m)$ for some $p$. Then by setting $\varphi(n)=m+p$ we have $\sum(D_n+\cdots+D_0)\leq \sum(C_{\varphi(n)}+\cdots+C_0)$. This completes the induction step as well as the proof.
\end{proof}

Let $P$ be a poset. It is easy to verify that $P$ and $P^*$ have the same sibling number.  With this observation, if in a countable linear sum $P=+_{i\in I} P_i$, the chain $K$ has no greatest element and it is cofinal in $I$, then we may consider $P^*$ in which $K^*$ has no least element and it is coinitial in $I^*$. Then, by applying Lemmas \ref{ConstructionLS} and \ref{Noleastlinearcontinuum}, we get continuum many siblings for $P^*$ and thus for $P$. 

In both Lemmas \ref{ConstructionLS} and \ref{Noleastlinearcontinuum}, $K$ has no least (greatest) element and it is coinitial (cofinal) in $I$. This is not necessarily the case in a general linear sum. However, the following lemma implies that we can always find some interval $J$ of $I$ in which $K$ is coinitial (cofinal) in $J$ with no least (greatest) element.  

% Lemma 6.23
\begin{lemma} \label{Interval}
Let I be an infinite chain. Then there exists an interval J of I such that J has no least or greatest element. 
\end{lemma}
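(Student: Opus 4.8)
The plan is to dispose of two trivial cases and then reduce everything else to extracting a monotone $\omega$-sequence. First, if $I$ has no least element we are done with $J:=I$ (and dually if $I$ has no greatest element); so I may assume $I$ has a least element $a_0:=\min I$.

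Next I would peel off least elements one at a time. Define recursively $a_k:=\min\bigl(I\setminus\{a_0,\ldots,a_{k-1}\}\bigr)$ as long as this minimum exists; note the set involved is $I$ minus finitely many points, hence nonempty since $I$ is infinite, so the construction can only halt because a nonempty leftover fails to have a least element. If it does halt at some stage $n\geq 1$, then I would check that $I\setminus\{a_0,\ldots,a_{n-1}\}$ coincides with the final segment $\{x\in I : x>a_{n-1}\}$, so it is an interval of $I$; it is nonempty, and it has no least element by the reason the construction halted, so it is the required $J$. Otherwise the construction never halts, yielding a strictly increasing sequence $a_0<a_1<a_2<\cdots$; in that case I take $J:=\{x\in I : x<a_n \text{ for some } n\in\mathbb{N}\}$. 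This $J$ is nonempty (it contains $a_0$) and downward closed, hence an interval, and it has no greatest element: if $d$ were a greatest element of $J$ then $d<a_n$ for some $n$, while $a_n\in J$ (because $a_n<a_{n+1}$) forces $a_n\leq d$, a contradiction.

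The only delicate point is the bookkeeping in the halting case: one must verify that after removing the successively chosen minima $a_0,\ldots,a_{n-1}$ the leftover is genuinely a final segment of $I$, not merely some subset. This holds because each $a_k$ was chosen as the minimum of exactly what remained after deleting $a_0,\ldots,a_{k-1}$, so no element of $I$ strictly below $a_{n-1}$ survives the deletions; everything else is routine. I would also note that the lemma should be read as ``$J$ has no least element \emph{or} $J$ has no greatest element'', since $\omega$ already shows that a nonempty interval need not fail to have both, and that the argument uses no infinitary combinatorics beyond a single countable recursion.
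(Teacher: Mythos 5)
Your proof is correct and rests on the same construction as the paper's: iterate removal of least elements, and either the process stalls on a nonempty final segment with no least element, or it produces a strictly increasing $\omega$-sequence whose downward closure is an initial interval with no greatest element. The paper packages this as a proof by contradiction (removing both extremes of each successive interval), whereas you argue directly with an explicit case split, but the underlying idea — and your reading of the conclusion as a disjunction, which the intended application to $P$ versus $P^*$ confirms — is the same.
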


\begin{proof}
For the sake of a contradiction, assume that for each interval $J$ of $I$, $J$ has least and greatest element. Set $I_0:=I$ and let $x_0$, resp $y_0$, be the least, greatest, element of $I_0$. For $n\geq 1$, set $I_{n+1}:=I_n\setminus\{x_n,y_n\}$ and let $x_{n+1}$, resp $y_{n+1}$, be the least, resp greatest, element of $I_{n+1}$. Since $I$ is infinite, $I_n\neq\emptyset$ for every $n<\omega$. Then, the initial interval $J:=\{x\in I : \exists n, x < x_n\}$ has no greatest element, a contradiction. 
\end{proof}

% Proposition 6.24
\begin{proposition} \label{Infinitesummand}
Let $P=+_{i\in I} P_i$ be a countable linear sum with infinitely many non-trivial summands. Then, P has continuum many siblings. 
\end{proposition}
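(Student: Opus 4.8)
The plan is to reduce the general case to the two special configurations already handled by Lemmas~\ref{ConstructionLS} and \ref{Noleastlinearcontinuum}. Recall that $K\subseteq I$ denotes the chain of indices $i$ with $P_i$ non-trivial, and by hypothesis $K$ is infinite. First I would apply Lemma~\ref{Interval} to the infinite chain $K$ to produce an interval $J_0$ of $K$ that has no least element or no greatest element; by passing to $K^{*}$ and $P^{*}$ if necessary (using that $P$ and $P^{*}$ have the same sibling number), I may assume $J_0$ has no least element. The subtle point is that an interval of $K$ need not be an interval of $I$, so the next step is to enlarge it: let $J$ be the convex hull of $J_0$ inside $I$, i.e.\ the smallest interval of $I$ containing $J_0$. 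Then $J$ is an interval of $I$, $K\cap J$ contains $J_0$ and hence is coinitial in $J$, and $K\cap J$ has no least element (any element of $K\cap J$ lies weakly above some element of $J_0$, below which there are further elements of $J_0\subseteq K\cap J$). Thus the linear sum $P' := +_{i\in J}P_i$ is a countable linear sum of $NE$-free posets with property CCGC in which the non-trivial part is coinitial with no least element.

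Next I would invoke Lemma~\ref{Noleastlinearcontinuum} applied to $P'$ to conclude $Sib(P') = 2^{\aleph_0}$, obtaining continuum many pairwise non-isomorphic siblings $P'_{f}$, $f\in\{0,1\}^{\mathbb{N}}$, each of which (by the construction in Lemma~\ref{ConstructionLS}) is itself a linear sum of $NE$-free posets with property CCGC whose non-trivial part is coinitial and without least element. Now $P$ decomposes as $P = L + P' + R$ where $L = +_{i\in I,\ i < J}P_i$ and $R = +_{i\in I,\ i > J}P_i$ are the (possibly empty) pieces of the context chain lying strictly below and strictly above $J$. Replacing the middle block $P'$ by $P'_{f}$ yields posets $P_{f} := L + P'_{f} + R$, which are again linear sums with summands having property CCGC. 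Since each $P'_{f}\approx P'$, taking each summand of $P$ to the corresponding summand of $P_{f}$ and back shows $P_{f}\approx P$, so the $P_{f}$ are siblings of $P$.

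It remains to show the $P_{f}$ are pairwise non-isomorphic, and this is the step I expect to require the most care. Given an isomorphism $\varphi: P_{f}\to P_{g}$, Lemma~\ref{Uniquesummand} yields an order isomorphism $h$ of the common context chain $L + J' + R$ (where $J'$ is the enlarged index set of $P'_{f}$, which I should check is the same for all $f$) preserving the labels. The key is to argue that $h$ must map the middle block onto the middle block, so that it restricts to a label-preserving isomorphism of the context chains of $P'_{f}$ and $P'_{g}$; then non-isomorphism follows from Lemma~\ref{ConstructionLS}. To see the block is preserved, I would use that the distinguished coinitial sequences $(b_{n})$ inserted in the construction of $P'_{f}$ carry labels ($B_{2f(n)+2}$, realised via context chains marked by the $A_1$ blocks) that do not occur among the summands $P_i$ for $i$ outside $J$, exactly as in the argument of Lemma~\ref{ConstructionLS}: if $h$ sent the $b_{n}$ into $R$ it would fail to be surjective (the $b_n$ are coinitial in their block), and it cannot send them into $L$ by the same reasoning on the reversed order together with the label obstruction; hence $h(b_{n})=b_{m}$ for some $m$ and $h$ descends to an isomorphism of final segments of $\mathbb{N}$ with $g\circ h = f$, contradicting Lemma~\ref{Continuummaps}. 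The main obstacle is thus bookkeeping: verifying that the labels used in the construction genuinely cannot be matched by anything in $L$ or $R$ or by the trivial summands, which forces the block-preservation; once that is in place the counting is immediate and we conclude $Sib(P) = 2^{\aleph_0}$, in particular $Sib(P)=\infty$.
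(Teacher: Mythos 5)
Your overall strategy --- localize to a convex piece $J$ of $I$ on which Lemmas \ref{ConstructionLS} and \ref{Noleastlinearcontinuum} apply, replace the middle block by the continuum many siblings $P'_f$, and then argue that any isomorphism $L+P'_f+R\cong L+P'_g+R$ must carry the middle block onto itself --- is the same as the paper's. The gap is in that last step, which you correctly single out as the crux but underestimate: it is not mere bookkeeping, and with your choice of $J$ it fails. Taking $J$ to be the convex hull of an \emph{arbitrary} interval $J_0$ of $K$ without least element gives no control over $L\cap K$ and $R\cap K$: the outer pieces may contain infinitely many non-trivial summands, arranged in non-well-ordered fashion, and nothing prevents the summands indexed outside $J$ from realizing exactly the patterns $B_2$, $B_4$ (a singleton followed by two or four copies of $A_1$ followed by a singleton) used to encode $f$ --- the preprocessing in Lemma \ref{ConstructionLS} that destroys equivalence classes of $K$ of size $2$ or $4$ is carried out only inside $J$. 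So the ``label obstruction'' you invoke to keep $h(b_n)$ out of $L$ and $R$ is simply not available. The surjectivity argument is also broken once $L\neq\emptyset$: if every $h(b_n)$ lay in the final piece, one would only conclude that $h(L)$ covers $L$ together with the whole modified middle block, which is not a contradiction when $L$ itself contains infinitely many non-trivial summands.

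The paper repairs exactly this point by choosing $J$ more carefully: it first passes to the convex hull $\hat I$ of $K$ in $I$, so that the outermost pieces $I_1,I_2$ contain \emph{no} non-trivial summands at all, and then takes $J$ to be the \emph{maximal} final interval of $\hat I$ each of whose elements lies above a strictly descending sequence of elements of $K$; maximality forces $L\cap K$ to be well-ordered, where $L=\hat I\setminus J$. Block preservation is then obtained purely order-theoretically, peeling off $I_2$, then $L$, then $I_1$ one stage at a time: an isomorphism cannot push $J$ into $I_2$ because $I_2$ carries no non-trivial summands while every element of $J$ has one above it; it cannot push $J$ into $L$ because the image of a descending sequence of $K$-indices with no least element would violate the well-ordering of $L\cap K$; and symmetrically for $I_1$. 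If you adopt this choice of $J$ and this three-stage peeling, your argument goes through; as written, the pairwise non-isomorphism of the $P_f$ is not established.
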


\begin{proof}
Let $K$ be the chain of indices $i\in I$ such that $P_i$ is non-trivial. By assumption, $K$ is countably infinite. Let $I_1$, resp $I_2$, be the maximal interval of $I$ such that $I_1<_I K$, resp $K<_I I_2$. Then $I=I_1+\hat{I}+I_2$ where $\hat{I}$ is the minimal interval of $I$ containing $K$. By Lemma \ref{Interval}, without loss of generality, there exists a final interval $J$ of $\hat{I}$ and a final interval $\hat{K}$ of $K$ such that $\hat{K}$ has no least element and it is coinitial in $J$. Thus, if we set 
$$J:=\{i\in\hat{I} : \exists (k_n)_{n<\omega},\ \forall n,\  k_n\in K\ \text{and}\ k_{n+1} < k_n < i\},$$
then $J\neq\emptyset$ and it is maximal among all final intervals of $\hat{I}$ in which there is a descending sequence of elements of $K$ with no least element. Set $L:=\hat{I}\setminus J$. Then, $L\cap K$ is well-ordered because otherwise there is a descending sequence of elements of $K$ in $L$ which contradicts the maximality of $J$. We have $\hat{I}=L+J$ and also $I=I_1+L+J+I_2$. By Lemmas \ref{ConstructionLS} and \ref{Noleastlinearcontinuum}, the linear sum $Q:=+_{j\in J} P_j$ has $2^{\aleph_0}$ pairwise non-isomorphic siblings $\{Q_\alpha\}_{\alpha < \mathfrak{c}}$ where $\mathfrak{c}$ is the continuum. For each $\alpha<\mathfrak{c}$ set $\hat{Q}_\alpha:=Q_\alpha+(+_{i\in I_2} P_i)$. Suppose that $\hat{Q}_\alpha\cong \hat{Q}_\beta$ for two $\alpha\neq \beta$ by some isomorphism $\phi$. Since the $\hat{Q}_\alpha$ are linear sums whose summands have property CCGC, by Lemma \ref{Uniquesummand}, $\phi$ induces an isomorphism $h$ on $J+I_2$. If for some $x\in J$, $h(x)\in I_2$, then $x\notin K$. Since by assumption there is some $y\in J\cap K$ with $x<_I y$, we have $h(x)<_I h(y)$. But then $h(y)$ must be the index of a non-trivial summand in $+_{i\in I_2} P_i$ which is not possible. Therefore, $h(J)=J$ meaning that $Q_\alpha\cong Q_\beta$, a contradiction. Hence, the $\hat{Q}_\alpha$ are pairwise non-isomorphic. Next, for each $\alpha<\mathfrak{c}$ set $\hat{P}_\alpha:=(+_{i\in L}P_i)+\hat{Q}_\alpha$. Suppose that $\hat{P}_\alpha\cong \hat{P}_\beta$ for two $\alpha\neq\beta$ by an isomorphism $\phi$. By Lemma \ref{Uniquesummand}, $\phi$ induces an isomorphism $h$ on $L+J+I_2$. Suppose that for some $x\in J$, $h(x)\in L$. By definition of $J$, select a descending sequence $(k_n)_{n<\omega}$ of elements of $K$ which has no least element in $J$ and $k_n < x$ for each $n$. Then $(h(k_n))_{n<\omega}$ is a coinitial sequence of elements of $K$ in $L$ with no least element. But this is impossible because $L\cap K$ is well-ordered. Thus, $h(J+I_2)=J+I_2$ meaning that $\hat{Q}_\alpha\cong \hat{Q}_\beta$, a contradiction. Finally, for each $\alpha<\mathfrak{c}$ set $P_\alpha:=(+_{i\in I_1} P_i)+\hat{P}_\alpha$. By an argument similar to what we gave for the $\hat{Q}_\alpha$, it is proven that the $P_\alpha$ are pairwise non-isomorphic siblings of $P$. 
\end{proof}

Now we are ready to conclude the main result of this section. 

% Proposition 6.25
\begin{proposition} \label{Linearsumsiblings}
Let P be a countable linear sum of at least two non-empty NE-free posets with property CCGC. If each summand has one or infinitely many siblings, then, so does P. 
\end{proposition}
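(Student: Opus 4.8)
The plan is to argue by cases on the number of non-trivial summands of $P$, reducing each case to an earlier result. If $P$ has infinitely many non-trivial summands, Proposition~\ref{Infinitesummand} gives $Sib(P)=2^{\aleph_0}$, hence $Sib(P)=\infty$. So assume $P$ has only finitely many non-trivial summands. If each of them has exactly one sibling, Lemma~\ref{Finitesummand} gives $Sib(P)\in\{1,\aleph_0,2^{\aleph_0}\}$, so $Sib(P)=1$ or $\infty$ and we are done. It remains to treat the case in which $P$ has finitely many non-trivial summands and at least one of them, say $P_j$, has --- by the hypothesis on the summands, the only alternative being one sibling --- infinitely many siblings; the goal is then to prove $Sib(P)=\infty$.

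The crucial point for this last case is that such a summand $P_j$ can never be a connected linear sum. Since every summand of $P$ has property CCGC, one checks that the $P_i$ are precisely the components (the maximal proper strong submodules) of $P$, so that $P$ is, as in case (2) of Theorem~\ref{Pstructure}, a robust module of type $\{-1,+1\}$ and the root of its decomposition tree. A non-trivial summand $P_j$ is then a strong module of $P$ which is either robust --- in which case it is a child of the root, so by density of the valuation (Corollary~\ref{Dense}) its type is $0$, i.e.\ $P_j$ is a direct sum and hence disconnected --- or limit, in which case, since by Lemma~\ref{BD} the robust modules of $P_j$ containing a fixed element already cover $P_j$ and therefore have no largest member, the decomposition tree of $P_j$ has no least element and $P_j$ is the sum of a labelled chain with no least element by Lemma~\ref{Posetreduced}. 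By Theorem~\ref{Pstructure} these two alternatives are exhaustive.

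To finish: in the first alternative, $P_j$ is the sum of a labelled chain with no least element, and the first clause of Proposition~\ref{InfiniteDproperty} gives $Sib(P)=\infty$. In the second alternative, $P_j$ is disconnected, hence automatically has property CCGC (the complement of a disconnected comparability graph with at least two components is connected); now either some sibling of $P_j$ is connected, so the last clause of Proposition~\ref{InfiniteDproperty} applies, or else all infinitely many siblings of $P_j$ are disconnected, hence all have property CCGC, and Lemma~\ref{InfiniteCproperty} applies. In every case $Sib(P)=\infty$, completing the proof.

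The step I expect to be the main obstacle is this structural one --- ruling out a non-trivial summand that is itself a connected linear sum of CCGC pieces --- which cannot be handled by reasoning about siblings directly and forces the detour through the canonical decomposition and Corollary~\ref{Dense}. A minor auxiliary point, already arranged in the earlier constructions, is that the continuum-many siblings produced by Theorem~\ref{Reduced} all retain property CCGC (so that Lemma~\ref{InfiniteCproperty} is applicable); this was built into Lemmas~\ref{Nonisoplc} and~\ref{ReducedequimorphP} via Proposition~\ref{Plsccgc}.
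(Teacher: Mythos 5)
Your proof is correct and follows essentially the same route as the paper: the same case split on the number of non-trivial summands and on the sibling numbers of the summands, resolved by Proposition~\ref{Infinitesummand}, Lemma~\ref{Finitesummand}, Proposition~\ref{InfiniteDproperty} and Lemma~\ref{InfiniteCproperty}, and your explicit treatment of whether the siblings of a disconnected summand retain property CCGC matches what the paper's Proposition~\ref{InfiniteDproperty} is doing. The one structural step you route through the decomposition tree --- that a CCGC summand cannot fall under case (2) of Theorem~\ref{Pstructure} --- can be obtained directly: if $P_j=A+B$ with $A,B$ non-empty, then $(CG(P_j))^c$ has no edge between $A$ and $B$ and is therefore disconnected, so a linear sum of at least two non-empty posets never has property CCGC.
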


\begin{proof}
Let $P=+_{i\in I} P_i$ be a countable linear sum. 

\noindent\textbf{Case 1} Some summand $P_j$ has infinitely many siblings. Then either $P_j$ is the sum of a  labelled chain with no least element or $P_j$ is disconnected and has infinitely many siblings with property CCGC. In this case $Sib(P)=\infty$ by Proposition \ref{InfiniteDproperty}.  

\noindent\textbf{Case 2} All summands of $P$ have only one sibling. 

\noindent\textbf{Subcase 2.1} $P$ has only finitely many non-trivial summands. In this case $Sib(P)=1$ or $\aleph_0$ or $2^{\aleph_0}$ by Lemma \ref{Finitesummand}. 

\noindent\textbf{Subcase 2.2} $P$ has infinitely many non-trivial summands. In this case $Sib(P)=2^{\aleph_0}$ by Proposition \ref{Infinitesummand}. 
\end{proof}

%%%%%%%%%
% Section 6.4
%%%% Proof of Theorem 6.1
\subsection{Proof of Theorem \ref{AltThomasseNEP}}

In this short section, we prove that the alternate Thomass\'{e} conjecture holds for a countable $N$-free poset $P$, that is $P$ has one or infinitely many siblings.

\begin{proof} (of Theorem \ref{AltThomasseNEP})
Let $P$ be a countable $N$-free poset. Note that the class of countable $N$-free posets is w.q.o under embeddability and thus well-founded. Therefore, we can use induction. For each countable $N$-free poset $Q$, let $\mathcal{S}(Q)$ be the statement ``$Sib(Q)=1$ or $\infty$". The statement is true for a singleton. Let $P$ have more than one element and assume that $\mathcal{S}(Q)$ holds for each $Q$ strictly embedding in $P$. By Theorem \ref{Pstructure}, $P$ is either (1) a direct sum of at least two non-empty connected $N$-free posets, or (2) a linear sum of at least two non-empty $N$-free posets with property CCGC, or (3) the sum $\sum C$ of a  labelled chain $C=(I,\ell)$ with no least element such that each $\ell(i)$ is the pair $(P_i,r(i))$ made of an $N$-free poset $P_i$ and  $r(i)\in\{-1, 0, +1\}$ where $r$ takes 0 and $\pm 1$ densely. For case (3), by Theorem \ref{Reduced} we have $Sib(P)=2^{\aleph_0}$. Assume that $P$ satisfies (1), resp (2). Then, $P$ is disconnected, resp connected. If $P$ embeds in some of its components, resp disconnected summands, then $P$ is equimorphic to a connected, resp disconnected, $N$-free poset and $Sib(P)=\infty$ by Lemma \ref{Connecteddisconnected}. Otherwise, each component, resp summand, of $P$ strictly embeds in $P$. By induction hypothesis and Proposition \ref{Directsumsiblings}, resp Proposition \ref{Linearsumsiblings}, $\mathcal{S}(P)$ holds. 
\end{proof}

%%%%%%%%%%%%%%%%%%%%
%%% Section 7
\section{Finite Poset Substitution of Chains or Antichains}  \label{Finiteposetsub}

In this section, we prove that a countable $N$-free poset whose comparability graph has one sibling, is a finite poset substitution of chains or antichains and that it has one, countably many or else continuum many siblings. Moreover, the comparability graph of $N$-free posets of this section is a finite graph substitution of cliques or independent sets. Thus, the result of this section is a generalisation of Thomass\'e's conjecture for countable chains whose comparability graph is a clique.

The following theorem proven in \cite{HPW} describes the structure of countable cographs $G$ with only one sibling which will be used in determining the structure of a poset $P$ with $CG(P)=G$.  

% Theorem 7.1
\begin{theorem} [\cite{HPW}] \label{Lexico}
Let $G$ be a countable cograph. $G$ has only one sibling if and only if $G$ is a finite graph substitution of cliques or independent sets.
\end{theorem}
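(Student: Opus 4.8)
The plan is to run the cograph specialisation of the dichotomy behind Sections~\ref{StructureNE} and~\ref{SibNE}. Since the comparability graph of an $N$ is a $P_4$, a cograph embeds no $N$ as a comparability graph, so in the decomposition theory of Section~\ref{DT} every Gallai quotient of a robust module of a countable cograph $G$ is constant --- an independent set or a clique --- never linear and never prime; hence $G$ is coded by a decomposition tree exactly as before but densely valued by only the two symbols $0$ (independent / direct-sum type) and $1$ (clique / complete-sum type). Dualising Theorem~\ref{Pstructure}, a countable cograph with more than one vertex is a direct sum of at least two non-empty connected cographs, or a complete sum of at least two non-empty cographs with connected complements, or the sum over a labelled chain with no least element in which the two labels $0,1$ occur densely. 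In particular the structure theory and the sibling computations of Sections~\ref{StructureNE}--\ref{SibNE} transpose to this binary-valued setting. Recall finally that ``finite graph substitution of cliques or independent sets'' denotes the smallest class of graphs containing every countable clique and every countable independent set and closed under $K[H_v/v\colon v\in K]$ with $K$ a \emph{finite} clique or independent set and each $H_v$ in the class, and that $Sib(G)=Sib(G^c)$ with complementation interchanging the two tree-values, so it suffices to treat one side of each dichotomy.

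For ``$G$ a finite substitution $\Rightarrow$ $Sib(G)=1$'' I would induct on the number of substitution steps. A countable clique, resp.\ independent set, has one sibling, since any equimorphic graph is a countable clique, resp.\ independent set, of the same size. For the step write $G=K[H_v/v]$, $K$ a finite clique or independent set, with $Sib(H_v)=1$ for each $v$ by the induction hypothesis; taking complements we may assume $K$ is an independent set, so $G=H_1\oplus\cdots\oplus H_n$. A straightforward induction shows a finite substitution has only finitely many non-trivial connected components, so $G=C_1\oplus\cdots\oplus C_k\oplus T$ where $T$ is a countable independent set and each $C_j$ is a non-trivial connected finite substitution with --- by the induction hypothesis, since it is a sub-expression --- a single sibling. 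Now copy the proof of Lemma~\ref{Finitenontrivial}: a sibling $G'$ of $G$ embeds into $G$, its non-trivial components embed into the $C_j$, the induced map on indices of non-trivial components is injective (otherwise, exactly as in Lemmas~\ref{Connecteddisconnected} and~\ref{Finitenontrivial}, one obtains a non-trivial component equimorphic to a proper extension of itself, or infinitely many non-trivial components, each contradicting the single-sibling hypothesis), hence bijective as there are finitely many, each matched pair is isomorphic by induction, and the numbers of trivial components agree by mutual embedding; so $G'\cong G$. This tacitly uses that $G'$ is disconnected, which holds because a connected sibling of a graph with at least two components would give infinitely many siblings by Lemma~\ref{Connecteddisconnected}.

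For the converse I would argue by contraposition, descending in the decomposition tree. If some robust module of $G$ realises the third case --- a sum over a labelled chain with no least element and with both labels occurring densely --- then by the cograph form of Theorem~\ref{Reduced} (proved exactly as in Subsection~\ref{Linearsum} and Lemma~\ref{ReducedequimorphP}) that module has $2^{\aleph_0}$ siblings of the same shape, and equimorphy transfers up through the modular substitution by the cograph analogues of Lemmas~\ref{Infinitecomponent}, \ref{InfiniteCproperty} and~\ref{InfiniteDproperty}, so $Sib(G)=2^{\aleph_0}$. If no module of $G$ realises the third case, then repeatedly peeling the tree via the structure theorem and using the structural fact ``a cograph all of whose robust modules are direct or complete sums, with only finitely many non-trivial (resp.\ co-non-trivial) summands at each, and with no infinite branch, is a finite substitution of cliques or independent sets'' forces: either $G$ is a finite substitution --- contrary to assumption --- or some module is a direct sum with infinitely many non-trivial summands, resp.\ a complete sum with infinitely many co-non-trivial summands, whence $Sib(G)=\infty$ by the cograph version of Lemma~\ref{increasing}. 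In every case $Sib(G)>1$.

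The main obstacle is the bookkeeping in the converse: one must show that failing to be a finite substitution genuinely produces one of these local obstructions somewhere in a tree that may be both infinitely deep and infinitely branching. Concretely, one needs that an infinite branch of the decomposition tree always witnesses a limit module, hence a bona fide third-case factor (rather than an innocuous infinite refinement), and that any bounded-depth failure must come from an unbounded direct or complete sum. As throughout the present paper, the technical engine underneath is that equimorphism is inherited by the coarsest layer of a modular decomposition --- the rigidity of Lemma~\ref{Uniquesummand} for the sum cases, and the labelled-chain analysis of Subsection~\ref{Linearsum} for the third case.
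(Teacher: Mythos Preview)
The paper does not prove Theorem~\ref{Lexico}: it is quoted verbatim from \cite{HPW} and used as a black box in Section~\ref{Finiteposetsub}. There is therefore no ``paper's own proof'' against which to compare your attempt.

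That said, your sketch is a sensible attempt to port the machinery of Sections~\ref{StructureNE}--\ref{SibNE} from $NE$-free posets to cographs, and the broad shape is right: the Gallai quotients of a cograph are indeed never prime and never linear (graphs being symmetric), so the decomposition tree is $\{0,1\}$-valued and the trichotomy of Theorem~\ref{Pstructure} carries over. Your forward direction, modelled on Lemma~\ref{Finitenontrivial}, is essentially complete. For the converse you correctly identify the difficulty and are honest about it, but the ``bookkeeping'' you flag is the whole substance of the argument: you must show that a countable cograph which is \emph{not} a finite substitution of cliques or independent sets necessarily contains, at some robust module, either a chain-with-no-least-element factor or an infinite direct/complete sum with infinitely many non-trivial blocks. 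Your sentence ``an infinite branch of the decomposition tree always witnesses a limit module, hence a bona fide third-case factor'' is not obviously true as stated --- an infinite branch in a ramified meet-tree can perfectly well terminate at a leaf without passing through any limit module --- and the alternative you offer (``bounded-depth failure must come from an unbounded direct or complete sum'') is asserted rather than argued. This is exactly the structural lemma that \cite{HPW} has to supply, and it is not derivable from the results proved in the present paper; you would need to consult \cite{HPW} for the actual argument.
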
  

Let $P$ be a countable $N$-free poset such that its comparability graph $G$ has one sibling. Then, by Theorem \ref{Lexico}, $G=K[H_v/v : v\in K]$ where $K$ is a finite graph and each  $H_v$ is a clique or an independent set. We know that $G$ is a cograph, thus,
by Proposition \ref{Subcograph}, so is $K$. Throughout this section let $T:=T(K)$ be the decomposition tree  of $K$. For every $M\in T$, denote by $P(M)$ the poset obtained by restricting $P$ to the graph blocks $H_v$ of $G$ where $v\in M$. For every leaf $\{u\}$ of $T$ we write $P_u$ instead of $P(\{u\})$. We verify the following statement by induction on the vertices of $T$. For every $M\in T$, let 
\begin{center} 
     $\mathcal{S}_{fps}(M)$ : $P(M)$ is a finite poset substitution of chains or antichains. 
\end{center}
The statement $\mathcal{S}_{fps}(M)$  also provides a structure of the poset $P(M)$ which will be used in the proof of the following statement: 
\begin{center}
    $\mathcal{S}_{sib}(M)$ : $Sib(P(M))=1$ or $\aleph_0$ or $2^{\aleph_0}$.
\end{center}
Since $T$ is finite, this proves $\mathcal{S}_{fps}(M)$ and $\mathcal{S}_{sib}(M)$ for all $M\in T$. In particular, $P$ has one, countably many or continuum many siblings.

We say that a node $M$ of $T$ is \textit{edge minimal} if $M$ does not have a module $\{u, v\}$ of $K$ satisfying the following:   
\begin{enumerate}
    \item $u$ and $v$ are adjacent and both $H_u$ and $H_v$ are cliques;
    \item $u$ and $v$ are non-adjacent and both $H_u$ and $H_v$ are independent sets.  
\end{enumerate} 
We call $T$ \textit{leaf minimal} if every node of $T$ is edge minimal. It follows that if $M\in T$ is edge minimal, then for every $u, v\in M$ with property (1) or (2), $\{u,v\}$ is not a module of $K$, thus, there exists some $w\in K$ adjacent to one and only one of $u$ or $v$. Since $M$ is a module of $K$, we conclude that $w\in M$ meaning that $\{u,v\}$ is not a module of $M$. Therefore, $M$ does not have two children $\{u\}, \{v\}$ as leaves of $T$ such that $u, v$ satisfy (1) or (2). 
We say that the context graph $K$ of $G=K[H_v/v : v\in K]$ where the graph blocks are cliques or independent sets, is the \textit{smallest} when $G$ cannot be represented as $K'[H'_v/v : v\in K']$ where $K'$ is a graph,  $|K'|<|K|$ and each $H'_v$ is a clique or an independent set. By the following lemma, it turns out that the smallest possible context graph $K$ for $G$ implies that $T$ is leaf minimal.

% Lemma 7.2
\begin{lemma} \label{Kminimal} 
Let $G=K[H_v/v : v\in K]$ where $K$ is a finite cograph and each $H_v$ is either a clique or an independent set and $T$ be the decomposition tree of $K$. If $K$ is the smallest, then $T$ is leaf minimal. 
\end{lemma}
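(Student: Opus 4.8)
The natural strategy is to prove the contrapositive: if $T$ is not leaf minimal, then $K$ is not the smallest. So I would start by assuming $T$ has a node $M$ that fails to be edge minimal, i.e. $M$ contains a module $\{u,v\}$ of $K$ of one of the two forbidden types: $u\sim v$ with both $H_u,H_v$ cliques, or $u\not\sim v$ with both $H_u,H_v$ independent sets. Since $\{u,v\}$ is a module of $K$, every vertex $w\in K\setminus\{u,v\}$ is adjacent to both or to neither of $u,v$; this is exactly the condition that lets me collapse $u$ and $v$ into a single vertex without changing the substitution structure.

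\textbf{Key steps.} First I would form $K' := K \setminus \{v\}$ (equivalently, identify $v$ with $u$), so $|K'| = |K| - 1$. Then I would define the new graph blocks $H'_w$ for $w \in K'$: set $H'_w := H_w$ for $w \neq u$, and set $H'_u := H_u \cup H_v$, where I give $H'_u$ the clique structure in case (1) and the independent-set structure in case (2). The crucial verification is that $K'[H'_w/w : w\in K'] = G = K[H_v/v:v\in K]$: one checks the two clauses of Definition~\ref{Graphsub}. For edges inside $H'_u$: in case (1), $u\sim v$ in $K$ means all of $H_u$ is joined to all of $H_v$ in $G$, and together with $H_u,H_v$ being cliques this makes $H_u\cup H_v$ a clique in $G$, matching the clique structure on $H'_u$; case (2) is dual. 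For edges between $H'_u$ and $H'_w$ with $w\neq u$: since $\{u,v\}$ is a module of $K$, $w$ is adjacent to $u$ iff adjacent to $v$, so the adjacency in $G$ between $H_u\cup H_v$ and $H_w$ is uniform and is correctly recorded by the single edge relation $uw \in E(K')$. For edges between $H'_{w_1}$ and $H'_{w_2}$ with $w_1,w_2\neq u$: these are untouched. So $G$ is realised with a strictly smaller context graph, contradicting minimality of $K$; hence $T$ must be leaf minimal.

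\textbf{Anticipated obstacle.} The main thing to be careful about is not a deep difficulty but a bookkeeping point: I must confirm that $H'_u := H_u\cup H_v$ really is a clique (resp.\ independent set) in $G$, which uses \emph{both} that $H_u,H_v$ individually have that property \emph{and} that the edge/non-edge $uv$ forces the right cross-adjacency between $H_u$ and $H_v$. A second subtlety is that I should note $K'$ is still a cograph (it is an induced-substitution-type quotient of a cograph, or simply: $K'$ is obtained from $K$ by identifying twins $u,v$, and a cograph with a twin pair stays a cograph after identifying them — this can be seen directly or via Proposition~\ref{Subcograph}), so the hypotheses for "$K$ the smallest" genuinely apply to $K'$. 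Finally I should make explicit that the remaining blocks $H'_w$ ($w\neq u$) are unchanged and hence still cliques or independent sets, so the new representation is of the required form. With these checks the contradiction is complete.
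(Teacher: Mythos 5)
Your proposal is correct and follows essentially the same argument as the paper: contract the offending two-element module $\{u,v\}$ into a single vertex whose block is $H_u\cup H_v$ (a clique or independent set by the case hypothesis and the $uv$-adjacency), verify via the module property that the substitution still yields $G$, and contradict the minimality of $|K|$. The only difference is cosmetic — the paper introduces a fresh vertex $a$ rather than identifying $v$ with $u$ — and your extra check that $K'$ is a cograph is harmless but unnecessary, since the definition of ``smallest'' only requires $K'$ to be a graph.
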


\begin{proof} 
Suppose $T$ is not leaf minimal. Then some vertex $M\in T$ is not edge minimal. Let $u, v\in M$ form a module such that either they are adjacent and both $H_u, H_v$ are cliques or they do not form an edge and both $H_u, H_v$ are independent sets. In either case let $H_a$ be the graph obtained by restricting $G$ to the graph blocks $H_u$ and $H_v$. We know that in the first, resp second, case, $H_a$ is a clique, resp an independent set. Also, since $N:=\{u, v\}$ is a module, for $w\in K\setminus N$,  we have $uw\in E(K)$ if and only if $vw\in E(K)$. Let $K'$ be a graph whose vertex set is $V':=(K\setminus N)\cup \{a\}$ and its edge set is obtained by replacing all $wu, wv\in E(K)$ with $wa$ where $w\in K\setminus N$. Let $x\in G$. Then $x\in H_w$ for some $w\in K$. If $w\in K\setminus N$, then $w\in V'$ and $x\in H_w$; if $w=u$ or $w=v$, then $x\in H_a$ and $a\in V'$. Now, let $xy\in E(G)$. Then either $x, y\in H_w$, $w\in K$, and $xy\in E(H_w)$; or $x\in H_w$, $y\in H_{w'}$, $w, w'\in K$ and $ww'\in E(K)$. In the first case, if $w\in K\setminus N$, then $w\in V'$ and $xy\in E(H_w)$; and if $w=u$ ($w=v$), then $H_u$ ($H_v$) is a clique and we have $xy\in E(H_a)$. In the second case when $w, w'\in K\setminus N$, then $w, w'\in V'$ and $ww'\in E(K')$; if $w\in K\setminus N$ and $w'\in N$, then $wa\in E(K')$; and finally if $w,w'\in N$, then $x, y \in H_a$ and $uv\in E(K)$ which means that $xy\in E(H_a)$.  Hence, $G=K'[H'_v/v : v\in K']$. Since $K'$ is a graph, $|K'|<|K|$ and each $H_w$, $w\in K'$, is a clique or independent set, we get a contradiction because $K$ is the smallest context graph of $G$.     
\end{proof}

We say that a node $M\in T$ is \textit{gs-connected}, resp \textit{gs-disconnected}, if  $G(M)=M[H_v/v : v\in M]$ is connected, resp disconnected. Note that if $v(M)=1$, then each pair $\{u, v\}$ of elements of $M$ forms an edge meaning that $M$ is gs-connected. Similarly, if $v(M)=0$, then $M$ contains two non-adjacent elements $u, v$ meaning that $M$ is gs-disconnected.  So, when $M$ is gs-disconnected, then $P(M)$ has at least two components. 
We will use the edge minimality of the nodes of $T$ to argue by induction on those vertices.

% Lemma 7.3
\begin{lemma} \label{Disconnectedmodule}
Let $M\in T$ be such that $v(M)=1$ and $N$ a gs-disconnected child of $M$.
\begin{enumerate}
    \item If for some $y\in P(M\setminus N)$ and some $x\in P(N)$ we have $x <_P y$, resp $y <_P x$, then $P(N) <_P \{y\}$, resp $\{y\} <_P P(N)$. 
    \item If $N'$ is another gs-disconnected child of $M$, then we have $P(N) <_P P(N')$ or $P(N') <_P P(N)$. 
\end{enumerate}
\end{lemma}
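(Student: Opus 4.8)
The plan is to prove part~(1) directly and then deduce part~(2) from it purely formally. Recall that $CG(P)=K[H_v/v:v\in K]$, so that two elements of $P$ are comparable in $P$ exactly when the blocks $H_v$ containing them are adjacent (or equal) in $K$. The single structural fact I would use throughout is a consequence of $v(M)=1$: the children of $M$ are pairwise completely joined in $K$, i.e.\ every vertex lying in one child of $M$ is adjacent in $K$ to every vertex lying in a different child. Hence, if $y\in P(M\setminus N)$, say $y\in H_w$, then $w$ lies in a child of $M$ distinct from $N$, so $w$ is adjacent in $K$ to every vertex of $N$; therefore $y$ is comparable in $P$ to every element of $P(N)$, and each $x''\in P(N)$ satisfies $x''<_P y$ or $y<_P x''$.

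For part~(1), suppose $x<_P y$ with $x\in P(N)$ and $y\in P(M\setminus N)$, and let $x''\in P(N)$ be arbitrary; I must exclude $y<_P x''$. Assume $y<_P x''$. Then $x<_P y<_P x''$, so $x<_P x''$, and since $P(N)$ is an induced subposet of $P$ this places $x$ and $x''$ in a common component $D$ of $P(N)$. As $N$ is gs-disconnected, $P(N)$ has a component $D'\neq D$; pick $z\in D'$, which by the previous paragraph is comparable to $y$. If $z<_P y$ then $z<_P y<_P x''$ gives $z<_P x''$, contradicting $z\perp x''$ (they lie in different components of $P(N)$); if $y<_P z$ then $x<_P y<_P z$ gives $x<_P z$, contradicting $z\perp x$. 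Hence $x''<_P y$, and since $x''$ was arbitrary, $P(N)<_P\{y\}$. The ``resp'' case follows by running the identical argument in $P^{*}$, noting that $CG(P^{*})=CG(P)$, so that $T$, the node $M$, the value $v(M)$, and the gs-disconnectedness of $N$ are all unaffected by passing to $P^{*}$.

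For part~(2), let $N'$ be another gs-disconnected child of $M$. Fix $x\in P(N)$ and $x'\in P(N')$; since $N\neq N'$ are distinct children of $M$, the structural fact gives that they are comparable, say $x<_P x'$ (the case $x'<_P x$ is symmetric and yields $P(N')<_P P(N)$). Applying part~(1) to the child $N$ with $x'$ in the role of $y$ --- legitimate because $x'\in P(N')\subseteq P(M\setminus N)$ --- gives $a<_P x'$ for every $a\in P(N)$. Now fix any such $a$: we have $a<_P x'$ with $a\in P(M\setminus N')$ and $x'\in P(N')$, so the ``resp'' half of part~(1) applied to the gs-disconnected child $N'$ gives $\{a\}<_P P(N')$, that is, $a<_P b$ for all $b\in P(N')$; as $a$ was arbitrary, $P(N)<_P P(N')$. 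I expect the only delicate point to be the structural fact at the very start: one must correctly extract from $v(M)=1$ that the complete join holds \emph{between} distinct children (and need not hold inside a child), and then push this through the substitution $CG(P)=K[H_v/v:v\in K]$ to obtain comparability of $y$ with all of $P(N)$. Once that is in hand the contradiction in part~(1) is immediate --- it does not even invoke $NE$-freeness --- and part~(2) is a formal consequence of part~(1).
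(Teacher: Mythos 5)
Your proof is correct and follows essentially the same route as the paper: part~(1) rests on the same two facts --- that $v(M)=1$ forces every element of $P(M\setminus N)$ to be comparable to every element of $P(N)$ (since distinct children of $M$ are completely joined), and that a witness $z$ in a second component of $P(N)$ yields the contradiction --- and part~(2) is then derived from part~(1). The only (harmless) difference is that you obtain (2) purely formally by applying (1) twice, once to $N$ and once to $N'$, whereas the paper applies (1) once and then repeats the component-hopping argument inside $P(N)$; both are valid.
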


\begin{proof}
(1) 
Suppose some $y\in P(M\setminus N)$ and some $x\in P(N)$ satisfy $x <_P y$. Since $N$ is gs-disconnected, there is some $z\in P(N)$ such that $x$ and $z$ belong to different components of $P(N)$ that is $x \perp z$. Also, $z\in H_u$ and $y\in H_v$ for some $u, v\in M$. Since $v(M)=1$,  $u$ and $v$ are adjacent. Thus, $zy\in E(G(M))$. In other words, $z$ and $y$ are comparable w.r.t $\leq_P$. We have $z <_P y$ because otherwise we get $x <_P y <_P z$ which yields that $x$ and $z$ are comparable, a contradiction. It follows that for every element $z\in P(N)$ in a component other than the component to which $x$ belongs, we have $z <_P y$. By a similar argument, for every element  $x\in P(N)$ in a component other than the component to which $z$ belongs, we have $x <_P y$. Hence, $P(N) <_P \{y\}$ in this case. Similarly, if there is some $x\in P(N)$ and $y\in P(M\setminus N)$ satisfying  $y <_P x$, then $\{y\} <_P P(N)$. 

(2) Now let $N'$ be another gs-disconnected child of $M$. Since $v(M)=1$, every element of $P(N)$ is comparable to every element of $P(N')$. Take some $x\in P(N)$ and suppose that for some $y\in P(N')$ we have $x <_P y$. Since $N'$ is gs-disconnected, (1) implies that $\{x\} <_P P(N')$. Since $N$ is gs-disconnected, take some $z\in P(N)$ such that $x$ and $z$ belong to different components of $P(N)$. We have $z\perp x$. Therefore, $\{z\} <_P P(N')$ because otherwise we get $x <_P y <_P z$, a contradiction. So, for every $z\in P(N)$ in a component of $P(N)$ other than the component to which $x$ belongs, we have $\{z\} <_P P(N')$. Similarly, for every $x$ in a component of $P(N)$ other than the component $z$ belongs to, we have $\{x\} <_P P(N')$. It follows that $P(N) <_P P(N')$. The argument for the other case is similar.    
\end{proof}

With the above lemma at our disposal, we can prove $\mathcal{S}_{fps}(K)$. The proof is by induction on the vertices of $T$.

% Proposition 7.4
\begin{proposition} \label{Finiteps} 
Let $P$ be a countable $N$-free poset whose comparability graph is a finite graph substitution of cliques or independent sets. Then,  $P$ is a finite poset substitution of chains or antichains. 
\end{proposition}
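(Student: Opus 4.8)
The plan is to proceed by induction on the nodes of the decomposition tree $T = T(K)$, processing them from the leaves down toward the root, and proving $\mathcal{S}_{fps}(M)$ for every $M \in T$; since $\mathcal{S}_{fps}(K)$ is the statement $\mathcal{S}_{fps}(M)$ applied to the root, this yields the proposition. We may freely assume $K$ is the smallest context graph, so by Lemma \ref{Kminimal} the tree $T$ is leaf minimal, i.e.\ every node is edge minimal. The base case is a leaf $\{u\}$: here $P(\{u\}) = P_u$ is $P$ restricted to a single graph block $H_u$, which is a clique or an independent set, hence $P_u$ is a poset whose comparability graph is a complete graph or an edgeless graph. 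Because $P$ is $NE$-free, so is $P_u$ (induced subposet), and an $NE$-free poset whose comparability graph is complete is a chain while one whose comparability graph is edgeless is an antichain; either way $P_u$ is trivially a finite poset substitution of a chain or antichain.

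For the inductive step, fix an internal node $M \in T$ with children $N_1, \dots, N_k$ (if $M$ has infinitely many children, note these correspond to the components/summands of the Gallai quotient of a strong module in the cograph decomposition, and the argument below still applies uniformly), and assume $\mathcal{S}_{fps}(N_s)$ holds for each $s$. Split on the valuation $v(M) \in \{0,1\}$. If $v(M) = 0$, then in $G(M) = M[H_v/v : v \in M]$ any two distinct children-blocks are non-adjacent, so the $P(N_s)$ are pairwise incomparable in $P$; thus $P(M) = \bigoplus_s P(N_s)$ is a direct sum, which is a poset substitution of the antichain indexed by $\{1,\dots,k\}$ into the $P(N_s)$, each of which is (by induction) a finite poset substitution of chains or antichains — and composing poset substitutions gives a poset substitution, with edge minimality of $M$ ensuring we do not need to collapse two antichain-children into one (keeping the context poset genuinely built from chains and antichains rather than forcing a merge; this only matters for the ``smallest'' bookkeeping and is not strictly needed for $\mathcal{S}_{fps}$ itself).

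If $v(M) = 1$, the real work happens. Here every pair of distinct children-blocks is adjacent in $G(M)$, so every element of $P(N_s)$ is comparable to every element of $P(N_t)$ for $s \neq t$. The key structural input is Lemma \ref{Disconnectedmodule}: for any gs-disconnected child $N$, all of $P(N)$ sits entirely below or entirely above any fixed outside element comparable to it, and any two gs-disconnected children are fully comparable as blocks. I would first handle the gs-disconnected children: by part (2) of the lemma they are linearly ordered among themselves as blocks, and by part (1) each behaves, relative to everything outside, like a single ``indivisible'' comparable lump. Then I would argue that the gs-connected children each carry an order on $P(N_s)$ that already makes $P(N_s)$ comparable to everything, so that $P(M)$ is obtained as a linear sum: the context is a chain whose elements are the children of $M$ (with the gs-disconnected ones placed according to Lemma \ref{Disconnectedmodule}(2), and the gs-connected ones interleaved as dictated by $P$), and we substitute $P(N_s)$ — itself a finite poset substitution of chains or antichains by induction — at each spot; again composition of substitutions closes the loop. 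Edge minimality is used to guarantee the context chain here is not artificially refinable and to rule out two clique-children needing to be merged.

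The main obstacle I expect is verifying, in the $v(M)=1$ case, that the order relations \emph{between} a gs-connected child block $P(N_s)$ and the other children are consistent enough to realize $P(M)$ as a genuine linear sum — i.e.\ that no gs-connected child ``splits'' across the linear order the way a gs-disconnected one would, forcing a more complicated context. This is exactly why property CCGC / connectedness of the relevant blocks and Lemma \ref{Disconnectedmodule} are invoked: one must show that for a gs-connected $N_s$, every element of $P(N_s)$ relates the same way (all-above or all-below) to each other child, which follows because within $G(N_s)$ one can walk from any vertex to any other through a path witnessing that comparability to an external element propagates. Carefully assembling the context chain from the gs-disconnected ``lumps,'' the well-ordered or reverse-well-ordered runs they form, and the gs-connected children slotted between them — and checking this is a chain, with the substituted blocks reproducing $P(M)$ exactly — is the delicate bookkeeping, but no new idea beyond Lemma \ref{Disconnectedmodule} and the induction hypothesis is needed.
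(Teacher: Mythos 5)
Your overall architecture is the paper's: assume $K$ is smallest so that $T$ is leaf minimal (Lemma \ref{Kminimal}), induct on the nodes of $T$, handle leaves as chains/antichains, and split the inductive step on $v(M)\in\{0,1\}$, with the $v(M)=0$ case being a direct sum. The $v(M)=0$ case and the treatment of the gs-disconnected children in the $v(M)=1$ case (linearly ordered as blocks via Lemma \ref{Disconnectedmodule}(2), each an indivisible lump relative to outside elements via Lemma \ref{Disconnectedmodule}(1)) are correct and match the paper.

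However, there is a genuine error in your handling of the gs-connected children when $v(M)=1$. You assert that for a gs-connected child $N_s$ ``every element of $P(N_s)$ relates the same way (all-above or all-below) to each other child,'' justified by walking along a path in $G(N_s)$. That propagation argument is valid only for paths in the \emph{complement} of the comparability graph (an incomparability $x\perp z$ forces an external comparable $y$ to sit on the same side of $x$ and $z$, which is exactly how Lemma \ref{Disconnectedmodule} exploits gs-disconnectedness); a comparability path gives no such constraint. Concretely, the unique gs-connected child in this case is a clique leaf $\{u\}$ with $P_u$ a chain, and elements of that chain genuinely do lie on different sides of the various gs-disconnected blocks: the correct conclusion is the opposite of what you claim, namely that $P_u$ is \emph{split} into intervals $P_u^1,\dots,P_u^k$ sitting in the gaps between the linearly ordered blocks $P(M_1)<_P\cdots<_P P(M_{k-1})$, so that $P(M)=P_u^1+P(M_1)+\cdots+P(M_{k-1})+P_u^k$. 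You also miss the structural fact, forced by edge minimality together with the density of the valuation, that there is \emph{at most one} gs-connected child and it must be a clique leaf; every other child is gs-disconnected. With that fact and with Lemma \ref{Disconnectedmodule}(1) applied to place each disconnected block inside a gap of the single chain $P_u$, the linear-sum representation follows; without it, your picture of several gs-connected children each behaving as an unsplit lump does not describe $P(M)$ correctly.
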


\begin{proof}
Suppose $G:=CG(P)$ is of the form $K[H_v/v : v\in K]$ where $|K|<\infty$ and each $H_v$ is a clique or independent set. We know that $K$ is a cograph. Assume that $K$ is the smallest. Let $T$ be the decomposition tree of $K$. By Lemma \ref{Kminimal}, $T$ is leaf minimal. For every $M\in T$ let $\mathcal{S}_{fps}(M)$ be the statement ``$P(M)$ is a finite poset substitution of chains or antichains". 

It is easy to verify that a poset whose comparability graph is a clique, resp an independent set, is a chain, resp an antichain. 
Therefore, the statement is true for the leaves of $T$. Let $M\in T$ be non-trivial and $M_1, \ldots, M_k$ the children of $M$. Also, assume that $\mathcal{S}_{fps}(M_i)$ holds for every $1\leq i\leq k$. We consider the following cases. 

\noindent\textbf{Case 1} $v(M)=0$. In this case $M$ is gs-disconnected. Let $x\in P(M)$. Then there is some $u\in M$ such that $x\in H_u$.  Since the $M_i$ are the children of $M$, $x\in P(M_i)$ for some $1\leq i\leq k$. Moreover, if $x\in P(M_i)$ and $y\in P(M_j)$, $i\neq j$, then $x$ and $y$ are disconnected because $v(M)=0$. Thus, $P(M)=\bigoplus_i P(M_i)$. That is $P(M)$ is the substitution of the $P(M_i)$ for a finite antichain. Since $\mathcal{S}_{fps}(M_i)$ holds for every $i$, $\mathcal{S}_{fps}(M)$ is true in this case.

\noindent\textbf{Case 2} $v(M)=1$.   
Since $M$ is edge minimal,  at most one of its children is a leaf $\{u\}$ such that $H_u$ is a clique, equivalently $P_u$ is a chain. If there is a leaf $u$ such that $P_u$ is a chain, we may suppose without loss of generality that $M_k=\{u\}$. Let $i, j\neq k$ be given with $i\neq j$. Then both $M_i$ and $M_j$ are gs-disconnected. By Lemma \ref{Disconnectedmodule}, we have $P(M_i) <_P P(M_j)$ or $P(M_j) <_P P(M_i)$.  Without loss of generality assume that $P(M_i) <_P P(M_j)$ if $i < j < k$. So, $P(\bigcup_{i<k}M_i)=+_{i<k}P(M_i)$. Again, by Lemma \ref{Disconnectedmodule}, for every $i < k$,  $P(M_i)$ is in a gap of $P_u$. Set $P_u^1:=\{x\in P_u : \{x\} <_P P(M_1)\}$, for every $1 < i < k$ set
$$P_u^i:=\{x\in P_u : P(M_{i-1}) <_P \{x\} <_P P(M_i)\}$$
and also set $P_u^k:=\{x\in P_u : P(M_{k-1}) <_P \{x\} \}$.  
Then, we have
$$P(M)=P_u^1 + P(M_1) + P_u^2 + P(M_2) + \cdots + P(M_{k-1}) + P_u^k,$$
where each $P_u^i$ is a chain. That is $P(M)$ is an alternate substitution of the $P(M_i)$ and the $P_u^i$ for a finite chain. 
Since $\mathcal{S}_{fps}(M_i)$ holds for every $i$ and since each $P_u^i$ is a chain, $\mathcal{S}_{fps}(M)$ is true in this case.  

Since the statement is true for every $M\in T$, $\mathcal{S}_{fps}(K)$ is true which means that $P$ is a finite poset substitution of chains or antichains. 
\end{proof}

We observed in Proposition \ref{Finiteps} that for every $M\in T$, $P(M)$ is a linear or direct sum of posets. Determining the structure of the siblings of $P(M)$ will enable us to obtain its sibling number. We show that a sibling of $P(M)$ has (up to isomorphism) the same context poset of $P(M)$ (a chain or antichain) replaced with siblings of poset blocks of $P(M)$. The following lemma shows that every embedding of $P(M)$ induces an injective mapping on the poset blocks of $P(M)$. Note that for each $u\in M$, $P_u$ is a chain or an antichain. When we say $P_u$ and $P_v$, $u, v\in M$, have the same {\em type}, it means that either both $P_u$ and $P_v$ are chains or both are antichains.

% Lemma 7.5
\begin{lemma} \label{Incomparableelement} 
Let $M\in T$ be edge minimal. The images of two distinct poset blocks of $P(M)$ under any embedding of $P(M)$ do not intersect the same poset block. 
\end{lemma}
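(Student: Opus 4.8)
\noindent\textit{Plan of the proof.} The argument naturally splits according to whether $v(M)=0$ or $v(M)=1$: by Proposition~\ref{Finiteps} the former means $P(M)$ is a direct sum over its poset blocks and the latter that $P(M)$ is a linear sum over them, and the two cases are mirror images of each other, with the comparability graph in the first case playing the role that its complement plays in the second. The first thing to do is to record what edge minimality buys (together with $K$ being the smallest context graph, which makes $T$ leaf minimal by Lemma~\ref{Kminimal}): among the poset blocks of $P(M)$ \emph{at most one} is \emph{degenerate}, meaning an antichain when $v(M)=0$ and a chain when $v(M)=1$, while every other block is a non-trivial poset which is connected when $v(M)=0$ and has property CCGC when $v(M)=1$. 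Indeed, two blocks of the same degenerate kind, or a singleton together with a non-trivial degenerate block of that kind, could be fused into a single clique or independent set, lowering $|K|$; this is the exact point at which edge minimality and the minimality of $K$ enter.

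\noindent The second ingredient is that any embedding $f$ of $P(M)$ carries each non-degenerate block $R$ into a single poset block of $P(M)$. When $v(M)=0$ this is because $f$ preserves connectedness of comparability graphs, so $f(R)$ lies in a single connected component of $P(M)$, and since $R$ is non-trivial that component cannot be one of the singletons coming from the degenerate block; when $v(M)=1$ one repeats verbatim the first paragraph of the proof of Lemma~\ref{Uniquesummand}, using that $CG(R)^{c}$ is connected and that the summands of a linear sum are pairwise comparable. In either case the block receiving the image of a non-degenerate block is again non-degenerate, so one obtains a map $\hat f$ on the set of non-degenerate blocks of $P(M)$ satisfying $R\hookrightarrow \hat f(R)$ for each such $R$.

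\noindent Now assume toward a contradiction that two distinct blocks have images meeting a common block. Using the second ingredient (and the trivial fact that in a direct, resp.\ linear, sum a cross pair is incomparable, resp.\ comparable, hence has incomparable, resp.\ comparable, image), one reduces, at least when the two offending blocks are non-degenerate, to having distinct non-degenerate blocks $R_a\neq R_b$ with $f(R_a)\cup f(R_b)\subseteq R_c$ for a single block $R_c$, so that $R_a\oplus R_b\hookrightarrow R_c$ (resp.\ $R_a+R_b\hookrightarrow R_c$). Since $\hat f$ cannot take any non-degenerate block strictly back below itself in the embeddability order, and there are only finitely many blocks, replacing $f$ by a suitable power we may assume $\hat f(R_c)=R_c$; applying $f$ repeatedly still sends $R_a$ and $R_b$ into $R_c$, whence $R_a\oplus R_b\hookrightarrow R_c$ persists. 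Descending through the \emph{finite} decomposition tree $T$: $R_c$ is neither a chain nor a singleton (it contains an induced $R_a\oplus R_b$, which has both a comparable and an incomparable pair), so $R_c$ corresponds to a node of value $1$, is a linear sum, and the CCGC poset $R_a\oplus R_b$ lands inside one of its summands, which in turn corresponds to a strictly lower node and is a direct sum whose components absorb $R_a$ and $R_b$, and so on; iterating strictly decreases the $T$-node involved, so the descent must terminate, and tracking the copies of $R_a$ and $R_b$ through it forces an equimorphism $R_a\approx R_a\oplus R_b$ (or its linear analogue). But $CG(R_a)=G(M_a)$ is a finite graph substitution of cliques and independent sets, hence has exactly one sibling by Theorem~\ref{Lexico}, while it is connected and $CG(R_a\oplus R_b)=CG(R_a)\sqcup CG(R_b)$ is not; two equimorphic graphs one of which has a unique sibling are isomorphic, so this is impossible.

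\noindent The step I expect to be the main obstacle is the bookkeeping around the degenerate block and the precise way the descent in $T$ bottoms out. A chain, resp.\ antichain, block need not have its image contained in a single block, so the mixed case (a collision between the scattered image of the degenerate block and the image of a non-degenerate block) must be handled separately, and one must also make sure that the degenerate blocks encountered at deeper levels of the descent do not reintroduce the same ambiguity. I would control this by observing that in a linear, resp.\ direct, sum every comparable, resp.\ incomparable, pair forces the two images into a \emph{common} summand, which pins down how the pieces of a chain, resp.\ antichain, block can be distributed, and then feeding this back into the descent, which stays finite because $T$ is finite; I would also organise the whole proof as an induction along $T$ from the leaves upward, so that a collision discovered inside a strictly lower block $R_e=P(M_e)$ is disposed of by the inductive hypothesis applied to $M_e$.
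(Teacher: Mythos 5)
Your proposal diverges substantially from the paper's argument, and it has a genuine gap at its very foundation: you have misread what the ``poset blocks of $P(M)$'' are. They are the posets $P_u$ for $u\in M$, i.e.\ the restrictions of $P$ to the graph blocks $H_u$ of $G=K[H_v/v:v\in K]$ --- each of them a chain or an antichain --- and not the children $P(M_i)$ of $M$ in the decomposition tree $T$, which is what your dichotomy ``direct sum over its poset blocks / linear sum over its poset blocks'' and your descent through $T$ implicitly assume. Because of this, your first key claim (``among the poset blocks of $P(M)$ at most one is degenerate'') is false. For instance, take $G=(H_a\oplus H_b)+(H_c\oplus H_d)$ with all four $H$'s cliques: the root $M$ has $v(M)=1$ and is edge minimal (the only two-element modules of $K$ are $\{a,b\}$ and $\{c,d\}$, whose vertices are \emph{non-adjacent} while their blocks are \emph{cliques}, so neither forbidden configuration occurs), yet all four blocks $P_a,P_b,P_c,P_d$ are chains, i.e.\ ``degenerate'' in your sense. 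Edge minimality only forbids a two-element module $\{u,v\}$ whose adjacency \emph{matches} the type of its two blocks (adjacent with two cliques, or non-adjacent with two independent sets); it does not limit the total number of chains or antichains among the $P_u$. The two further weak points you yourself flag --- the mixed case where a chain/antichain block has a scattered image, and the step ``tracking the copies of $R_a$ and $R_b$ forces $R_a\approx R_a\oplus R_b$'' --- remain unresolved plans rather than proofs; in particular the descent through $T$ can simply terminate with $R_a$ and $R_b$ landing in different components of some deeper direct sum, producing no equimorphism and no contradiction.

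The paper's proof is much more local and avoids all of this. Given distinct $u,v\in M$ and $x\in P_u$, $y\in P_v$, one exhibits a single element $z$ comparable to exactly one of $x,y$: if the pair $(u,v)$ together with the types of $P_u,P_v$ is of the forbidden kind, edge minimality supplies a vertex $w\in M$ adjacent to exactly one of $u,v$ and any $z\in P_w$ works; in every other configuration one of $P_u,P_v$ must be non-trivial and of a type ``opposite'' to the adjacency of $u,v$, and an element $z$ of that block works. Since each block $P_w$ is a module of $CG(P(M))$ and is itself a clique or an independent set, no element whatsoever is comparable to exactly one of two elements lying in a common block; as $h$ preserves comparability, $h(z)$ separates $h(x)$ from $h(y)$, so $h(x)$ and $h(y)$ cannot lie in the same block. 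You may want to note that this lemma is proved \emph{before} (and is used to obtain) the permutation statements for children of $M$ (Lemmas \ref{Distinctobjects}--\ref{Bijectionondisconnected}); your proposal essentially tries to prove those later lemmas first, without the counting tool ($|M_i|+|M_j|\le|M_k|$) that Lemma \ref{Distinctobjects} provides and that the paper's order of argument makes available.
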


\begin{proof}
Let $h$ be an embedding of $P(M)$. It suffices to show that for two distinct $u, v\in M$ and $x\in P_u, y\in P_v$, there exists some $z\in P(M)$ other than $x, y$ such that $z$ is comparable to one and only one of $x$ or $y$. Then $h(z)$ is comparable to one and only one of $h(x)$ or $h(y)$ meaning that $h(P_u)$ and $h(P_v)$ cannot intersect the same poset block because $P(M)$ is a poset substitution by Proposition \ref{Finiteps}.  

Note that a singleton as a poset block of $P(M)$ is both a chain and an antichain. Let two distinct $u, v\in M$ and $x\in P_u, y\in P_v$ be given. We consider the following cases:

\noindent\textbf{Case 1} $u, v$ are adjacent.  If both $P_u, P_v$ are chains, then the edge minimality of $M$ implies that $\{u, v\}$ is not a module of $M$. So, there is some $w\in M$ such that $w$ is adjacent to one of $u$ or $v$ and not to the other. Then picking some $z\in P_w$ proves the statement. If both $P_u, P_v$ are antichains, then one of them is non-trivial since otherwise they can be regarded as chains. Assume, without loss of generality, that $P_u$ is non-trivial. Then there is some $z\in P_u$ such that $z\perp x$ but $z$ and $y$ are comparable. Now suppose, without loss of generality, that $P_u$ is an antichain and $P_v$ is a chain. $P_u$ must be non-trivial since otherwise it can be regarded as a chain. Again, take some $z\in P_u$ with $z\perp x$. We know that $z$ and $y$ are comparable. 

\noindent\textbf{Case 2} $u, v$ are non-adjacent. If both $P_u, P_v$ are antichains, then the edge minimality of $M$ implies that $\{u,v\}$ is not a module of $M$. So, there is some $w\in M$ such that $w$ is adjacent to one of $u$ or $v$ but not to the other. Then pick some $z\in P_w$ which is a witness to the statement. Note that this also proves the case both $P_u, P_v$ are singletons. So, assume that both $P_u, P_v$ are chains and one of them, say without loss of generality $P_v$, is non-trivial. Take some $y\neq z\in P_v$ comparable to $y$. However, we have $z\perp x$. Finally, if $P_u$ and $P_v$ are of different types, then both of them are non-trivial. Therefore, there exists an element $z$ comparable to one and only one of $x$ or $y$. 
\end{proof}

By the lemma above one can define a graph isomorphism on $M$. 

% Lemma 7.6
\begin{lemma} \label{Distinctobjects}
Let $M\in T$ be edge minimal. Each embedding of $P(M)$ induces a graph isomorphism of $M$. Moreover, the embedding preserves the types of the poset blocks of $P(M)$. 
\end{lemma}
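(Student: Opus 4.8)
The plan is to fix an arbitrary embedding $h$ of $P(M)$, use Proposition \ref{Finiteps} to present $P(M)$ as a finite poset substitution of chains and antichains with poset blocks $B_1,\dots,B_N$, and to show that $h$ permutes these blocks: each $h(B_i)$ lies inside a single $B_j$, the induced map $\hat h$ is a bijection of $\{1,\dots,N\}$, and $\hat h$ preserves both the comparability relation between blocks and the chain/antichain type of each block. Reading the block graph (two blocks joined by an edge iff they are comparable in $P(M)$) as the graph structure carried by $M$, this is exactly the assertion of the lemma.

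The heart of the argument is a counting step. For each $i$ set $S_i=\{\,j:\ h(B_i)\cap B_j\neq\emptyset\,\}$; since $B_i\neq\emptyset$ we have $S_i\neq\emptyset$. I claim the sets $S_1,\dots,S_N$ are pairwise disjoint. When $B_i\subseteq P_u$ and $B_{i'}\subseteq P_{u'}$ for distinct $u,u'\in M$, this is immediate from Lemma \ref{Incomparableelement}: $h(P_u)$ and $h(P_{u'})$ meet no common poset block, so neither do $h(B_i)\subseteq h(P_u)$ and $h(B_{i'})\subseteq h(P_{u'})$. The remaining case is that $B_i$ and $B_{i'}$ are two of the ``gap'' pieces $P_u^{a},P_u^{b}$ ($a<b$) into which a single chain leaf $P_u$ is cut in Case 2 of the proof of Proposition \ref{Finiteps}. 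Here I would use that, because $M$ is edge minimal, a disconnected summand $P(M_t)$ ($a\le t<b$) sits strictly between $P_u^{a}$ and $P_u^{b}$ in the linear order: choosing $z,z'$ in two different components of $P(M_t)$, for any $x\in P_u^{a}$ and $x'\in P_u^{b}$ we get $x<_P z,z'<_P x'$ with $z\perp z'$, hence $h(x)<_P h(z),h(z')<_P h(x')$ with $h(z)\perp h(z')$. Since a poset block of the substitution is convex in $P(M)$, any block containing both $h(x)$ and $h(x')$ would also contain $h(z)$ and $h(z')$; but such a block would contain a comparable pair $h(x),h(x')$ (so it is a chain) and an incomparable pair $h(z),h(z')$ (so it is not), a contradiction. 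Thus $h(x)$ and $h(x')$ always lie in different blocks, so $S_i\cap S_{i'}=\emptyset$. Now $S_1,\dots,S_N$ are $N$ pairwise disjoint non-empty subsets of an $N$-element set, hence each is a singleton $\{\hat h(i)\}$ and $\hat h$ is a bijection; in particular $h(B_i)\subseteq B_{\hat h(i)}$.

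With this in hand the rest is routine. For adjacency: two distinct blocks $B_i,B_j$ are either comparable (one lies wholly below the other in $P(M)$) or incomparable (every cross pair is incomparable), and this dichotomy is precisely the edge/non-edge relation of the block graph; since $h$ preserves comparability and incomparability and $\hat h(i)\neq\hat h(j)$, the blocks $B_{\hat h(i)},B_{\hat h(j)}$ are comparable exactly when $B_i,B_j$ are, so $\hat h$ is a graph isomorphism, i.e. $h$ induces a graph isomorphism of $M$. For types: a poset block is a chain of size $\ge 2$ iff it contains a comparable pair, an antichain of size $\ge 2$ iff it contains an incomparable pair, and a singleton otherwise — three mutually exclusive alternatives; applying $h$ to a comparable (resp.\ incomparable) pair inside $B_i$ produces such a pair inside $B_{\hat h(i)}$, so non-singleton chains map to chains and non-singleton antichains to antichains, while singletons need no argument since they are of both types.

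I expect the main obstacle to be Step~1, specifically the gap-piece case: Lemma \ref{Incomparableelement} is stated only for the blocks $P_u$ with $u\in M$ and does not by itself separate two gap pieces of a single chain leaf, so one must add the convexity argument above, which is exactly where edge minimality of $M$ is needed (it forces the intervening summands $P(M_t)$ to be genuinely disconnected). Once all $N$ footprints $S_i$ are shown pairwise disjoint, finiteness of the block set makes the counting — and hence the whole lemma — go through at once.
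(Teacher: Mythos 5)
Your core mechanism --- pairwise-disjoint footprints plus a pigeonhole count over a finite index set --- is exactly the engine of the paper's own proof, and your Step~1 is sound as far as it goes (including the convexity argument separating two gap pieces of the same chain). The problem is the granularity at which you run it. Your blocks $B_1,\dots,B_N$ are the modules of the substitution produced by Proposition \ref{Finiteps}, and this family strictly refines the family $\{P_u\}_{u\in M}$ that the lemma is actually about: a chain leaf $P_u$ gets cut into $k\ge 2$ gap pieces $P_u^1,\dots,P_u^k$, so in general $N>|M|$ and your block graph is not the graph $M$. Consequently your conclusion --- a permutation of the $N$ refined blocks preserving comparability and type --- is not ``exactly the assertion of the lemma.'' To obtain a graph isomorphism of $M$ you must show $h(P_u)\subseteq P_{u'}$ for a \emph{single} $u'\in M$, i.e.\ that all gap pieces of one chain $P_u$ land inside gap pieces of one and the same chain $P_{u'}$; your Step~1 only shows they land in pairwise distinct refined blocks, and nothing in the argument rules out their being scattered among gap pieces of several different chains (there may be several chain leaves at different nodes of $T$). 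The closing sentence ``Reading the block graph \dots as the graph structure carried by $M$'' is where this gap is hidden, and it matters downstream: Lemmas \ref{Bijectiononconnected} and \ref{Bijectionondisconnected} use the induced map on $M$ together with cardinality comparisons of the form $\lambda_i+\lambda_j\le\lambda_k$ for $\lambda_i=|M_i|$, which need the coarse statement $h(P_u)\subseteq P_{\hat h(u)}$.

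The repair is to run your own footprint argument one level up, which is essentially what the paper does. For $u\in M$ set $T_u=\{w\in M : h(P_u)\cap P_w\neq\emptyset\}$. Each $T_u$ is non-empty, and Lemma \ref{Incomparableelement} --- which is stated precisely for the blocks $P_u$, $u\in M$ --- makes the $T_u$ pairwise disjoint; so $|M|$ disjoint non-empty subsets of the $|M|$-element set $M$ are all singletons, yielding $h(P_u)\subseteq P_{\hat h(u)}$ with $\hat h$ a bijection of $M$. Adjacency is then preserved because $h$ preserves comparability, and types are preserved because a non-trivial chain does not embed in an antichain and vice versa. With this coarse version in place, the refined decomposition and the gap-piece case of your Step~1 are no longer needed.
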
  

\begin{proof} 
Let $h$ be an embedding of $P(M)$. First note that $M$ is finite. Therefore, there is a linear order $\leq_M$ on $M$. Define the mapping $\hat{h}:M\to M$ as follows: for each $u\in M$, select the least $v\in M$ w.r.t $\leq_M$ such that $h(P_u)\cap P_v\neq \emptyset$ and define $\hat{h}(u)=v$. If $h(P_u)\cap P_v\neq\emptyset$ and $h(P_u)\cap P_w\neq\emptyset$, then $v \leq_M w$ and $w \leq_M v$ which implies $v=w$. That is $\hat{h}$ is well-defined. For $u\neq v\in M$, by Lemma \ref{Incomparableelement}, $h(P_u)$ and $h(P_v)$ do not intersect the same poset block. Therefore, the mapping $\hat{h}$ is injective and since $M$ is finite $\hat{h}$ is onto. So, $\hat{h}$ is a bijection on $M$. Further, $u, v\in M$ are adjacent if and only if $\hat{h}(u)$ and $\hat{h}(v)$ are adjacent because $h$ preserves the order $\leq_P$. Thus, $\hat{h}$ is a graph isomorphism of $M$. 

Since $\hat{h}$ is well-defined, it is not the case that for some $u\in M$, $h(P_u)\cap P_v\neq \emptyset$ and $h(P_u)\cap P_w\neq\emptyset$ where $v\neq w$. It follows that $h(P_u)\subseteq P_{\hat{h}(u)}$ for every $u\in M$. The inclusion also implies that each poset block of $P(M)$ of any type is mapped into a poset block of $P(M)$ of the same type because a non-trivial chain cannot embed in an antichain and a non-trivial antichain cannot embed in a chain. 
\end{proof}

One more fact resulting from the lemma above is that any embedding of $P(M)$, $M\in T$, sends a singleton poset block of $P(M)$ to a singleton poset block because otherwise we get infinitely many chains or antichains as poset blocks of $P(M)$.

% Lemma 7.7
\begin{lemma} \label{Bijectiononconnected}
Let $M\in T$ be edge minimal such that $v(M)=0$. Every embedding of $P(M)$ permutes the gs-connected children of $M$ and fixes the possible gs-disconnected child of $M$.  Hence, a sibling of $P(M)$ is a direct sum of the siblings of the $P(M_i)$ where $M_i$ is a child of $M$.  
\end{lemma}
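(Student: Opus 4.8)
The plan is to reduce the statement to the graph automorphism of $M$ furnished by Lemma~\ref{Distinctobjects}, and then to control which children of $M$ it can move. First I would fix an embedding $h$ of $P(M)$. Since $M$ is edge minimal, Lemma~\ref{Distinctobjects} yields a graph automorphism $\hat h$ of $M$ with $h(P_u)\subseteq P_{\hat h(u)}$ for every $u\in M$, and $\hat h$ preserves the type (chain or antichain) of each poset block. Because $K$ is finite, the decomposition tree has no limit modules, so the children $M_1,\dots,M_k$ of $M$ in $T$ are exactly the maximal proper strong submodules (the components) of $M$, and they partition $M$. Any graph automorphism of $M$ sends modules to modules and strong modules to strong modules, hence $\hat h$ permutes the $M_i$: write $\hat h(M_i)=M_{\sigma(i)}$ for a permutation $\sigma$ of $\{1,\dots,k\}$. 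Then $\hat h$ restricts to a graph isomorphism $M_i\to M_{\sigma(i)}$, and consequently $h(P(M_i))\subseteq P(M_{\sigma(i)})$.

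Next I would pin down the gs-disconnected children. Since $T$ is densely valued and $v(M)=0$, every non-leaf child $M_i$ satisfies $v(M_i)=1$ and is therefore gs-connected; so a child of $M$ can be gs-disconnected only if it is a leaf $\{u\}$ with $H_u$ an independent set of size at least $2$, i.e. with $P_u$ an antichain of size at least $2$. I would then use edge minimality to see there is at most one such child: if $\{u\}$ and $\{v\}$ were two of them, then $u$ and $v$ are non-adjacent in $M$ (distinct components of the value-$0$ node $M$ are mutually non-adjacent), $\{u,v\}$ is a module of $M$ and hence of $K$, and $H_u,H_v$ are both independent sets, contradicting clause~(2) of edge minimality. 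Now if $\{u\}$ is the gs-disconnected child, then $h$ is injective on $P_u$, so $|P_{\hat h(u)}|\ge |P_u|\ge 2$, and by type preservation $P_{\hat h(u)}$ is an antichain; thus $\{\hat h(u)\}$ is again a gs-disconnected child, and uniqueness forces $\hat h(u)=u$. Hence $\sigma$ fixes the gs-disconnected child whenever it exists, and therefore permutes the remaining children — all gs-connected — among themselves. This is the first assertion.

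For the ``hence'', let $P'$ be a sibling of $P(M)$; replacing $P'$ by an isomorphic copy, I may assume $P'\subseteq P(M)$ and fix an embedding $\phi\colon P(M)\to P'$. Recall from the proof of Proposition~\ref{Finiteps} that $P(M)=\bigoplus_i P(M_i)$ since $v(M)=0$; hence $P'=\bigoplus_i Q_i$ with $Q_i:=P'\cap P(M_i)$. Applying the first part to the self-embedding of $P(M)$ obtained by composing $\phi$ with the inclusion $P'\hookrightarrow P(M)$ gives a permutation $\sigma$ with $\phi(P(M_i))\subseteq P(M_{\sigma(i)})$; since also $\phi(P(M_i))\subseteq P'$, we get $P(M_i)\cong\phi(P(M_i))\subseteq Q_{\sigma(i)}\subseteq P(M_{\sigma(i)})$. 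As $\sigma$ has finite order, the posets $P(M_{\sigma^{t}(i)})$, $t\ge 0$, embed cyclically into one another and are therefore pairwise equimorphic; sandwiching $Q_{\sigma(i)}$ between $P(M_i)$ and $P(M_{\sigma(i)})$ yields $Q_{\sigma(i)}\approx P(M_{\sigma(i)})$. Since $\sigma$ is a bijection, $Q_j\approx P(M_j)$ for every $j$, so $P'=\bigoplus_j Q_j$ is a direct sum of siblings of the $P(M_j)$.

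The hard part is the middle step: one must be sure that the graph automorphism of $M$ produced by Lemma~\ref{Distinctobjects} genuinely respects the modular decomposition (so that it permutes children, not merely vertices) and that it cannot displace a gs-disconnected child — which is exactly where edge minimality enters, via the uniqueness argument forcing that child to be a fixed point. The only other non-routine point is the finite-order sandwich in the last paragraph; the rest is bookkeeping about direct sums.
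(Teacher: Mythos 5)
Your proof is correct and reaches both conclusions of the lemma, but it takes a genuinely different route at the key step. The paper defines the induced map only on the set of gs-connected children, using connectedness of each $P(M_i)$ to place its image inside a single $P(M_j)$, and then proves injectivity by a counting argument: if two distinct children were sent into the same child $M_k$, Lemma~\ref{Distinctobjects} would give $\lambda_i+\lambda_j\le\lambda_k$ for the underlying vertex counts, and iterating produces infinitely many children of $M$, a contradiction. You instead invoke Lemma~\ref{Distinctobjects} at full strength from the outset: the induced graph automorphism $\hat h$ of $M$ is already a bijection on vertices, it preserves modules and strong modules of $K[M]$ (equivalently, since $v(M)=0$, it permutes the connected components of $K[M]$, which are exactly the children), so the permutation of children comes for free; the only remaining work is to show the gs-disconnected child is a fixed point, which you obtain from its uniqueness --- a clean consequence of edge minimality that the paper leaves implicit in the phrase ``the possible gs-disconnected child'' --- together with type preservation of poset blocks. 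Your route buys a shorter argument and makes explicit two facts the paper uses silently, namely that density of the valuation forces every non-leaf child of a value-$0$ node to be gs-connected, and that edge minimality forbids two non-trivial independent-set leaves below $M$; the paper's counting argument stays closer to the posets themselves but ultimately rests on the same Lemma~\ref{Distinctobjects}. The concluding orbit/sandwich argument for equimorphy of the blocks is essentially the same in both proofs.
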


\begin{proof}
Let $h$ be an embedding of $P(M)$. 
Let $X:=\{M_1, \ldots, M_n\}$ be the set of gs-connected children of $M$ and $\{u\}$ the possible gs-disconnected child of $M$. Note that if the gs-disconnected child $\{u\}$ of $M$ exists, then $P_u$ is non-trivial and also all $P(M_i)$ are non-trivial because otherwise the edge minimality of $M$ fails. Let $M_i\in X$ be given and suppose that for some $x\in P(M_i)$, $h(x)\in P(M_j)$. For every $y\in P(M_i)$, $y$ is connected to $x$, so $h(y)$ is connected to $h(x)$ because $h$ preserves the path connecting $x$ and $y$. Therefore,  $h(P(M_i))\subseteq P(M_j)$ and since $P(M_i)$ is non-trivial, $h(P(M_i))\cap P_u=\emptyset$. Since $X$ is finite, there is a linear order $\leq_X$ on $X$. For every $M_i\in X$, select the least $M_j\in X$ w.r.t $\leq_X$ such that $h(P(M_i))\cap P(M_j)\neq\emptyset$ and define $\hat{h}(M_i)=M_j$. 
Since $M$ is gs-disconnected, it is not the case that for some $i$ and some $j\neq k$, $h(P(M_i))\cap P(M_j)\neq \emptyset$ and $h(P(M_i))\cap P(M_k)\neq\emptyset$. So, $\hat{h}$ is well-defined. Since $T$ is leaf minimal, the children of $M$ are edge minimal. Hence, each $P(M_i)$ is a finite poset substitution of chains or antichains. Let $\lambda_i$ be the cardinal of $M_i$ for each $i$. We have $\lambda_i > 0$ for every $i$. Now, suppose for two $i\neq j$, $h(P(M_i)), h(P(M_j))\subseteq P(M_k)$. By Lemma \ref{Distinctobjects}, we have $\lambda_i + \lambda_j\leq \lambda_k$.  This means that $\lambda_k > \lambda_i, \lambda_j$. A contradiction is immediate when $k=i$ or $k=j$ because then $\lambda_k > \lambda_k$. Assume that $k\neq i, j$. Since $\lambda_k > \lambda_i, \lambda_j$, $P(M_k)$ cannot be embedded into  $P(M_i)$ or $P(M_j)$. If $P(M_k)$ embeds in $P(M_k)$, then by Lemma \ref{Distinctobjects}, $\lambda_i+\lambda_j+\lambda_k \leq \lambda_k$, a contradiction to $\lambda_i, \lambda_j\neq 0$.  It follows that $P(M_k)$ embeds into some $P(M_l)$ where $l\neq i, j, k$. Continuing this, we get infinitely many gs-connected children of $M$, a contradiction. Thus, $\hat{h}$ is injective and since $X$ is finite, $\hat{h}$ is a bijection on $X$. It also follows that $h(P_u)\subseteq P_u$. Define $\hat{h}(\{u\})=\{u\}$.

Since $X$ is finite, for every $i$, there is some integer $m_i > 0$ such that $\hat{h}^{m_i}(M_i)=M_i$. This means that for every $i$, $P(M_i)\approx P(\hat{h}.i(M_i))$ where $\hat{h}.i$ is the orbit of $M_i$ under $\hat{h}$. Hence, a sibling of $P(M)$ is of the form $\bigoplus_iQ(M_i)\oplus P'_u$ where $Q(M_i)\approx P(M_i)$ for every $i$ and $P'_u\cong P_u$ because $P_u$ has only one sibling.  
\end{proof} 

For case $v(M)=1$ where $M\in T$ is edge minimal, we provide a result similar to Lemma \ref{Bijectiononconnected}.  

% Lemma 7.8
\begin{lemma} \label{Bijectionondisconnected}
Let $M\in T$ be edge minimal such that $v(M)=1$ and $h$ an embedding of $P(M)$. 
\begin{enumerate}
    \item For every gs-disconnected child $M_i$ of $M$ we have $h(P(M_i))\subseteq P(M_i)$.
    \item For any chain $P_u^i$ in the representation $P_u^1 + P(M_1) + P_u^2 + P(M_2) + \cdots + P(M_{k-1}) + P_u^k$ of $P(M)$ where $\{u\}$ is the unique possible gs-connected child of $M$, $h(P_u^i)\subseteq P_u^i$. 
\end{enumerate}
Hence, a sibling of $P(M)$ is of the form 
$$Q_u^1 + Q(M_1) + Q_u^2 + Q(M_2) + \cdots + Q(M_{k-1}) + Q_u^k $$
where $Q(M_i)\approx P(M_i)$ and $Q_u^i\approx P_u^i$ for every $i$. 
\end{lemma}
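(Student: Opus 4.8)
The plan is to exploit the structural representation $P(M) = P_u^1 + P(M_1) + P_u^2 + \cdots + P(M_{k-1}) + P_u^k$ established in Proposition \ref{Finiteps} together with Lemma \ref{Uniquesummand} applied to the linear-sum structure of $P(M)$. First I would invoke the fact, proved in the excerpt for $v(M)=1$, that $P(M)$ is a linear sum whose summands (the $P(M_i)$ and the chains $P_u^i$) all have property CCGC: the $P(M_i)$ are gs-disconnected hence disconnected, so their comparability graphs have connected complements, and a chain $P_u^i$ trivially has this property unless it is a single point, which is fine. Then Lemma \ref{Uniquesummand} tells us that the embedding $h$ of $P(M)$ induces an order-preserving map $\hat h$ on the index chain of this linear sum. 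Since $K$ (and hence $M$) is finite, the linear order indexing the summands is finite, so an order-preserving self-map that is not injective would force an infinite descending or stabilizing orbit inside a finite chain; pushing one summand strictly into a later summand of the same ``kind'' repeatedly is impossible. The delicate point is that $\hat h$ might not be immediately injective on the raw index chain because $h$ could in principle collapse a chain-summand $P_u^i$ together with an adjacent $P(M_j)$ into a single summand of $P(M)$.

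To handle that, I would split the argument into the two flavors of summand. For the gs-disconnected children $M_i$: by Lemma \ref{Incomparableelement} and Lemma \ref{Distinctobjects} (applied to the edge-minimal node $M_i$, which is edge minimal because $T$ is leaf minimal), $h$ restricted to $P(M_i)$ respects the block structure, and $P(M_i)$ is disconnected with at least two components. An embedding of $P(M)$ sends connected pieces to connected pieces, so each component of $P(M_i)$ lands inside a single summand of $P(M)$; since distinct components of $P(M_i)$ are pairwise incomparable while any two distinct summands of $P(M)$ are comparable, all components of $P(M_i)$ must land in the \emph{same} summand of $P(M)$, forcing $h(P(M_i))$ into one summand. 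A counting argument on cardinalities — exactly as in Lemma \ref{Bijectiononconnected}, using that $M$ is finite so there are only finitely many gs-disconnected children, each of positive size — then shows $\hat h$ is injective on the set of gs-disconnected children, hence (finiteness) a bijection; and an order-preserving bijection of a finite chain is the identity, so $h(P(M_i)) \subseteq P(M_i)$ for every such $i$. This is part (1).

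For part (2): once we know the gs-disconnected blocks are fixed by $\hat h$, the complementary ``gaps'' $P_u^i$ are forced into place as well. Concretely, $P_u^i$ is exactly the set of elements of $P(M)$ strictly between $P(M_{i-1})$ and $P(M_i)$ (with the obvious interpretation at the ends), and since $h$ fixes each $P(M_j)$ setwise and is order-preserving, it must map this interval into the corresponding interval; because $\{u\}$ is the unique gs-connected child, $P_u^i$ consists of elements all lying in the single block $P_u$, and $h(P_u) \subseteq P_u$ already follows because $P_u$ is the only leaf of its type among the children of $M$ (any other target would produce infinitely many chain/antichain blocks by the remark after Lemma \ref{Distinctobjects}). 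Hence $h(P_u^i) \subseteq P_u^i$. Finally, the ``sibling'' conclusion: given a sibling $P'$ of $P(M)$ realized by mutually inverse embeddings, apply the above to both directions to get that each $P(M_i)$ is equimorphic to the corresponding block of $P'$ and each chain $P_u^i$ is equimorphic to the corresponding chain of $P'$; since the $\hat h$ orbits on a finite chain are trivial, $P'$ reassembles as $Q_u^1 + Q(M_1) + \cdots + Q_u^k$ with $Q(M_i)\approx P(M_i)$ and $Q_u^i \approx P_u^i$. The main obstacle, as noted, is ruling out the collapse of a $P_u^i$ with a neighboring $P(M_j)$ into one summand of $P(M)$, and this is precisely where the incomparability of distinct components of the disconnected $P(M_j)$ (versus the comparability of distinct summands of a linear sum) does the work.
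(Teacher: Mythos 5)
Your part (1) is essentially the paper's argument (incomparable elements of distinct components of $P(M_i)$ must land in a common non-chain summand, then a cardinality count on the blocks forces injectivity of the induced map on the finitely many gs-disconnected children), and your part (2) reaches the right conclusion by a genuinely different and arguably cleaner route: instead of the paper's element-by-element analysis of where $x\in P_u^i$ can go (which has to dispose of the awkward case $h(x)\in P(M_i)$ via the component-permutation of Lemma \ref{Bijectiononconnected}), you first pin down $h(P_u)\subseteq P_u$ and then let the order structure do the rest. However, two points need repair. First, your opening claim that the summands of $P_u^1+P(M_1)+\cdots+P_u^k$ all have property CCGC is false: the comparability graph of a chain with at least two elements is a clique, whose complement is edgeless and hence \emph{disconnected}, so a non-trivial gap $P_u^i$ does \emph{not} have property CCGC (you have this exactly backwards --- it is the singleton that trivially satisfies CCGC). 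Consequently Lemma \ref{Uniquesummand} cannot be invoked for this decomposition; its proof genuinely needs connectedness of the complement to confine a summand's image to one summand. Fortunately your detailed argument does not actually use that invocation --- you explicitly route around the ``collapse'' problem --- so this is a misleading framing rather than a fatal error, but as written it asserts a false statement.

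Second, your justification of $h(P_u)\subseteq P_u$ is incomplete. Lemma \ref{Distinctobjects} gives a type-preserving graph automorphism $\hat h$ of $M$ with $h(P_w)\subseteq P_{\hat h(w)}$ for every \emph{vertex} $w\in M$, not merely for the children of $M$; so a priori $\hat h(u)$ could be a vertex $w$ lying inside some gs-disconnected child $M_i$ whose block $P_w$ happens to be a non-trivial chain --- being ``the only leaf of its type among the children of $M$'' does not exclude this, and the remark about singleton blocks is irrelevant when $P_u$ is infinite. The gap is easily closed, but you must close it: part (1) gives $h(P(M_i))\subseteq P(M_i)$, hence $\hat h(M_i)\subseteq M_i$ for every $i$; since $\hat h$ is a bijection of the finite set $M=\{u\}\cup\bigcup_i M_i$, each $M_i$ is fixed setwise and therefore $\hat h(u)=u$, giving $h(P_u)\subseteq P_u$. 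With that in hand, your interval argument (if $x\in P_u^i$ had $h(x)\in P_u^j$ with $j\neq i$, some $P(M_l)$ separating them would have its image on the wrong side of $h(x)$) correctly yields $h(P_u^i)\subseteq P_u^i$, and the sibling conclusion follows as you and the paper both sketch it.
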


\begin{proof}
By Proposition \ref{Finiteps}, $P(M)$ is of the form 
$$P_u^1 + P(M_1) + P_u^2 + P(M_2) + \cdots + P(M_{k-1}) + P_u^k\ $$
where each $M_i$ is a gs-disconnected child of $M$ and the $P_u^i$ are the intervals of the unique chain $P_u$ where $\{u\}$ is the possible gs-connected child of $M$.   

(1) Let $i$ and $x\in P(M_i)$ be given. Since $M_i$ is gs-disconnected, take some $y$ in a component of $P(M_i)$ other than the component to which $x$ belongs. So, $x\perp y$. We have $h(x)\perp h(y)$. This implies that $h(x)\notin P_u^j$ where $1\leq j \leq k$. Suppose that $h(x)\in P(M_j)$ where $i\neq j$. Without loss of generality assume that $P(M_i) <_P P(M_j)$. Since $h(x)\perp h(y)$, we have $h(y)\in P(M_j)$. So, for every $y\in P(M_i)$ in a component other than the component to which $x$ belongs, we have $h(y)\in P(M_j)$. Exchanging the role of $x$ and $y$, for every $x\in P(M_i)$ in a component other than the component to which $y$ belongs, we have $h(x)\in P(M_j)$. It means that $h(P(M_i))\subseteq P(M_j)$. Let $\lambda_i, \lambda_j$ be the number of elements of $M_i, M_j$, respectively. By Lemma \ref{Distinctobjects} we have $\lambda_i \leq \lambda_j$. Thus, it is not the case that $P(M_j)$ embeds in $P(M_j)$ by $h$ because then we get $\lambda_i+\lambda_j \leq \lambda_j$, a contradiction to $\lambda_i > 0$. So, $h(P(M_j))\subseteq P(M_l)$ where $j < l$. Continuing this, we get infinitely many gs-disconnected children of $M$, a contradiction.  

(2) Let $i$ and some $x\in P_u^i$ be given. Suppose $h(x)\in P_u^j$ or $h(x)\in P(M_j)$ where $j\neq i$. Assume that $i < j$. The case $j<i$ is similar. Pick some gs-disconnected $M_l$ where $i \leq l < j$. Since $M_l$ is gs-disconnected, $P(M_l)\neq\emptyset$. Pick some $y\in P(M_l)$. We have $x <_P y$. By (1), $h(y)\in P(M_l)$. It follows that $h(y) <_P h(x)$, equivalently, $y <_P x$, a contradiction. 
Now assume that $h(x)\in P(M_i)$ (the argument for $P(M_{i-1})$ is similar). Since $M_i$ is gs-disconnected, take some $y\in P(M_i)$ such that $y$ belongs to a component of $P(M_i)$ other than the component to which $h(x)$ belongs. We have $x <_P y$ implying that $h(x) <_P  h(y)$. Also, $h(x) <_P h^2(x)$. Moreover, by (1), $h_{\upharpoonright P(M_i)}$ is an embedding from $P(M_i)$ into $P(M_i)$. This means that $h(x)$ and $y$ are mapped into the same component of $P(M_i)$. This contradicts Lemma \ref{Bijectiononconnected} because $v(M_i)=0$. 
\end{proof}

Now we are ready to deduce Thomass\'{e}'s Conjecture for a countable $N$-free poset whose comparability graph has one sibling. Bear in mind that by Theorem \ref{Dichchain}, a countable chain has either 1, $\aleph_0$ or $2^{\aleph_0}$ siblings.

% Theorem 7.9
\begin{theorem} \label{GtoP} 
Let $P$ be a countable $N$-free poset whose comparability graph has one sibling. Then $Sib(P)=1$ or $\aleph_0$ or $2^{\aleph_0}$.  
\end{theorem}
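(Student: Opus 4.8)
The plan is to fix a representation $CG(P)=K[H_v/v:v\in K]$ with $K$ a finite cograph, each $H_v$ a clique or an independent set, and $|K|$ least possible; such a $K$ exists by Theorem \ref{Lexico}, and by Lemma \ref{Kminimal} the decomposition tree $T:=T(K)$ is then leaf minimal, so every node of $T$ is edge minimal and Lemmas \ref{Bijectiononconnected} and \ref{Bijectionondisconnected} apply at every node. Since $T$ is finite, I would prove $\mathcal{S}_{sib}(M)$ for all $M\in T$ by induction ``from the leaves upward''; because $P=P(K)$, the root case $M=K$ yields the theorem. The base case is immediate: for a leaf $\{u\}$ the poset $P_u$ is a chain or an antichain, an antichain has one sibling, and a countable chain has $1$, $\aleph_0$ or $2^{\aleph_0}$ siblings by Theorem \ref{Dichchain}. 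For the inductive step at a non-trivial edge-minimal node $M$ with children $M_1,\dots,M_k$, I would invoke $\mathcal{S}_{fps}(M)$ (Proposition \ref{Finiteps}) for the shape of $P(M)$ and Lemmas \ref{Bijectiononconnected}, \ref{Bijectionondisconnected} for the shape of its siblings, referring to the posets $P(M_i)$ together with the chains $P_u$, $P_u^i$ occurring there as the \emph{blocks} of $P(M)$. There are finitely many blocks, and by the induction hypothesis and Theorem \ref{Dichchain} every block $B$ satisfies $Sib(B)\in\{1,\aleph_0,2^{\aleph_0}\}$.

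When $v(M)=0$, $P(M)=\bigoplus_i P(M_i)\oplus P_u$ is a finite direct sum of connected posets $P(M_i)$ (and possibly an antichain $P_u$, whose only sibling is itself), and by Lemma \ref{Bijectiononconnected} the siblings of $P(M)$ are, up to isomorphism, exactly the sums $\bigoplus_i Q(M_i)\oplus P_u$ with $Q(M_i)\approx P(M_i)$; hence $Sib(P(M))\le\prod_i Sib(P(M_i))$, a finite product of cardinals from $\{1,\aleph_0,2^{\aleph_0}\}$. For the matching lower bound I would argue in three subcases: if every $Sib(P(M_i))=1$ then $P(M)$ has only finitely many non-trivial components, each with one sibling, so $Sib(P(M))=1$ by Lemma \ref{Finitenontrivial}; if some component has infinitely many siblings then, since $P(M)$ has at least two non-empty components, $Sib(P(M))=\infty$ by Lemma \ref{Infinitecomponent}, which with the product bound forces $\aleph_0$ provided no block reaches $2^{\aleph_0}$; and if $Sib(P(M_{i_0}))=2^{\aleph_0}$ I would build $2^{\aleph_0}$ pairwise non-isomorphic siblings by replacing every summand equimorphic to $P(M_{i_0})$ by the members of a suitable $2^{\aleph_0}$-family of siblings of $P(M_{i_0})$ and separating the outcomes by their canonical multiset of connected components, in the style of the proofs of Lemmas \ref{Infinitecomponent} and \ref{InfiniteCproperty}.

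When $v(M)=1$, $P(M)=P_u^1+P(M_1)+P_u^2+\dots+P(M_{k-1})+P_u^k$ is a finite linear sum, and Lemma \ref{Bijectionondisconnected} tells us that every embedding of $P(M)$ sends each block into itself; consequently the siblings of $P(M)$ are, up to isomorphism, precisely the posets obtained by replacing each block independently with an arbitrary sibling of it, so $Sib(P(M))$ equals the product, over the finitely many blocks $B$, of $Sib(B)$. As $\{1,\aleph_0,2^{\aleph_0}\}$ is closed under finite products, this gives $\mathcal{S}_{sib}(M)$ and completes the induction; taking $M=K$ proves $Sib(P)\in\{1,\aleph_0,2^{\aleph_0}\}$.

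The step I expect to require the most care is the bookkeeping behind the lower bounds when $v(M)=0$, specifically certifying the exact value $2^{\aleph_0}$ rather than merely ``infinitely many'', since an embedding of a direct sum may permute pairwise-equimorphic summands. Here I would exploit that the only connected component of $P(M)$ that can occur with infinite multiplicity is a singleton: after replacing all summands equimorphic to a block $B$ with $Sib(B)=2^{\aleph_0}$ by one common sibling $Q_\alpha$, I can cancel all non-singleton components to recover the non-singleton components of $Q_\alpha$ from those of the modified poset, and since a $2^{\aleph_0}$-family of siblings of $B$ cannot all agree on their non-singleton components, passing to a suitable subfamily yields $2^{\aleph_0}$ pairwise non-isomorphic siblings of $P(M)$. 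A similar but easier canonicity argument underlies the $v(M)=1$ claim that distinct block replacements give non-isomorphic siblings.
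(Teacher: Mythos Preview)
Your approach is essentially the same as the paper's: fix the smallest $K$, use leaf-minimality of $T$ to apply Lemmas~\ref{Bijectiononconnected} and~\ref{Bijectionondisconnected} at every node, and run an induction from the leaves of $T$ up to the root $K$. The paper's inductive step is in fact terser than yours---it simply asserts that $Sib(P(M))$ equals the maximum of the sibling numbers of the blocks---whereas you supply the upper-bound-as-product and the case analysis (Lemmas~\ref{Finitenontrivial}, \ref{Infinitecomponent}, and the $2^{\aleph_0}$ construction) for the lower bound. Your extra detail is correct and fills a gap the paper leaves implicit.

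Two small points worth tightening. First, in the $v(M)=1$ case, the exact equality $Sib(P(M))=\prod Sib(B)$ requires that two block-by-block replacements are isomorphic only if the corresponding blocks are; Lemma~\ref{Bijectionondisconnected} is stated for embeddings of $P(M)$ itself, not of its siblings, so you need either to observe that its proof goes through verbatim for any sibling (each $Q(M_i)$ is still disconnected by the one-level-down application of Lemma~\ref{Bijectiononconnected}), or to invoke Lemma~\ref{Uniquesummand} directly on the siblings viewed as linear sums of CCGC summands. Second, in the $v(M)=0$, $2^{\aleph_0}$ subcase, your multiset-cancellation argument works cleanly once you note that the multiset of components of $Q_\alpha^{\text{total}}$ is the fixed multiset $\{P(M_i):i\notin J\}\uplus P_u$ plus $|J|$ copies of the component multiset of $R_\alpha$, and multiset subtraction followed by division by the finite $|J|$ recovers the latter (multiplicities $\aleph_0$ cause no trouble since $|J|\cdot\aleph_0=\aleph_0$); the ``cannot all agree on non-singleton components'' step is then unnecessary.
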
  

\begin{proof}
Set $G:=CG(P)$. By Theorem \ref{Lexico}, $G$ is of the form $K[H_v/v : v\in K]$ where $K$ is finite and each $H_v$ is a clique or an independent set. Indeed, $K$ is a cograph. Assume that $K$ is the smallest. Let $T$ be the decomposition tree of $K$. By Lemma \ref{Kminimal},  $T$ is leaf minimal. For each $M\in T$ let $\mathcal{S}_{sib}(M)$ be the statement: ``$Sib(P(M))=1$  or $\aleph_0$ or $2^{\aleph_0}$".   We prove that $\mathcal{S}_{sib}(K)$ holds.

For every leaf $\{v\}$ of $T$, $P_v$ is a countable chain or antichain. Therefore, $\mathcal{S}_{sib}(\{v\})$ holds since an antichain has only one sibling and the statement holds for a countable chain by Theorem \ref{Dichchain}. Now, take $M\in T$ such that $M$ is not a leaf of $T$ and suppose that for each child $N$ of $M$, $\mathcal{S}(N)$ holds. If $v(M)=0$, then by Lemma \ref{Bijectiononconnected},  each sibling of $P(M)$ is a finite direct sum of siblings of the $P(N)$ where $N$ is a child of $M$; and if $v(M)=1$, then by Lemma \ref{Bijectionondisconnected}, each sibling of $P(M)$ is an alternate substitution of chains and the siblings of the $P(N)$ for a finite chain where $N$ is a non-trivial child of $M$. By the induction hypothesis and the fact that the statement holds for a countable chain,  $Sib(P(M))$ is the maximum of the sibling numbers of the components or the summands of $P(M)$. Thus, $\mathcal{S}_{sib}(M)$ holds. Since the statement is true for every $M\in T$,  $\mathcal{S}_{sib}(K)$ holds.   
\end{proof}

%%%%%%%%%%%
% Open Cases
%Section 8
\section{Open Cases} 

We just proved the alternate Thomass\'{e} conjecture for countable $N$-free posets. We saw how the siblings of the sum of a labelled chain with no least element were constructed by its decomposition tree. The same technique was used for countable linear sums with infinitely many non-trivial summands. Also, there were other cases in both direct sums and linear sums in which we determined the exact sibling number. Moreover, Section \ref{Finiteposetsub} asserts that Thomass\'e's conjecture holds for countable $N$-free posets whose comparability graph has one sibling. Posing all these restrictions, we may ask the following.

% Problem 8.1
\begin{problem}
Let P be a countable direct sum of connected $N$-free posets or a countable linear sum of $N$-free posets with property CCGC such that the comparability graph of $P$ has infinitely many siblings. Is it true that $Sib(P)=1$ or $\aleph_0$ or $2^{\aleph_0}$?
\end{problem}

%%%%%%%%%
\vspace{0.5cm}
\noindent{\bf \large Acknowledgements}

I would like to thank my PhD supervisors Professor Robert Woodrow and Professor Claude Laflamme for suggesting this problem and for their help and advice.

%
% ---- Bibliography ----
%

\vspace{1cm}

\textsc{Department of Mathematics and Statistics, University of Calgary, Calgary, Alberta, Canada, T2N 1N4}

{\em Email address:} \texttt{davoud.abdikalow@ucalgary.ca}

\end{document}